\documentclass[11pt,letterpaper]{amsart}

\usepackage{silence}
\usepackage{subfiles}
\usepackage[T1]{fontenc}
\usepackage[utf8]{inputenc}
\usepackage[dvipsnames]{xcolor}
\usepackage[yyyymmdd,hhmmss]{datetime}
\usepackage[numbers,sort,compress]{natbib}
\usepackage{amstext,amsthm,amssymb}
\usepackage{xparse,mathrsfs,mathtools}
\usepackage[british]{babel}
\usepackage[babel=true,english=british]{csquotes}
\usepackage{xr-hyper}
\usepackage{stmaryrd}
\usepackage{enumitem}
\usepackage{bbm, dsfont}
\usepackage[pdftex,unicode=true,%
pdfusetitle,pdfdisplaydoctitle=true,%
pdfpagemode=UseOutlines,%
bookmarks=true,bookmarksnumbered=false,bookmarksopen=true,bookmarksopenlevel=2,%
breaklinks=true,%
pdfencoding=unicode,psdextra,
pdfcreator={},
pdfborder={0 0 0},
colorlinks=false
]{hyperref}
\usepackage{lmodern,microtype}
\usepackage{geometry}
\usepackage[title]{appendix}
\usepackage[capitalise,nameinlink]{cleveref}
\usepackage[cal=zapfc,bb=ams,frak=esstix,scr=boondoxo]{mathalfa}

\hypersetup{
	colorlinks=false,
	pdfborder={0 0 0},
	pdfborderstyle={/S/U/W 0},
}

\newcommand*{\doi}[1]{\href{http://dx.doi.org/#1}{doi: {\texttt{\footnotesize\color{teal}#1}}}}

\newtheorem*{thm*}{Theorem}
\newtheorem{thm}{Theorem}[section]
\newtheorem{lem}[thm]{Lemma}

\newtheorem{prop}[thm]{Proposition}

\newtheorem{Def}[thm]{Definition}

\theoremstyle{remark}

\newtheorem*{rem*}{Remark}
\theoremstyle{plain}
\newcommand{\thistheoremname}{}
\newtheorem*{genericthm*}{\thistheoremname}
\newenvironment{namedthm*}[1]
{\renewcommand{\thistheoremname}{#1}%
	\begin{genericthm*}}
	{\end{genericthm*}}
\numberwithin{equation}{section}
\crefformat{equation}{#2(#1)#3}
\crefformat{enumi}{#2(#1)#3}
\crefname{section}{\textsection}{\textsection\textsection}
\DeclarePairedDelimiter\parentheses{\lparen}{\rparen}
\DeclarePairedDelimiter\braces{\lbrace}{\rbrace}
\DeclarePairedDelimiter\brackets{\lbrack}{\rbrack}
\DeclarePairedDelimiter\abs{\lvert}{\rvert}
\DeclarePairedDelimiter\norm{\lVert}{\rVert}

\NewDocumentCommand\set{ s o m o }{%
	\IfBooleanTF{#1}{\IfNoValueTF{#4}{\braces*{#3}}{\braces*{\,#3:#4\,}}}{%
		\IfNoValueTF{#2}{\IfNoValueTF{#4}{\braces{#3}}{\braces{\,#3:#4\,}}}{%
			\IfNoValueTF{#4}{\braces[#2]{#3}}{\braces[#2]{\,#3:#4\,}}}}%
}
\NewDocumentCommand\e{ s O{} m }{%
	\IfBooleanTF{#1}{%
		\operatorname{e}_{#2}\parentheses*{#3}%
	}{\operatorname{e}_{#2}\parentheses{#3}}%
}

\DeclareSymbolFont{symbolsC}{U}{txsyc}{m}{n}
\DeclareMathSymbol{\notniFromTxfonts}{\mathrel}{symbolsC}{61}
\makeatother

\title[Joint extreme values]{Joint extreme values of $L$-functions on and off the critical line}

\subjclass[2020]{% See http://www.ams.org/msc/msc2020.html
	11M06, 11F66}

\keywords{$L$-functions, zero-density estimates, Lindel\"of-type bounds, extreme values, resonance method, fractional moments}

\author{Athanasios~Sourmelidis}
\address{%
	Univ. Lille, CNRS\\%
	UMR 8524 -- Laboratoire Paul Painlevé\\%
	F-59000~Lille\\%
	France%
}
\email{athanasios.sourmelidis@univ-lille.fr}

\begin{document}
	\begin{abstract}
		It is shown that any number of distinct primitive $\mathrm{GL}(1)$ and $\mathrm{GL}(2)$ $L$-functions can simultaneously attain large values on the critical line.
		This is an unconditional improvement of a general result due to Heap and Li who have assumed the Riemann Hypothesis for more than three such $L$-functions.
		The joint distribution of $\mathrm{GL}(m)$ $L$-functions  to the right of the critical line  is also studied under certain zero-density estimates.
		In particular, we can partially  recover results of Inoue and Li on Dirichlet $L$-functions and generally improve upon the work of Mahatab, Pa\'nkowski and Vatwani on the class of $L$-functions introduced by Selberg.
		The main machinery in both cases, on and off the critical line, is the resonance method of Soundararajan and Hilberdink/Voronin, respectively.
		On the critical line we additionally introduce a variation of Heath-Brown's method for the fractional moments of the Riemann zeta-function which makes it possible to avoid using any information on the zero distribution of $L$-functions whose degree is less than three.
	\end{abstract}
	\maketitle
	
	\section{Introduction and Main results}
	The study of extreme values for zeta- and $L$-functions dates back to the work of Bohr, Landau, Littlewood and Titchmarsh \cite[Chapter VIII]{Titchmarsh1986}.
	It had several renaissances over the past century, most notably through the work of Selberg \cite{Selberg1946} for the argument of the Riemann zeta-function $\zeta(s)$, $s:=\sigma+it\in\mathbb{C}$, on the {\it critical line} $1/2+i\mathbb{R}$, the work of Levinson \cite{Levinson1972} and Montgomery \cite{Montgomery1977} for $\zeta(s)$ to the right of the critical line and the work of Balasubramanian and Ramachandra \cite{Balasubramanian1977} for $\zeta(1/2+it)$.
	The latest major addition to the mathematical toolkit that is utilized for addressing such problems has been provided by Soundararajan \cite{Soundararajan2008} who introduced  the so-called {\it resonance method}.
	Apart from improving upon \cite{Balasubramanian1977}, the versatility of this technique allowed new developments into the subject of extreme values of $L$-functions in the {\it $t$-aspect} (see \cite{Aistleitner2015,Bondarenko2017,Breteche2018} for some benchmarks) and in the {\it family/conductor aspect} (see \cite{Blomer2023} for a comprehensive treatment of the topic).
	
	In recent years attention has also turned to joint/simultaneously extreme values (in both aspects) which are contained in the more general study of the joint value distribution of $L$-functions having Dirichlet coefficients of number-theoretic interest. 
	For a family of such functions $L_h(s)=\sum_{n\geq1}a_{L_h}(s)n^{-s}$, $h\leq H$, there is a guiding principle that allows some degree of intuition on the results one can expect in such cases and it consists of three parts.
	The $L$-functions should at first have an {\it analytic continuation}, modulo finitely many poles, beyond their half plane of absolute convergence since  it is there that their value distribution yields important information on the arithmetic properties of $a_{L_h}(n)$.
	Secondly, it is desirable for the $L$-functions to have an {\it Euler product} representation, for then they can, loosely speaking, be expressed as
	\[
	L_h(s)\approx\exp\parentheses*{\sum_{p}a_{L_h}(p)p^{-s}}\times\parentheses*{\text{contribution from zeros}}.
	\]
	If the contribution from the zeros of $L_h(s)$ (the analytic factor of the above expression) is benign, then the sum over primes $p$ (the arithmetic factor) determines the behavior of $L_h(s)$.
	This can be very practical, though challenging to establish unconditionally, because the numbers $\log p$, prime $p$, are linearly independent over $\mathbb{Q}$.
	Therefore, the functions $p^{-it}$, prime $p$, behave like i.i.d. random variables and the above model of $L_h$ can be replaced by a random model which not only provides heuristics but it is also less rigid to handle.
	The last prerequisite for studying the joint distribution of $L_h$, $h\leq H$, is for them to exhibit some sort of {\it independence on arithmetic level}, namely that
	\begin{align}\label{PNT0}
	\frac{\log x}{x}\sum_{p\leq x}a_{L_h}(p)\overline{a_{L_g}(p)}\mathop{\longrightarrow}^{x\to+\infty}\kappa(h,g),
	\end{align}
	where $\kappa(h,g)\geq0$ and is non-zero only when $h=g$.
	This condition ensures that the entries of the tuple $(L_1,\dots, L_H)$ are uncorrelated  and act independently in the complex plane.
	In contemplation, this indicates that the state of the art on the value-distribution for a single $L$-function should also be within reach for a tuple of independent $L$-functions.
	
	An example is given by a family of Dirichlet $L$-functions $L(s,\chi_h):=\sum_{n\geq1}\chi_h(n)n^{-s}$, attached to nonequivalent  Dirichlet characters $\chi_h$, $h\leq H$, in which case \eqref{PNT0} follows from the orthogonality of characters and the prime number theorem.
	Selberg \cite{Selbergclass} described how his celebrated central limit theorem for $\log\zeta(s)$ could be adjusted to show that the functions
	\[
	\frac{\log\abs{L\parentheses{\frac{1}{2}+it,\chi_1}}}{\sqrt{\pi\log\log t}},\, 	\frac{\arg{L\parentheses{\frac{1}{2}+it,\chi_1}}}{\sqrt{\pi\log\log t}},\cdots,	\frac{\log\abs{L\parentheses{\frac{1}{2}+it,\chi_H}}}{\sqrt{\pi\log\log t}},\, 	\frac{\arg{L_H\parentheses{\frac{1}{2}+it,\chi_H}}}{\sqrt{\pi\log\log t}}
	\]
	become distributed, in the limit range $t$, like independent random variables, each having Gaussian density $\exp(-\pi u^2)\mathrm{d}u$.
Actually Selberg stated this in a more general setting, where he considers a class $\mathcal{S}$ of $L$-functions  (see Section \ref{classesofL} for the precise definitions) whose {\it primitive} elements satisfy, among other things, the three requirements of the principle previously mentioned.
A more systematic treatment of the value-distribution of $L$-functions from a subclass of $\mathcal{S}$ and a rigorous proof of the Gaussian-like behavior of their logarithms on the critical line has been carried out by Bombieri and Hejhal \cite{Bombieri1995}.
In general, the aforementioned classes and their variants, as they appear in the literature, are built by having in mind $L$-functions associated with automorphic representations of  $\mathrm{GL}(m)$ over $\mathbb{Q}$ and, at least in the case of $\mathcal{S}$, it is conjectured that these $L$-functions are its only elements.

	In the context of extreme values we would expect that each entry of a tuple $(L_1,\dots, L_H)$ of independent $L$-functions can be large in magnitude (as depicted in \cite{Aistleitner2017} for example) simultaneously.
	A first result was obtained by Mahatab, Pa\'nkowski and Vatwani \cite{Mahatab2022} in the case when $L_h\in\mathcal{S}_{\text poly}$, $h\leq H$, where $\mathcal{S}_{\text{poly}}\subseteq\mathcal{S}$ consists of the elements from $\mathcal{S}$ that can be represented by a {\it polynomial Euler product}. 
	They showed that if these $L$-functions do not have too many zeros off the critical line and satisfy the {Ramanujan Hypothesis} and a version of \eqref{PNT0}, then for any fixed $\sigma\in(1/2,1)$ and $\phi_1,\dots,\phi_H\in\mathbb{R}$, and for any sufficiently large $T\gg1$, there are $t_1,\dots, t_H\in[T,2T]$ such that
		\begin{align}\label{motivation0}
	\min_{h\leq H}	\mathrm{Re}\parentheses{e^{-i\phi_h}\log L_h(\sigma+it_h)}\gg\frac{(\log T)^{1-\sigma}}{\log\log T},
	\end{align}
with $|t_h-t_g|\leq2(\log T)^{(1+\sigma)/2}\sqrt{\log\log T}$ for all $g,h\in\set{1,\dots,H}$.
	The statement about the zeros is given with respect to the order of growth of the zero-counting functions in rectangles of the $L$-functions, namely that
	\[
	N_{L_h}(\sigma,T):=\#\set*{\rho:L_h(\rho)=0,\,T\leq \mathrm{Im}\rho\leq 2T,\,\mathrm{Re}\rho\geq\sigma},\quad h\leq H,\,\sigma>\frac{1}{2},\,T\geq1,
	\]
	grow moderately slow as $T\to+\infty$.
	The above result covers the case of Dirichlet $L$-functions and $L$-functions associated with holomorphic cusp forms of $\mathrm{GL}(2)$ over $\mathbb{Q}$  in the whole range $(1/2,1)$.
	Moreover, it covers  the case of higher degree $\mathrm{GL}(m)$ $L$-functions in a restricted neighborhood of $1$, depending on $m$, where it is known that $	N_{L}(\sigma,T)$ grows sufficiently slow.
	Although \eqref{motivation0} yields joint extreme values in directions (including small and large values) for a wide class of $L$-functions, it does not provide us with a single $t$ that works for all entries of $(L_1,\dots,L_H)$ simultaneously but rather produces a collection of $t_h$, $h\leq H$, which are not too far from each other.
	This has been resolved  by Inoue and Li \cite[Theorem 1]{Inoue2025}, who showed that, for Dirichlet $L$-functions, \eqref{motivation0} holds with $t_h=t$ for some $t\in[T,2T]$ and all $h\leq H$ (they actually prove more). 
	
	On the critical line, Heap and Li \cite[Theorem 1]{Heap2024} proved that if $\mathcal{S}_0\subseteq\mathcal{S}_{\text{poly}}$ is the class of $L$-functions consisting of the Riemann zeta-function, the primitive Dirichlet $L$-functions and the $L$-functions associated with primitive cusp forms on $GL(2)$ over $\mathbb{Q}$, then for $H=2$, any distinct $L_h\in\mathcal{S}_0$, $h\leq H$, and any sufficiently large $T\gg1$ we have that
		\begin{align}\label{motibvation1}
	\max_{t\in[T,2T]}\min_{h\leq H}|L_h({\textstyle{\frac{1}{2}}}+it)|\geq\exp\parentheses*{\sqrt{\frac{D\log T}{H\log\log T}}},
	\end{align}
	where $D>0$ is a sufficiently  small constant.
They also describe how their method can be modified to obtain the above inequality when $H=3$ and all $L_h$ are Dirichlet $L$-functions.
For $H>3$ however, they  have to assume the {\it Generalized Riemann Hypothesis} (GRH) in which case any positive $D<{1}/{2}$ is admissible \cite[Theorem 3]{Heap2024}.
The first objective here is to make the latter result unconditional.
	\begin{thm}\label{thm:ourresult0}
		If $L_1,\dots, L_H$ are elements of $\mathcal{S}_0$, then for any sufficiently small but fixed $D>0$ and for any sufficiently large $T\gg1$ we have that
		\[
		\max_{t\in[T,2T]}\min_{h\leq H}|L_h({\textstyle{\frac{1}{2}}}+it)|\geq\exp\parentheses*{\sqrt{\frac{D\log T}{H(H-1)\log\log T}}}.
		\]
		If every $L_h$, $h\leq H$, is a Dirichlet $L$-function, then any $D<{1}/{2}$ is admissible. 
		Otherwise, we can take $D<\frac{ 1/2-\theta}{3+\theta}$ where $\theta$ is an admissible bound towards the Ramanujan Hypothesis for cusp forms on $G L(2)$ over $\mathbb{Q}$.
	\end{thm}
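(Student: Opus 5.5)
We use Soundararajan's resonance method with a weight that penalizes the points where some $L_g$ is small. The weight is a fractional power of $|L_g|$, in the spirit of Heath-Brown's fractional moments, so no zero-density input is needed.

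\textbf{Set-up.} Let $R(t)=\sum_{n\le N}r(n)n^{-it}$ with $N=T^{\Delta}$, $\Delta$ small. Let $r$ be multiplicative, supported on squarefree $n$, with
\[
r(p)=\frac{L}{\sqrt p\log p}\,\frac{\sum_h \overline{a_{L_h}(p)}}{|\sum_h a_{L_h}(p)|}
\]
for $L^2\le p\le \exp((\log L)^2)$ and $L=\sqrt{c\log T\log\log T}$. The phase aligns $\sum_h a_{L_h}(p)$ with $r(p)$. Fix a smooth weight $\Phi$ supported in $[1,2]$ and consider the product over $h$ of the ratios
\[
M_h=\frac{\int L_h(\tfrac12+it)\,\prod_{g\ne h}|L_g(\tfrac12+it)|^{\beta}\,|R(t)|^2\Phi(t/T)\,dt}{\int \prod_{g\ne h}|L_g(\tfrac12+it)|^{\beta}|R(t)|^2\Phi(t/T)\,dt}.
\]
Simpler is the following pigeonhole. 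Let $V=\exp(\sqrt{D\log T/(H(H-1)\log\log T)})$ and suppose $\min_h|L_h|<V$ on $[T,2T]$. Split $[T,2T]=\bigcup_h E_h$, where $E_h$ is the set of $t$ with $|L_h|<V$. Then
\[
\Big|\sum_h\int L_h|R|^2\Phi\Big|\le \sum_h\int_{E_h}|L_h||R|^2\Phi+\sum_h\int_{E_h^c}|L_h||R|^2\Phi .
\]
Only the first sum is trivially small, so instead we weight each $L_h$ by the product over $g\ne h$ of $|L_g|^{\beta}$. This weight vanishes to order $\beta$ where $L_g$ is small, which is what produces the factor $H(H-1)$.

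\textbf{Steps.}
\begin{enumerate}
\item \emph{Lower bound on the main term.} Using approximate functional equations and the twisted first moment, show that $\int L_h(\tfrac12+it)|R(t)|^2\Phi(t/T)\,dt$ is at least $T\sum_{n}r(n)\sum_{d\mid n}a_{L_h}(d)\overline{r(n/d)}/\sqrt d$. With the above $r$ this is $\gg T\|r\|^2\exp\big((1+o(1))\sqrt{\Delta\log T/\log\log T}\big)$, uniformly in $h$, by the usual Rankin computation using \eqref{PNT0}. The error terms require the support bound $N\le T^{1/2-\epsilon}$ for Dirichlet characters. For cusp forms the shifted convolution or Voronoi error restricts the support to $N\le T^{(1/2-\theta)/(3+\theta)}$; this produces the stated ranges of $D$.
\item \emph{Upper bound for the fractional-weighted second moments.} For each pair $g\ne h$, bound
\[
\int |L_h|\,|L_g|^{\beta}|R|^2\Phi\ll T\|r\|^2\exp\big(o(\sqrt{\log T/\log\log T})\big).
\]
Following Heath-Brown, write $|L_g|^{\beta}=|L_g\mathcal M_g|^{\beta}|\mathcal M_g|^{-\beta}$, where $\mathcal M_g$ is a short mollifier-type Dirichlet polynomial approximating $L_g^{-1}$. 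Then apply H\"older and reduce to twisted second moments of $L_h$ and $L_g$ against short Dirichlet polynomials. These are available for degree $\le2$, which is why no zero-density estimates are needed.
\item \emph{Combine.} On the set where all $|L_h|\ge V$ there is nothing to prove. Otherwise, decompose according to which $L_h$ is small and compare the large main term from step 1 with the weighted contributions from step 2 via H\"older in $t$. Choosing $\beta$ and optimizing $\Delta$ against $H(H-1)$ pairs gives the constant $D/(H(H-1))$.
\end{enumerate}

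\textbf{Main obstacle.} Step 2 is the hardest: controlling the resonator-twisted fractional moment of $L_g$ unconditionally, uniformly in the length of $R$. The pointwise identity with Heath-Brown's device requires mollified moments twisted by $|R|^2$. I would first try to absorb $R$ into the mollifier's Dirichlet polynomial and use the known twisted second moment of $\mathrm{GL}(2)$ $L$-functions up to length $T^{(1/2-\theta)/(3+\theta)}$.
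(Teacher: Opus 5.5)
\textbf{There is a genuine gap.} The reduction that turns moment bounds into simultaneous largeness is missing, and you have placed the hard estimate in the wrong direction.

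\emph{The combination step.} A lower bound for $\int L_h|R|^2\Phi$, or for $|M_h|$, for each $h$ separately only produces its own point $t_h$ where $|L_h(\tfrac12+it_h)|$ is large. Because your numerators carry $L_h$ to the first power while the weights carry $|L_g|^{\beta}$, the $H$ inequalities cannot be summed into a pointwise statement forcing all $|L_h|$ to be large at once. For instance, a point with $|L_1|$ huge and $|L_2|$ tiny can make $\sum_h(|L_h|-HV)\prod_{g\ne h}|L_g|^{\beta}$ positive. Step 1 also gives no lower bound for the numerator of $M_h$, since $|L_g|^{\beta}$ is not accessible through an approximate functional equation.

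The reduction that works (Heap--Li's, used in the paper) puts the same exponent $q=2/(H-1)$ on every factor. With $L=L_1\cdots L_H$, it suffices that for each $h$
\[
\int_T^{2T}|L(\tfrac12+it)|^{q}|R|^2\,\mathrm{d}t>HV^{q}\int_T^{2T}|R|^2\prod_{g\ne h}|L_g(\tfrac12+it)|^{q}\,\mathrm{d}t .
\]
Summing over $h$ then gives a $t$ with $|L|^q>V^q\sum_h\prod_{g\ne h}|L_g|^q$, that is, $|L_h|>V$ for every $h$. Your $M_h$ with $L_h$ replaced by $|L_h|^{q}$ and $\beta=q$ is exactly this ratio.

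\emph{The hard side.} With this reduction, the upper bound is the easy part. H\"older with exponents $H-1$ bounds the right-hand integral by $\prod_{g\ne h}(\int|L_g|^2|R|^2)^{1/(H-1)}$. The twisted second moments (Proposition~\ref{essprop}) then give $T\exp((2+o(1))\tilde Z)\prod_p(1+|r(p)|^2)$, with $\tilde Z=\sqrt{\Delta\log T/(H\lambda\log\log T)}$; no mollifier is needed. Your Step 2 bound with only $\exp(o(\sqrt{\log T/\log\log T}))$ cannot hold. As $\beta\to0$ it contradicts your own Step 1. For $\beta>0$, $R$ resonates with every $L_g$ at once, so $|L_g|^{\beta}$ is itself exponentially large where $|R|^2$ concentrates.

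What your proposal never addresses is the lower bound
\[
\int|L|^{q}|R|^2\ge T\exp((qH+o(1))\tilde Z)\prod_p(1+|r(p)|^2)\quad\text{for }q<1.
\]
This cannot come from a twisted first moment, because $\int|L|^q|R|^2\ge|\int L|R|^2|^q(\int|R|^2)^{1-q}$ holds only for $q\ge1$. The paper proves it (Proposition~\ref{keyprop}) by Heath-Brown's lower-bound method:
\begin{itemize}
\item approximate $L^{q}$ by a short product $s_\pi=P_1P_2$;
\item apply Gabriel's convexity theorem to $(s-1)^{mu}(L^u-s_\pi^v)R^{2v}e^{v(s-i\tau)^2}$ with exponent $1/v$;
\item compare twisted moments of $s_\pi$ at $\tfrac12$ and at $\sigma_T=\tfrac12+B\log\log T/\log T$.
\end{itemize}
The $1/v$ loss in the convexity step forces $\lambda>v=H-1$. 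Together with $\kappa_L=H$ in the resonator length, this is the source of the factor $H(H-1)$, not a count of pairs.

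\emph{Smaller points.} With your phase-normalised $r(p)$, \eqref{PNT0} does not evaluate $\sum_p\mathrm{Re}(\overline{r(p)}a_{L_h}(p))/\sqrt p$ for each $h$; taking $r(p)\propto a_L(p)$ does. The admissible ranges of $D$ come from the twisted second moments together with $\Delta<1/2$ in Proposition~\ref{keyprop}, not from a twisted first moment.
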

	Leaving aside that the admissible range of $D$ is  more restricted compared to \cite[Theorem 1, Theorem 3]{Heap2024}, the lower bound of Theorem \ref{thm:ourresult0} is in addition smaller by a power of $1/\sqrt{H-1}$ than the one presented in \eqref{motibvation1}.
	Interestingly, if we were to also assume GRH then we could remove this power.
	In fact we could recover \cite[Theorem 3]{Heap2024}, which conditionally treats any number of $\mathrm{GL}(m)$ $L$-functions, under the same hypotheses.
To prove Theorem \ref{thm:ourresult0} we employ the resonance method of Soundararajan  and a variation of this technique will also 
be used to obtain results on extreme values (both large and small) off the critical line. 
	
	\begin{thm}\label{thm:ourresult1}
		Let $L_1,\dots,L_{H+G}$ be elements of $\mathcal{S}_{\text{poly}}$ satisfying the Ramanujan Hypothesis and \eqref{PNT0}.
		If there are  $\sigma_0\in(1/2,1)$ and $\alpha\in(0,1)$ with  $\sum_{h\leq H+G}N_{L_h}(\sigma_0,T)=o(T^{1-\alpha})$, then there is $D>0$ such that for any $\sigma\in(\sigma_0,1)$ and any sufficiently large  $T\gg1$
		\[
		\max_{t\in[T,2T]}\min\braces*{\min_{h\leq H}|L_h(\sigma+it)|,\max_{H<h\leq H+G}\abs*{L_{h}(\sigma+it)}^{-1}}\geq\exp\parentheses*{D\frac{(\log T)^{1-\sigma}}{\log\log T}}.
		\]
		For $\sigma=1$, the right-hand side of the above inequality ought to be replaced by $(\log\log T)^{D}$.
	\end{thm}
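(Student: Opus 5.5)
We would prove Theorem \ref{thm:ourresult1} with the Hilberdink/Voronin-type resonance method applied to the logarithms. The logarithms are first approximated by short Dirichlet polynomials over primes, which is where the zero-density hypothesis enters. The aim is to produce a single $t\in[T,2T]$ at which
\[
\mathrm{Re}\log L_h(\sigma+it)\ge D\frac{(\log T)^{1-\sigma}}{\log\log T}\ (h\le H)\quad\text{and}\quad \mathrm{Re}\log L_{h}(\sigma+it)\le -D\frac{(\log T)^{1-\sigma}}{\log\log T}\ (H<h\le H+G).
\]
Since the target is a minimum over the first group and, for the second group, a maximum of $|L_h|^{-1}$, the latter condition on all of the second group certainly suffices.

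\textbf{Step 1: approximation by prime sums.} The hypothesis $\sum_h N_{L_h}(\sigma_0,T)=o(T^{1-\alpha})$ and a standard Jensen/Borel--Carathéodory argument (as in Montgomery or Mahatab--Pa\'nkowski--Vatwani) show the following. Outside an exceptional set $\mathcal{E}\subset[T,2T]$ of measure $o(T^{1-\alpha/2})$, every $\sigma\ge\sigma_1>\sigma_0$ satisfies
\[
\log L_h(\sigma+it)=\sum_{p\le X}\frac{a_{L_h}(p)}{p^{\sigma+it}}+O(1),\qquad X=(\log T)^{A}.
\]
Here $A$ is large in terms of $\sigma_0$ and $\alpha$. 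The Ramanujan hypothesis and the polynomial Euler product let us discard prime powers and the terms with $p>X$, because there is a zero-free rectangle around $t$ of height $\asymp(\log T)^{\epsilon}$. Since $\sigma>\sigma_0$ is fixed, we may shrink to $\sigma_1\in(\sigma_0,\sigma)$.

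\textbf{Step 2: the resonator.} We set $x=c\log T\log\log T$ and define
\[
R(t)=\prod_{p\le x}\bigl(1-r_p p^{-it}\bigr)^{-1},\qquad r_p = \frac{\lambda}{p^{\sigma}}\cdot\mathbf 1_{p\in[\log T,\,x]}\cdot \mathrm{sgn\ weight},
\]
or, more cleanly, a finite product of $(1+r_p\cos(t\log p-\psi_p))$-type factors. The phase $\psi_p$ is chosen so that $e^{i\psi_p}$ aligns with $F(p):=\sum_{h\le H}a_{L_h}(p)-\sum_{H<h\le H+G}a_{L_h}(p)$. We then compare
\[
I_1=\int_T^{2T}\Bigl(\sum_{h}\epsilon_h\,\mathrm{Re}\!\sum_{p\le X}\tfrac{a_{L_h}(p)}{p^{\sigma+it}}\Bigr)|R(t)|^2\Phi(t/T)\,dt\quad\text{with}\quad I_0=\int |R|^2\Phi,
\]
where $\epsilon_h=\pm1$. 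Expanding and keeping the diagonal terms, $I_1/I_0\approx \sum_{p}\frac{r_p\,\mathrm{Re}(\overline{F(p)}a(p))}{p^{\sigma}}$. Condition \eqref{PNT0} then gives $\sum_{p\le x}|F(p)|^2\sim (H+G)\,x/\log x$ and the cross terms are negligible, so $I_1/I_0\gg (\log T)^{1-\sigma}/\log\log T$. Counting the integer pairs that the off-diagonal terms need requires $x^{\#}\le T^{1-\epsilon}$, and this is what fixes $c$.

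\textbf{Step 3: from the sum to all entries simultaneously (main obstacle).} A large signed sum does not by itself make each entry large. To handle this we run the same computation for each individual entry $h$ with the same resonator: by \eqref{PNT0}, $\int \epsilon_h\mathrm{Re}(\cdots)_h|R|^2\Phi\approx \kappa(h,h)\,I_0\,(\log T)^{1-\sigma}/\log\log T$ for every $h$. We also need the upper bound $|\sum_{p\le X}a_{L_h}(p)p^{-\sigma-it}|\ll (\log T)^{1-\sigma}/\log\log T$ (uniformly for $t$ outside $\mathcal E$), which follows from Ramanujan and $X=(\log T)^A$ only up to constants. This bound allows a positivity argument: apply the resonance to $\min_h$ via $\sum_h \epsilon_h\mathrm{Re}(\cdot)_h - M\sum_h(\epsilon_h\mathrm{Re}(\cdot)_h-\mu)_-$ or, more simply, use a second-moment bound $\int(\cdot)_h^2|R|^2\Phi\ll I_0\,((\log T)^{1-\sigma}/\log\log T)^2$. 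A Paley--Zygmund argument in the measure $|R|^2\Phi\,dt/I_0$ then yields a set of $t$ of relative mass $\gg1$ on which all entries exceed $D(\log T)^{1-\sigma}/\log\log T$ simultaneously.

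\textbf{Step 4: the exceptional set and $\sigma=1$.} The exceptional set contributes $\le \sup|R|^2\,|\mathcal E|\le T^{1-\alpha/2+o(1)}\cdot$(trivial bound), which is $o(I_0)$ since $I_0\asymp T\prod(1+r_p^2)$ and $\sup|R|^2\le \prod(1+r_p)^2=T^{o(1)}$ for our choice of $x$. Exponentiating gives the theorem. For $\sigma=1$ the same scheme applies with $r_p$ supported on $p\le(\log T)^{\eta}$. The diagonal sum $\sum_{p\le y}|F(p)|^2/p\asymp\log\log y$ then yields $\mathrm{Re}\log L\ge D\log\log\log T$, that is, $(\log\log T)^D$.
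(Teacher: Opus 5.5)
Your Step 3, which you rightly identify as the crux, does not go through as written. The pointwise bound $\bigl|\sum_{p\le X}a_{L_h}(p)p^{-\sigma-it}\bigr|\ll(\log T)^{1-\sigma}/\log\log T$ does not follow from Ramanujan. The trivial bound is $X^{1-\sigma}/\log X=(\log T)^{A(1-\sigma)}/(A\log\log T)$, which is larger by a positive power of $\log T$, because Step 1 needs $A$ large (certainly $A>1$; the truncation error there is roughly $(\log T)^{1-A(\sigma-\sigma_0)}$). Nothing in your argument supplies a sharper uniform bound.

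The Paley--Zygmund alternative is a non sequitur. Put $S_h(t):=\sum_{p\le X}a_{L_h}(p)p^{-\sigma-it}$, $Y_h:=\epsilon_h\mathrm{Re}\,S_h$ and $\mu:=|R(t)|^2\Phi(t/T)\,dt/I_0$. A bound $\mathbb{E}_\mu Y_h^2\ll(\mathbb{E}_\mu Y_h)^2$ gives, for each $h$ separately, a set of $\mu$-mass $\ge c>0$ on which $Y_h\ge\frac12\mathbb{E}_\mu Y_h$. Nothing prevents these sets from being disjoint. To intersect $H+G$ of them you need each to have mass $>1-1/(H+G)$, i.e.\ $\mathbb{E}_\mu Y_h^2=(1+o(1))(\mathbb{E}_\mu Y_h)^2$.

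The missing ingredient is a variance estimate, and it is available. Take a finite product $R=\prod_{p\in\mathcal{P}}(1+r_pp^{-it})$ of length $N$ with $XN\le T^{1-\epsilon}$. Keeping only the diagonal terms gives
\[
\mathbb{E}_\mu\bigl|S_h-\mathbb{E}_\mu S_h\bigr|^2=\sum_{p\le X}\frac{|a_{L_h}(p)|^2}{p^{2\sigma}}\Bigl(1-\frac{|r_p|^2}{(1+|r_p|^2)^2}\Bigr)+o(1)=O(1),
\]
which is $o\bigl((\mathbb{E}_\mu Y_h)^2\bigr)$. Chebyshev, a union bound over $h\le H+G$, and $\mu(\mathcal{E})=o(1)$ then produce a common $t$. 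Repaired this way, your route should also handle arbitrary directions $e^{-i\phi_h}$, by building $r_p$ from $\sum_h e^{-i\phi_h}a_{L_h}(p)$.

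The paper avoids the issue entirely. It never takes logarithms; instead it bounds $\int|L_h(\sigma+it)|^{\mp2}|R(it)|^2\,dt$ over a zero-free window of length $T^\alpha$ (Perron's formula with the Lindel\"of-type bound of Lemma \ref{Lindeloefinstrips}, giving Proposition \ref{technical}). Positivity of $\sum_{h\le H}|L_h|^{-2}+\sum_{H<h\le H+G}|L_h|^{2}$ then forces a single $t$ at which every summand is small.

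Step 2 also has to be corrected before any of this applies.
\begin{itemize}
\item With $x=c\log T\log\log T$, the product of the primes in $[\log T,x]$ is $e^{(1+o(1))x}=T^{(c+o(1))\log\log T}$. So your own requirement that the resonator have length $\le T^{1-\epsilon}$ fails for every fixed $c$, and the off-diagonal terms are no longer negligible. The geometric-series version $\prod(1-r_pp^{-it})^{-1}$ is not even a Dirichlet polynomial.
\item With $r_p=\lambda p^{-\sigma}$ and $\lambda$ fixed, the diagonal $\sum_p r_p|F(p)|^2p^{-\sigma}$ tends to $0$.
\item If you normalise the phase to $F(p)/|F(p)|$, the individual-entry sums become $\sum_p\mathrm{Re}\bigl(a_{L_h}(p)\overline{F(p)}\bigr)/(|F(p)|p^{\sigma})$, which \eqref{PNT0} does not control.
\end{itemize}
What works, and what the paper uses, is $r(p)=F(p)/C$ with $C$ a large constant, supported on $x_\epsilon\le p\le\beta\log T$ with $\beta<1$. (The paper takes $\beta<\alpha/2$ because it works in a window of length $T^\alpha$.) Then $\sup|R|^2=T^{o(1)}$, and \eqref{PNT0} gives each entry a mean of size $\asymp(\log T)^{1-\sigma}/\log\log T$ (Lemma \ref{computing}), which is already the target scale. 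Finally, Step 1 needs zero-free rectangles of height $\asymp X=(\log T)^A$ around $t$, not $(\log T)^\epsilon$; this is harmless for the measure of $\mathcal{E}$.
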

	It is unfortunate that we have to impose the Ramanujan Hypothesis, for we can not cover unconditionally the case of $L$-functions associated with Maaß forms.
	We can circumvent this restriction in Theorem \ref{thm:ourresult0} because the Dirichlet polynomial utilized in the resonance method on the critical line is built by many more and much larger primes compared to the one used off the critical line. 
	There is hence some flexibility on how to choose its Dirichlet coefficients in the former case, following an idea of Heap and Li, without waiving any of its defining properties.
Although the assumption on zero-density estimates is necessitated to be able to represent the $L$-functions by short Dirichlet polynomials, the situation off the critical line is simpler and does not require the stronger assumption of the GRH.
Moreover, such zero-density estimates are already known for $L$-functions from $\mathcal{S}_0$, while similar estimates for higher degree $\mathrm{GL}(m)$ $L$-functions close to the critical line are well beyond out of reach with the current technology.
	All things considered,  Theorem \ref{thm:ourresult1} recovers partially \cite[Theorem 1]{Inoue2025}, covers unconditionally the case of $L$-functions associated with holomorphic cusp forms on $\mathrm{GL}(2)$ over $\mathbb{Q}$ and, under weaker assumptions, improves upon \cite{Mahatab2022} by showing that \eqref{motivation0} holds with $t_h=t$ for some $t\in[T,2T]$ and all $h\leq H$ when $\phi_h\equiv0\bmod \pi$.
	Incidentally, a careful review of the proof of \eqref{motivation0} shows that if one is willing to not enforce the $t_h$, $h\leq H$, to be identical, then the 
	$L$-functions can be arbitrarily chosen and need not be independent.
	
	\begin{thm}\label{thm:ourresult2}
		Let $L_1,\dots,L_H\in\mathcal{S}$.
		Assume that there are $\sigma_0\in(1/2,1)$ and $\alpha\in(0,1)$ such that  $\sum_{h\leq H}N_{L_h}(\sigma_0,T)=o(T^{1-\alpha})$. 
		Assume also that there are $\kappa_h>0$, $h\leq H+G$, with
		\begin{align}\label{PNT1}
			\frac{\log x}{x}\sum_{p\leq x}|a_{L_h}(p)|^{\mathrm{i}(h)}\mathop{\longrightarrow}^{x\to+\infty}\kappa_h,
		\end{align}
	where $\mathrm{i}(h)\in\set{1,2}$ if $L_h\in\mathcal{S}_{\text{poly}}$ and satisfies the Ramanujan Hypothesis and $\mathrm{i}(h)=1$ otherwise.
		For any $\phi_1,\dots,\phi_H\in\mathbb{R}$ and $T\gg1$, there are $t_1,\dots,t_H\in[T,2T]$ such that
		\[
		\min_{h\leq H}	\mathrm{Re}\parentheses{e^{-i\phi_h}\log L_h(\sigma_0+it_h)}\gg\frac{(\log T)^{1-\sigma_0}}{\log\log T},
		\]
		with $|t_h-t_g|\leq2(\log T)^{(1+\sigma_0)/2}\sqrt{\log\log T}$ for all $g,h\in\set{1,\dots,H}$.
	\end{thm}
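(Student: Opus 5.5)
The plan is to handle each $L_h$ separately, using Voronin's/Hilberdink's version of the resonance method off the critical line, and then to show that the points $t_h$ can all be found in one short window. Fix $\phi_h$. Since $L_h\in\mathcal{S}$ and $\sum_h N_{L_h}(\sigma_0,T)=o(T^{1-\alpha})$, I first remove the zeros. Outside a set of measure $o(T)$ in $[T,2T]$, for every $h\leq H$,
\[
\log L_h(\sigma_0+it)=\sum_{p\leq X}a_{L_h}(p)p^{-\sigma_0-it}+O(1),\qquad X=(\log T)^{A}.
\]
This is the standard Borel--Carathéodory and Hadamard three-circle argument applied away from zeros in the half-plane $\mathrm{Re}\,s>\sigma_0-\epsilon$ (after shrinking $\sigma_0$ slightly, or applying the density hypothesis at a smaller abscissa). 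Prime powers contribute $O(1)$ when $L_h$ has a polynomial Euler product and satisfies Ramanujan. For general $L_h\in\mathcal{S}$ they are controlled by the axiom $b(p^k)\ll p^{k\vartheta}$, $\vartheta<1/2$.

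The problem is then to make
\[
P_h(t):=\mathrm{Re}\,e^{-i\phi_h}\sum_{p\leq X}a_{L_h}(p)p^{-\sigma_0-it}
\]
of size $(\log T)^{1-\sigma_0}/\log\log T$. I will use a Dirichlet polynomial resonator $R_h(t)=\prod_{p\leq Y}(1-r_h(p)p^{-it})^{-1}$, truncated to length $\ll T^{1/2}$. Here $Y\asymp \log T\log\log T$, and $r_h(p)$ points in the direction of $e^{i\phi_h}\overline{a_{L_h}(p)}$. When $\mathrm{i}(h)=1$ I take $r_h(p)=e^{i\phi_h}\,\overline{a_{L_h}(p)}/|a_{L_h}(p)|\cdot c\,p^{-\sigma_0}$ (with $r_h(p)=0$ if $a_{L_h}(p)=0$). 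When $\mathrm{i}(h)=2$ I may take $r_h(p)\propto e^{i\phi_h}\overline{a_{L_h}(p)}p^{-\sigma_0}$, which Ramanujan keeps bounded. I then compare
\[
\int P_h(t)|R_h(t)|^2\Phi(t/T)\,dt\quad\text{and}\quad\int|R_h(t)|^2\Phi(t/T)\,dt.
\]
The ratio is $\gg\sum_{p\leq Y}|a_{L_h}(p)|^{\mathrm{i}(h)}p^{-2\sigma_0}$, which by \eqref{PNT1} and partial summation is $\gg Y^{1-2\sigma_0}\cdot Y/\log Y$, up to the choice of scaling. A Voronin/Montgomery-type optimisation of $c$ and of the weights then gives the claimed size. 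Averaging against $|R_h|^2$ yields $t_h$ in the good set with $P_h(t_h)$ large, provided the exceptional set carries negligible $|R_h|^2$-mass. That follows from Cauchy--Schwarz together with a fourth-moment bound for $R_h$, and this is exactly where the $T^{1-\alpha}$ saving in the zero-density hypothesis enters.

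Next the $t_h$ must be clustered. Because the $L_h$ are not assumed independent, a single resonator cannot serve all of them. Instead I replace the long-interval averaging by averaging over $t\in[t_0,t_0+\Delta]$ with $\Delta=(\log T)^{(1+\sigma_0)/2}\sqrt{\log\log T}$, using a smooth kernel of width $\Delta$. This is the Montgomery--Levinson/Hilberdink short-interval resonance. The frequencies $\log p$, $p\leq Y$, are separated by about $1/Y$, and the resonator has only about $Y/\log Y$ effective primes, so I expect the off-diagonal terms to be controlled once $\Delta$ is of this size. Concretely, Dirichlet's approximation theorem (a Kronecker-type step) on the primes $p\leq Y$ with precision $\delta$ gives a $t_0\in[T,2T-\Delta]$ such that $p^{-it_0}$ is close to $1$ for all $p\leq Y$ simultaneously. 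The balance between $Y$, $\delta$ and the tail $p>Y$ produces $\Delta$. The clustering is then automatic: inside $[t_0,t_0+\Delta]$, the short-interval resonance applied to each $h$ separately finds $t_h$ in that window. Hence $|t_h-t_g|\leq 2\Delta$.

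I expect the main obstacle to be combining the short-interval averaging with the removal of zeros. The exceptional set of measure $o(T^{1-\alpha})$ must avoid the chosen window, or at least carry little resonator mass inside it. My first attempt is a counting argument: there are about $T/\Delta$ disjoint windows, and the admissible $t_0$ given by Dirichlet's theorem number at least $T\delta^{\pi(Y)}$. With $Y$ of the stated size this is $T^{1-o(1)}$, which exceeds $T^{1-\alpha}$, so some admissible window meets no exceptional point.
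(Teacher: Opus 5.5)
Your clustering step has a genuine gap, and it is the heart of the statement. First, a ``short-interval resonance'' on a window of length $\Delta=(\log T)^{O(1)}$ is not available. The resonator is supported on products of primes, with length far beyond $\Delta$: you take length up to $T^{1/2}$, and a resonator with $|r_h(p)|\asymp1$ on the primes $p\asymp\log T$, which is what size $(\log T)^{1-\sigma_0}/\log\log T$ requires, puts its mass on integers of size $T^{c}$. So the frequencies that matter are $\log(m/n)$ over its whole support, not the well-spaced $\log p$, and the off-diagonal terms are not negligible; the $O(n)$ term in Lemma \ref{M-V} dominates $\Delta$. Second, your Dirichlet step is homogeneous, so it only places the window where $p^{-it_0}\approx1$. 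On $[t_0,t_0+\Delta]$ the prime sum then just reproduces $\sum_{p\le Y}a_{L_h}(p)p^{-\sigma_0-iu}$ with $|u|\le\Delta$. For coefficients without a main term (a non-principal character, a cusp form) nothing makes this reach $(\log T)^{1-\sigma_0}/\log\log T$ in a prescribed direction $\phi_h$. That needs $t\log p\approx\arg a_{L_h}(p)-\phi_h \pmod{2\pi}$ for a large set of primes at once, an \emph{inhomogeneous} target, which a polylogarithmic segment does not reach. Third, even an inhomogeneous alignment of the same primes cannot serve different $h$ at one point, since the $L_h$ may impose incompatible targets on the same $p$ (e.g.\ $L_1=L_2$, $\phi_2=\phi_1+\pi$).

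The paper uses no resonator here. It gives each $L_h$ its own range of primes $x_h/e<p<ex_h$ with $x_h=e^{2h}\log T/(CM)$. These ranges are disjoint, so all the $\log p$ together are linearly independent over $\mathbb{Q}$. The quantitative inhomogeneous Kronecker theorem (Lemma \ref{Chen}) is then applied inside a zero-free window of length $T^\alpha$, which the density hypothesis supplies directly with no counting of windows. This gives a single $t_0$ with $t_0\log p\approx\arg a_{L_h}(p)-\phi_h$ for all these $p$ and all $h$, regardless of any dependence among the $L_h$. The Fej\'er-kernel convolution formula (Lemma \ref{convol}) then bounds $\max_{|t|\le\tau}\mathrm{Re}\,e^{-i\phi_h}\log L_h(\sigma_0+it_0+it)$ from below by half the smoothed sum over $|\log(n/x_h)|<1$ at $t_0$, up to $O(x_h/\tau)$. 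The kernel both isolates the $h$-th prime range and absorbs the zeros, which is where $\tau=(\log T)^{(1+\sigma_0)/2}\sqrt{\log\log T}$ and $|t_h-t_g|\le2\tau$ come from. This route also avoids four further problems in your proposal:
\begin{enumerate}
\item A density hypothesis at $\sigma_0$ does not give a short Dirichlet polynomial approximation of $\log L_h$ \emph{at} $\sigma_0$. That needs zero-freeness to the left of $\sigma_0$, and ``shrinking $\sigma_0$'' changes the statement.
\item For general $L\in\mathcal{S}$ the prime powers are not $O(1)$ at $\sigma_0$. Only those with $x_h/e<p^\ell<ex_h$ need controlling, and they are small.
\item Your choice $r_h(p)\propto p^{-\sigma_0}$ only yields the bounded ratio $\sum_p|a_{L_h}(p)|p^{-2\sigma_0}$, since $2\sigma_0>1$.
\item With $Y\asymp\log T\log\log T$ one has $\delta^{\pi(Y)}=T^{-(1+o(1))\log(1/\delta)}$, not $T^{-o(1)}$. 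Dirichlet's theorem also gives no such count of admissible $t_0$.
\end{enumerate}
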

		\paragraph{\bf Structure and Notations}
		The next two sections  are dedicated in an outline of the core ideas behind the proofs of the main results and the introduction of the classes $\mathcal{S}_{\text{poly}}\subseteq\mathcal{S}$ along with short descriptions of the distinctive properties that their elements share.
	The proof of Theorem \ref{thm:ourresult0} is presented 	in Section \ref{firsttheorem} by assuming the truth of Proposition \ref{keyprop}.
		This is the key result of the current work and Section \ref{seckeyprop} will be devoted entirely on how to obtain it, since its statement could be of independent interest and its proof is lengthy.
Theorems \ref{thm:ourresult1} and \ref{thm:ourresult2} will be proved in Section \ref{secondthm} and \ref{thirdthm}, respectively.
In Section \ref{concluding} we make an informal discussion on conditional aspects of Theorem \ref{thm:ourresult0} and unconditional applications of Theorem \ref{thm:ourresult1} and Theorem \ref{thm:ourresult2}.
An appendix is provided in the end containing a list of technical yet necessary estimates for the proof of Theorem \ref{thm:ourresult0}.
		
	We use the Landau notation $f(x)=O(g(x)$ and the Vinogradov notation $f(x)\ll g(x)$ to mean that there exists some constant $C>0$ such that $|f(x)|\leq Cg(x)$ holds for all admissible values of $x$ (where the meaning of ``admissible'' will be clear from the context). 
	If both $f$ and $g$ are positive functions, then $f(x)\asymp g(x)$ will stand for $f(x)\ll g(x)\ll f(x)$.
	We write $f(x)=o(g(x))$ if $g(x)$ is positive for all sufficiently large values of $x$ and $f(x)/g(x)$ tends to zero as $x\to+\infty$.
	Unless stated otherwise, the notation ``$o(1)$'' is reserved for a function of the variable $T$ that converges to $0$ as $T\to+\infty$.
	The parameter $\epsilon$ will denote throughout the article a sufficiently small positive number which may differ from line to line but can be fixed eventually.	
	All implicit constants are absolute with respect to $T$ but may depend on the occasional parameters within the proofs.
	Lastly, the notation $(a,b)$ will stand for the greatest common divisor of integers $a$ and $b$. 
	\section{An overview of the ideas}
	\subsection{On the critical line}
Let $L_h\in\mathcal{S}_0$, $h\leq H$, and $L=\prod_{h\leq H}L_h$ be their product.
	Heap and Li \cite{Heap2024} observe that \eqref{motibvation1} follows if we can show that there is $t\in[T,2T]$ such that
		\begin{align}\label{mainidea0}
		\abs{L\parentheses{{\textstyle{\frac{1}{2}+it}}}}^{q}\geq V^q\sum_{h\leq H}\prod_{g\neq h}\abs{L_g\parentheses{{\textstyle{\frac{1}{2}+it}}}}^{q},\quad V=\exp\parentheses*{(D+o(1))\sqrt{\frac{\log T}{\log\log T}}},
	\end{align}
	where $q=2$ and $D$ is some suitably chosen positive constant.
	A sufficient condition to establish the latter inequality would then be to show that, for every $h\leq H$,
		\begin{align}\label{mainidea}
		\int_{T}^{2T}\abs{L\parentheses{{\textstyle{\frac{1}{2}+it}}}}^{q}\abs{R\parentheses{{\textstyle{\frac{1}{2}+it}}}}^{2}\mathrm{d}t\gg {V}^q\int_{T}^{2T}\abs{R\parentheses{{\textstyle{\frac{1}{2}+it}}}}^{2}\prod_{g\neq h}\abs{L_g\parentheses{{\textstyle{\frac{1}{2}+it}}}}^{q}\mathrm{d}t,
	\end{align}
	with any possible discrepancies on the size of the implicit constants being absorbed by the factor $\exp\parentheses*{o(1)\sqrt{\frac{\log T}{\log\log T}}}$ of $V$.    
Thus, it all boils down in obtaining a sharp lower bound for the left-hand side of \eqref{mainidea} and a sharp upper bound for its right-hand side.
	The lower bound is simpler to derive because after invoking H\"older's inequality to show that
		\begin{align}\label{Hoelderlimit}	\int_{T}^{2T}\abs{L\parentheses{{\textstyle{\frac{1}{2}+it}}}}^{q}\abs{R\parentheses{{\textstyle{\frac{1}{2}+it}}}}^{2}\mathrm{d}t\geq\abs*{\int_T^{2T}{L\parentheses{{\textstyle{\frac{1}{2}+it}}}}\abs{R\parentheses{{\textstyle{\frac{1}{2}+it}}}}^{2}\mathrm{d}t}^{q}\parentheses*{\int_T^{2T}\abs{R\parentheses{{\textstyle{\frac{1}{2}+it}}}}^{2}\mathrm{d}t}^{1-q},
		\end{align}
		it remains to bound from below the first factor on the right-hand side and from above the second factor.
		But the former follows from a standard procedure (see \cite[Section 2]{Aistleitner2017} or \cite[Section 4]{Heap2024}), while the latter is designed from the  beginning to be easily established.
		
		The task of obtaining sharp upper bounds for the right-hand side of \eqref{mainidea} is more difficult and stems out from understanding the same moments untwisted by $|R(\frac{1}{2}+it)|^2$.
		When $H=2$ and $H=3$, these moments reduce to
	\[
\int_{T}^{2T}|L_h({\textstyle{\frac{1}{2}}}+it)|^q\mathrm{d}t\quad\text{and}\quad\int_{T}^{2T}|L_h({\textstyle{\frac{1}{2}}}+it)L_g({\textstyle{\frac{1}{2}}}+it)|^q\mathrm{d}t,\quad L_h,L_g\in\mathcal{S}_0,
\] 
respectively.
In the first case we have  precise asymptotics (recall we set $q=2$) and the resonance method is applicable as has been shown in \cite[Proposition 3]{Heap2024}.
We already encounter difficulties when $H=3$ because the only moments
for which we have sharp upper bounds are if both $L_h$ and $L_g$ are Dirichlet $L$-functions.
When $H>3$ nothing is known unless we assume the GRH  (all {\it nontrivial} zeros of the involved $L$-functions lie on the critical line),  in which case Hagen \cite{Hagen2025} recently provided satisfying answers by building on the work of Soundararajan \cite{Soundararajan2009} and Harper \cite{Harper2013}.
In that regard, Heap and Li \cite[Proposition 4]{Heap2024} incorporate ideas of Soundararajan and Harper to  conditionally obtain sufficiently good upper bounds for the right-hand side of \eqref{mainidea}.
Moreover, their method is general enough to include the case of any number of $\mathrm{GL}(m)$ $L$-functions.

We will work in the opposite direction and start by noticing that relation \eqref{motibvation1} with $H=3$ and any $L_h\in\mathcal{S}_0$, $h\leq H$, follows unconditionally and for free from the results already obtained in \cite{Heap2024}. 
Indeed, based on the preceding discussion, it suffices to show that \eqref{mainidea} (and subsequently \eqref{mainidea0})  hold true for some $t\in[T,2T]$ but with $q=1$ this time.
The left-hand side is then bounded from  below as in \eqref{Hoelderlimit} while the right-hand side can be seen in view of H\"older's inequality to be bounded from above by
\[
\parentheses*{\int_{T}^{2T}\abs{L_h\parentheses{{\textstyle{\frac{1}{2}+it}}}}^{2}\abs{R\parentheses{{\textstyle{\frac{1}{2}+it}}}}^{2}\mathrm{d}t}^{1/2}\parentheses*{\int_{T}^{2T}\abs{L_g\parentheses{{\textstyle{\frac{1}{2}+it}}}}^{2}\abs{R\parentheses{{\textstyle{\frac{1}{2}+it}}}}^{2}\mathrm{d}t}^{1/2}
\]
for both of which we have \cite[Proposition 3]{Heap2024} good upper bounds.
We can argue in a similar manner for $H=4$ when at most one of the $L$-functions is associated with a cusp form  of $\mathrm{GL}(2)$  over $\mathbb{Q}$ and for $H=5$ when every $L_h$ is a Dirichlet $L$-function. 
This is the limit of the unconditional results as enforced by the sharp second moments of $\mathrm{GL}(1)$ and $\mathrm{GL}(2)$ $L$-functions and \eqref{Hoelderlimit} which solely holds for $q\geq1$.

It should be clear by now that the number of factors of the integrand in the right-hand side of  \eqref{mainidea} can become irrelevant if we know the twisted second moment of each individual factor (or any moment to that matter) and $q>0$ is sufficiently small, because an appeal to H\"older's inequality implies an upper bound which is good enough for the applications in mind.
Hence, it suffices to obtain reasonable lower bounds for {\it twisted fractional moments} of $L$-functions as depicted in the left-hand side of \eqref{mainidea} when $q\in\mathbb{Q}_{>0}$ is sufficiently small.
Sharp lower bounds for fractional moments of $\mathrm{GL}(m)$ $L$-functions have been obtained by several authors \cite{Akbary2012,Fomenko2011,Laurincikas2007,Pi2011,Sun2008} who relied on Heath-Brown's method \cite{HeathBrown1981} for the fractional moments of $\zeta(s)$ (see also \cite{Heap2022} for a novel approach).
We too shall build on this method to prove Proposition \ref{keyprop} which provides the necessary lower bounds for the twisted moments.
	\subsection{Off the critical line}
	The underlying reason for assuming the GRH when working on the critical line is so that we can express the $L$-functions (and their products) by short Dirichlet polynomials while controlling  the influence of their zeros.
	Similar operations can be performed to the right of the critical line by only assuming zero-density estimates which ensure that, as $T\to\infty$, there are horizontal strips that contain no such zeros.
	In fact these strips are wide enough that will allow us to apply the resonance method.
	Let $L_h$, $h\leq H$, be as in Theorem \ref{thm:ourresult1} and $I\subseteq[T,2T]$ be an interval such that $L_h(\sigma+it)\neq0$ for all $\sigma\geq\sigma_0$ and $t\in I$. 
		We will later see that the length of $I$ can be as large as a power of $T$.
	Arguing as in the previous subsection, in order to derive that the $L_h$, $h\leq H$, can attain simultaneously large values in the segment $[\sigma+iT,\sigma+2iT]$ it suffices to prove inequality \eqref{mainidea0} with $\sigma$ in place of ${1}/{2}$, $q=2$ and $V=\exp((D+o(1))(\log T)^{1-\sigma}/\log\log T)$.
	Dividing both sides of this inequality with $|L(\sigma+it)|^2$, we see that this is equivalent to showing that
	\[
	1\geq V^2\sum_{h\leq H}\abs{L_h\parentheses{{\textstyle{\sigma+it}}}}^{-2},\quad \text{for some }t\in I.
	\]
	A sufficient condition for the above inequality to be true can be seen to be
	\[
	\int_{I}\abs{L_h\parentheses{{\textstyle{\sigma+it}}}}^{-2}|R(it)|^2\mathrm{d}t\ll V^{-2}\int_{I}|R(it)|^2\mathrm{d}t,\quad h\leq H,
	\]
	where $R(s)$ is a Dirichlet polynomial suitably constructed to resonate with $L_h$, $h\leq H$.
	Thus, we only need to establish the latter relation while for including also the case of small values we just need to substitute $L_h^{-2}$ with $L_h^2$.

	The proof of \eqref{motivation0} and,  by extension, of Theorem \ref{thm:ourresult2} differs from those of the previous theorems.
The first step is to use a {\it convolution formula} that involves the Fej{\'e}r kernel to show roughly that, for any $h\leq H$,
		\begin{align}\label{convolcommon}
		\begin{split}
			\max_{|y|\leq\tau}\mathrm{Re}\parentheses{e^{-i\phi_h}\log L_h(s+iy)}\geq\sum_{p\asymp x}\frac{|a_{L_h}(p)|\cos(\arg(a_{L_h}(p))-t\log p)}{p^{\sigma}}+(\text{error}),
		\end{split}
	\end{align}
	where $t=\mathrm{Im}s$ ranges in an interval $I$ as described in the previous paragraph, $x\asymp\log T$ and $\tau\asymp(\log T)^{(1-\sigma)/2}\sqrt{\log\log T}$.
	The maximum in the left-hand side is the reason that makes this approach fall short in producing a single $t$ on which every $L_h$, $h\leq H$, can exhibit an extremal behavior.
	The next step in the proof would be to apply the {\it quantitative Kronecker theorem} to the  frequencies $\log p$, $p\asymp x$, in the right-hand side of \eqref{convolcommon} to show that the main term is sufficiently large.
	The immediate problem here is that the number $t$ produced by Kronecker's theorem may satisfy $t\log p\approx\arg(a_{L_h}(p))\bmod 2\pi$, $p\asymp x$, for some $h\leq H$, but this is not guaranteed for the other $L_g$, $h\neq g\leq H$.
	Mahatab et al. suggest an approach, which relies on the near orthogonality  \eqref{PNT0} of the Dirichlet coefficients, that enables us to overcome this obstacle.
	However, there is a way to avoid this approach altogether by noting that in \eqref{convolcommon} we may replace $x$ for $x_h$, $h\leq H$, where all $x_h$ are still of size $\asymp\log T$ but distant enough from each other so that $\sum_{p\asymp x_h}$ and $\sum_{p\asymp x_g}$ share no common index when $g\neq h$.
		This allows us to apply Kronecker's theorem directly, rendering the independence of the $L$-functions superfluous.
	\section{\texorpdfstring{Two classes of $L$-functions}{Two classes of L-functions}}\label{classesofL}
	The class $\mathcal{S}$, as introduced by Selberg  \cite{Selbergclass} without taking into account the Ramanujan Hypothesis, consists of ordinary Dirichlet series
	\begin{align*}
		{L}(s):=\sum\limits_{n\geq1}{a_L(n)}{n^{-s}}
	\end{align*}
	that are absolutely convergent for $\sigma>1$ and satisfy the following conditions:
	\begin{enumerate}[label=(\Roman*)]
		\item{\it Analytic continuation.} \label{finiteorder}There exists an integer $m\geq0$ such that $(s-1)^m{L}(s)$ is an entire function of finite order.
		\item{\it Functional equation.}\label{functional equation} ${L}(s)$ satisfies a relation  of the type 
		\begin{align*}
			L(s)\chi(s)=\omega\overline{\gamma(1-\overline{s})L(1-\overline{s})},\quad\gamma(s):=Q^s\prod\limits_{k\leq K}\Gamma\left(\lambda_ks+\mu_k\right),
		\end{align*}
		for some real numbers $Q, \lambda_k>0$, and complex numbers $\mu_k$, $\omega$ with $|\omega|=1$.
		\item{\it Euler product.}\label{Eulerproduct} ${L}(s)$ has an absolutely convergent product representation
		\begin{align*}
			{L}(s)=\prod_p\exp\parentheses*{\sum_{\nu\geq1}{b(p^\nu)}{p^{-\nu s}}}=:\prod_{p}L(s;p),\quad\sigma>1,
		\end{align*}
		for some complex numbers $b(p^\nu)\ll p^{\nu\theta_L }$ with $\theta_L<1/2$.
	\end{enumerate}
	The subclass $\mathcal{S}_{\text{poly}}\subseteq\mathcal{S}$ will consist of those elements from $\mathcal{S}$ which satisfy
	\begin{enumerate}[label=(\Roman*)]
		\setcounter{enumi}{3} 
		\item{\it Polynomial Euler product.}\label{pEulerproduct} There is an integer $\partial_L\geq0$ such that
		\begin{align*}
			L(s;p)=\prod_{j\leq \partial_L}\parentheses*{1-{a_{L,j}(p)}{p^{-s}}}^{-1}
		\end{align*}
		for any prime $p$  and complex numbers $a_{L,j}(p)\ll p^{\theta_L }$ with $\theta_L<1/4$.
	\end{enumerate}	
	
	In view of condition \ref{functional equation} we can define the number $d_{L}:=2\sum_{ k\leq K}\lambda_k$, which is called the {\it degree} of $L$ and  it is uniquely determined for any ${L}$-function satisfying \ref{finiteorder} and \ref{functional equation}.
	Indeed, if $N_L(T)$ counts the number of zeros of $L(s)$ in the rectangle $0\leq\sigma\leq1$, $T\leq t\leq 2T$, then 
	$
	N_L(T)=\frac{d_L}{2\pi}T\log {T}+O(T),
	$
	which is in analogy to the Riemann-von Mangoldt formula for $\zeta(s)$ \cite[\S 9.3]{Titchmarsh1986}.
	Hence, $d_L$ is well defined and it is conjectured \cite{Kaczorowski2008} that $d_L=\partial_L$ when $L\in\mathcal{S}_{\text{poly}}$. 
	Another consequence of \ref{functional equation} in combination with \ref{finiteorder} and the Phragm\'en-Lindel\"of principle is (see \cite[Theorem 6.8]{Steuding2007}) that
	\begin{align}\label{ordergrowth}
	L(\sigma+it)\ll (1+|t|)^{d_L(1-\sigma)/2+\epsilon},\quad0\leq\sigma\leq1,\,t\in\mathbb{R}.
	\end{align}
	
	Conditions \ref{Eulerproduct} and \ref{pEulerproduct} provide information on the arithmetic character of $a_L(n)$ and the analytic character of $L(s)$.
	Firstly, they imply  that $a_L(n)$, $n\geq1$, is a multiplicative function with $a_L(p)=b_L(p)\ll\sqrt{p}$ and, if $L\in\mathcal{S}_{\text{poly}}$, with $a_L(p)=\sum_{j\leq\partial_L}a_{L,j}(p)\ll p^{1/4}$ for any prime $p$. 
	In the sequel however it will also be necessary to have better control on the size of these Dirichlet coefficients.
	This will often, but not always, be realized by assuming the {\it Ramanujan Hypothesis} which asserts that $a_{L}(n)\ll n^\epsilon$ for every integer $n\geq1$.
	If in addition $L\in\mathcal{S}_{\text{poly}}$, then $|a_{L,j}(p)|\leq 1$ for all primes $p$ and integers  $j\leq\partial_L$ (see \cite[Lemma 2.2]{Steuding2007}) whence $|a_{L}(p)|\leq \partial_L$ for any prime $p$.
	The absolute convergence of the Euler product in the half-plane $\sigma>1$ yields that  $L(s)\neq0$ therein.
	Hence, by a classic result due to Landau (see for example \cite[Theorem 1]{Bombieri2011}) the function $1/L(s)$ is also represented as an ordinary Dirichlet series and as an Euler product
	\[
	\frac{1}{L(s)}:=\sum_{n\geq1}{a_{1/L}(n)}{n^{-s}}=\prod_{p}\frac{1}{L(s;p)},
	\] which are both absolutely convergent in the half-plane $\sigma>1$.
	In particular, $a_{1/L}(n)$, $n\geq1$, is also a multiplicative function satisfying $a_{1/L}(p)=-a_{L}(p)$ for any prime $p$.
	
	By Dirichlet's convolution formula it follows that, for any $L_1, L_2\in\mathcal{S}$, the functions
	$L_1L_2$ and $L_1/L_2$ can also be represented as ordinary and absolutely convergent Dirichlet series
	\begin{align*}
		L_1(s)L_2(s):=\sum_{n\geq1}{a_{L_1L_2}(n)}{n^{-s}}\quad\text{ and }\quad\frac{L_1}{L_2}(s):=\sum_{n\geq1}{a_{L_1/L_2}(n)}{n^{-s}},\quad\sigma>1,
	\end{align*}
	with the arithmetical functions 
	\[
	a_{L_1L_2}(n)=\sum_{d\mid n}a_{L_1}(n)a_{L_2}\parentheses*{\frac{n}{d}}\quad\text{ and }\quad a_{L_1/L_2}(n)=\sum_{d\mid n}a_{L_1}(n)a_{1/L_2}\parentheses*{\frac{n}{d}},\quad n\geq1,
	\]
	being multiplicative and, thus, satisfying for any prime $p$ that
	\begin{align}\label{convolution}
		a_{L_1L_2}(p)=a_{L_1}(p)+a_{L_2}(p)\quad\text{ and }\quad a_{L_1/L_2}(p)=a_{L_1}(p)-a_{L_2}(p).
	\end{align}
	Both $\mathcal{S}_{\text{poly}}\subseteq\mathcal{S}$ are multiplicitavely closed in the sense that the product of any two elements from a given class is also an element of the same class.
	Moreover, $L\in \mathcal{S}$ will be called {\it primitive} if any expression $L=L_1L_2$ for some $L_1,L_2\in\mathcal{S}$ implies that $L_1=1$ or $L_2=1$.
	
	Typical examples of primitive elements from $\mathcal{S}$, and the ones we treat in actuality here, are $L$-functions associated with irreducible cuspidal automorphic representations of $\mathrm{GL}(m)$ over $\mathbb{Q}$ (upon normalization) with $m=d_L=\partial_L$. 
	 In \cite[Section 2]{Hagen2025}, \cite[Section 2]{Heap2024} and \cite[Section 2]{Rudnick1996} can be found concise descriptions of their basic properties, while  the interested reader is referred to \cite[Chapter 5]{Iwaniec2004} for a more detailed introduction of them.
	 In a nutshell, we know that $\mathcal{S}_0\subseteq\mathcal{S}_{\text{poly}}$ with the Riemann zeta-function, Dirichlet $L$-functions and $L$-functions associated with holomorphic cusp forms satisfying the Ramanujan Hypothesis, while for $L$-functions associated with Maa{\ss}  forms it is known \cite[Appendix 2]{Kim2002} that $\theta_L\leq7/64<1/4$.
	  If $m\geq3$, then $\mathrm{GL}(m)$ $L$-functions fall marginally short outside $\mathcal{S}_{\text{poly}}$ since we know \cite{Rudnick1996} that they satisfy all four conditions \ref{finiteorder}--\ref{pEulerproduct} but with $\theta_L\leq1/2-1/(m^2+1)$.
	  Assuming the Ramanujan Hypothesis here places them automatically	in $\mathcal{S}_\text{poly}$ since then $\theta_L=0$.

	Given $L\in\mathcal{S}$  we will always be considering from here on the single-valued analytic branch of $\log L(s)$ defined on the half-plane $\sigma>1$ such that $\lim_{\sigma\to+\infty}\log L(\sigma)=0$ (by assumptions the constant Dirichlet coefficient of $L(s)$ is $1$).
	We next set 
	\[
	\Omega_L:=\set*{s:|t|\geq C_L}\setminus\bigcup_{\rho_{L}}(-\infty+i\mathrm{Im}\rho_{L},\rho_{L}],
	\]
	where $C_L>0$ is sufficiently large so that the {\it trivial zeros} of $L(s)$, i.e. its zeros that nullify the poles of the gamma factors in \ref{functional equation}, are excluded.
	Then we can extend analytically $\log L(s)$ inside $\Omega_L$ to be the value obtained from $\log L(2)$ by continuous variation along the line segments $[2,2+it]$ and $[2+it,s]$.
	\section{Extreme values on the critical line: Proof of Theorem \ref{thm:ourresult0}}\label{firsttheorem}
In this section and the succeeding one we consider a general, at first, collection of $L$-functions $L_h$, $h\leq H$, from $\mathcal{S}_{\text{poly}}$ which satisfy a stronger version of \eqref{PNT0}
		\begin{align}\label{PNT}
		\sum_{p\leq x}{a_{L_g}(p)\overline{a_{L_h}(p)}}=\left\{
		\begin{array}{ll}
			\kappa_{h}\frac{x}{\log x}+O\parentheses*{\frac{x}{(\log x)^2}},&g=h,\\
			O\parentheses*{\frac{x}{(\log x)^2}},&g\neq h,
		\end{array}
		\right.
	\end{align}
 for suitable $\kappa_{h}>0$.
	We then set
	\[
	L(s):=\prod_{h\leq H}L_h(s)
	\]
and we readily see 	from \eqref{convolution} and \eqref{PNT}  that
		\begin{align}\label{PNTP}
	\sum_{p\leq x}{|a_{L}(p)|^2}=\kappa_L\frac{x}{\log x}+O\parentheses*{\frac{x}{(\log x)^2}},\quad \kappa_L:=\sum_{h\leq H}\kappa_h.
	\end{align}
	Moreover, we require from the Dirichlet coefficients to grow by means of the following bound
	\begin{align}\label{fourthmoment}
\sum_{h\leq H} \sum_{p\leq x}{|a_{L_h}(p)|^4}{p}^{-1}\ll\log\log x.
	\end{align}
	From the discussion in the previous section we observe that the Ramanujan Hypothesis in $\mathcal{S}_{\text{poly}}$ and Merten's theorem implies \eqref{fourthmoment}.
	In general, we could work with different assumptions on the average growth of the Dirichlet coefficients, like  Hypothesis H of Rudnick and Sarnak \cite{Rudnick1996}, and end up with the same results by arguing in a similar manner.
It has been favored here to follow the approach from \cite{Heap2024} where the fourth moment bound is employed since it already covers the case of all elements of $\mathcal{S}_0$, while any other conditional result on the critical line would require first and foremost unproved assumptions on the distribution of nontrivial zeros.
		\begin{lem}\label{SpecialPNT}
			Relation \eqref{PNT} holds true for any pair $L_1, L_2\in\mathcal{S}_0$ with $\kappa_{L_1}=\kappa_{L_2}=1$.
		\end{lem}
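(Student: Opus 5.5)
The plan is to split into cases according to the types of $L_1,L_2\in\mathcal{S}_0$ (Riemann zeta-function, primitive Dirichlet $L$-functions, $L$-functions of primitive holomorphic cusp forms on $\mathrm{GL}(2)$), and in each case reduce \eqref{PNT} to a prime number theorem for a suitable $L$-function (or Rankin--Selberg $L$-function) with de la Vallée Poussin error term. The error $O(x/(\log x)^2)$ is much weaker than what is actually available, $O(x\exp(-c\sqrt{\log x}))$, so partial summation from a $\theta$- or $\psi$-type statement suffices in every case.

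\emph{Dirichlet characters.} For $\chi_1,\chi_2$ primitive with $a_{L_h}(p)=\chi_h(p)$, we have $\chi_1(p)\overline{\chi_2(p)}=\psi(p)$, where $\psi=\chi_1\overline{\chi_2}$ is a character modulo $\mathrm{lcm}(q_1,q_2)$. If $L_1=L_2$, then $\psi$ is principal and $\sum_{p\le x}|\chi_1(p)|^2=\pi(x)-\omega(q_1)=x/\log x+O(x/(\log x)^2)$, giving $\kappa=1$. If $L_1\neq L_2$, then $\psi$ is non-principal, since distinct primitive characters induce distinct characters modulo the lcm. The Siegel--Walfisz or prime number theorem for the fixed modulus then gives $\sum_{p\le x}\psi(p)=O(x\exp(-c\sqrt{\log x}))$. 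Here $\zeta$ is just the trivial character.

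\emph{Cusp forms.} For primitive holomorphic forms $f,g$ normalised so that $\lambda_f(p)$ are real, the standard input is the prime number theorem for the Rankin--Selberg $L$-function $L(s,f\times\bar g)$. Its Dirichlet coefficient at $p$ is $\lambda_f(p)\overline{\lambda_g(p)}$, up to the $\Lambda$-weights and higher prime powers, which contribute $O(\sqrt{x})$ by Deligne's bound. When $f=g$ it has a simple pole at $s=1$, giving $\sum_{p\le x}|\lambda_f(p)|^2=\pi(x)+O(x e^{-c\sqrt{\log x}})$, so $\kappa=1$. When $f\neq g$ it is entire and nonvanishing on $\mathrm{Re}\,s=1$ with a standard zero-free region (Moreno, or Liu--Ye, Liu--Wang--Ye), which gives $o$ with the required saving. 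The mixed cases use $L(s,f\otimes\chi)$, a primitive $\mathrm{GL}(2)$ $L$-function twisted by a character, or $L(s,f)$ itself for $\zeta$. These are entire and nonvanishing on $\mathrm{Re}\,s=1$ with a de la Vallée Poussin zero-free region (Siegel zeros cannot occur for fixed self-dual-free cases, and in any case for fixed forms an exceptional zero only affects constants). So $\sum_{p\le x}\lambda_f(p)\overline{\chi(p)}=O(x/(\log x)^2)$.

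The main obstacle is the cusp-form off-diagonal case and the mixed case. One must justify citing the Rankin--Selberg prime number theorem with an explicit zero-free region uniformly valid for $L(s,f\times \bar g)$, including the possibly exceptional real zero when $g$ is a twist-related form. For fixed $f,g$ this only needs nonvanishing on $\mathrm{Re}\,s=1$ together with a standard region, which I would cite from the literature (e.g.\ Liu, Wang and Ye) rather than prove. Finally, I will check the passage from $\sum\Lambda(n)a(n)$ to $\sum_p a(p)$ by partial summation, discarding prime powers via Ramanujan.
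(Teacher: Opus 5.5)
Your route is the paper's. In each case you invoke a prime number theorem with de la Vall\'ee Poussin error for $L(s,\pi_1\times\tilde\pi_2)$, which is a Dirichlet $L$-function, a twist $L(s,f\otimes\overline{\chi})$, or a $\mathrm{GL}(2)\times\mathrm{GL}(2)$ Rankin--Selberg function. You then discard prime powers and apply partial summation. The paper handles all pairs at once by quoting Liu--Ye's Rankin--Selberg prime number theorem for $\sum_{n\le x}\Lambda(n)a_{L_1}(n)\overline{a_{L_2}(n)}$. Your explicit Dirichlet-character argument, via non-principality of $\chi_1\overline{\chi_2}$ modulo the lcm, is a correct special case of it.

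The gap is that you only treat holomorphic cusp forms. In this paper $\mathcal{S}_0$ also contains the $L$-functions of primitive Maa{\ss} cusp forms. That is why $\mathcal{S}_{\text{poly}}$ only requires $\theta_L<1/4$, why Kim--Sarnak's $\theta_L\le 7/64$ is quoted, and why Theorem~\ref{thm:ourresult0} carries a Ramanujan exponent $\theta$. For these forms your step of discarding prime powers ``via Deligne/Ramanujan'' is not available. The prime number theorems you cite do hold for Maa{\ss} forms, so only the prime-power step needs repair. Using $\abs{a_{L_i}(p^\ell)}\le(\ell+1)p^{\ell\theta}$ with $\theta\le 7/64$, you get
\[
\sum_{\substack{p^\ell\le x\\ \ell\ge2}}\frac{\abs{a_{L_1}(p^\ell)a_{L_2}(p^\ell)}}{\ell}\ll\sum_{2\le\ell\le 2\log x}\ell\, x^{1/\ell+2\theta}\ll x^{1/2+2\theta}=o\parentheses*{\frac{x}{(\log x)^2}},
\]
and the same bound (up to a factor $\log x$) holds for the $\Lambda$-weighted prime powers coming from $-L'/L$.

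The strict inequality $\theta<1/4$ is genuinely needed here. The crude bound $a_{L_i}(p^\ell)\ll p^{\ell/4}$ alone would only bound the prime squares by $\sum_{p\le\sqrt{x}}p\asymp x/\log x$, which is as large as the main term. So you must use an actual exponent such as $7/64$, not just the boundary value.
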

		\begin{proof}
			Observe that
		\begin{align}\label{initialPNT}
			\sum_{p\leq x}{a_{L_1}(p)\overline{a_{L_2}(p)}}=\sum_{n\leq x}\frac{\Lambda(n)a_{L_1}(n)\overline{a_{L_2}(n)}}{\log n}-\sum_{\substack{p^\ell\leq x\\\ell\geq2}}\frac{a_{L_1}(p^\ell)\overline{a_{L_2}(p^\ell)}}{\ell},
		\end{align}
		where $\Lambda(n)$, $n\geq1$, is the von Mangoldt function.
		By definition (see \ref{pEulerproduct})
		\[
		a_{L_i}(p^\ell)=\sum_{\substack{k_1,\dots,k_{\partial_{L_i}}\geq0\\k_1+\dots+k_{\partial_{L_i}}=\ell}}a_{L_i,1}(p)^{k_1}\dots a_{L_i,\partial_{L_i}}(p)^{k_{\partial_{L_i}}}\ll p^{\ell/4}
		\]
		for any prime $p$, integer $\ell\geq1$ and $i=1,2$.
		Therefore, the second sum in the right-hand side of \eqref{initialPNT} is bounded by
		\[
		\sum_{\substack{p^\ell\leq x\\\ell\leq\log x}}\frac{p^{\ell/2}}{\ell}\ll\sqrt{x}\log\log x.
		\]
		On the other hand, the first sum in the right-hand side of \eqref{initialPNT} produces the asymptotics in \eqref{PNT} by partial summation and relation
	\begin{align*}
\sum_{n\leq x}{\Lambda(n)a_{L_1}(n)\overline{a_{L_2}(n)}}=\left\{
	\begin{array}{ll}
	x+O\parentheses*{x\exp(-c\sqrt{\log x}},&L_1=L_2,\\
		O\parentheses*{x\exp(-c\sqrt{\log x}},&L_1\neq L_2,
	\end{array}
	\right.
\end{align*}
which has been shown in \cite[Theorem 2.3]{Liu2007} for some $c>0$.
		\end{proof}
	We move on into constructing the Dirichlet polynomial that is usually utilized to pick out large values of $L(s)$. 
	We refer to \cite[Lemma 2.1]{Aistleitner2017} and in particular to \cite[p. 6366]{Heap2024} from where the aforementioned polynomial is borrowed.
	 We set $X:=T^\Delta$ for some fixed $0<\Delta<1$ and $\mathscr{L}:=\sqrt{\frac{1}{\kappa_L\lambda}\log X\log\log X}$ for some fixed $\lambda>1$.
We then consider the multiplicative function $r(n)$ to be supported only in square-free numbers and be defined by
	\[
	r(p):={a_L(p)}\frac{\mathscr{L}}{\sqrt{p}\log p},\quad p\in\mathcal{P}:=\set*{\mathscr{L}^2\leq p\leq\exp((\log \mathscr{L})^2):\max_{h\leq H}|a_{L_h}(p)|\ll(\log p)^{1-\epsilon}},
	\]
	and by $r(p)=0$ elsewhere.
	The restriction to primes where $a_{L_h}(p)$ grow at most logarithmically was imposed here only because we have not included the Ramanujan Hypothesis as a condition that the elements of $\mathcal{S}_{\text{poly}}$ should meet.
	The choice of $\mathcal{P}$ in particular ensures that $r(p)=o(1)=\max_{h\leq H}|a_{L_h}(p)r(p)|p^{-1/2}$ uniformly for $p\in\mathcal{P}$ as $T\to+\infty$ which is essential for later computations (see Lemma \ref{resonator properties}).
	Other than that, the resonating polynomial is defined as usual to be
	\begin{align}\label{resonator0}
		R(s)=\sum_{n\leq X}{r(n)}{n^{1/2-s}}
	\end{align}
	and we can show, under the assumptions \eqref{PNT} and \eqref{fourthmoment}, the following propositions.
		\begin{prop}\label{essprop}
	If $L_h\in\mathcal{S}_0$, for some $h\leq H$, then for any sufficiently small but fixed $\Delta>0$, any fixed $\lambda>1$ and for any sufficiently large $T\gg1$ we have that
		\[
		\frac{1}{T}\int_\mathbb{R}\abs*{L_h\parentheses*{{\textstyle{\frac{1}{2}+it}}}}^{2}\abs*{R\parentheses*{{\textstyle{\frac{1}{2}+it}}}}^{2}\mathrm{d}t\leq  \exp\parentheses*{\parentheses{2+o(1)}\sqrt{\frac{\Delta\log T}{\kappa_L\lambda\log\log T}}}\prod_{p}\parentheses*{1+|r(p)|^2}.
		\]
		If $L_h$ is a Dirichlet $L$-function, then any $\Delta<{17}/{33}$ is admissible. 
		Otherwise, we can take $\Delta<\frac{ 1/2-\theta}{3+\theta}$ where $\theta$ is an admissible bound towards the Ramanujan Hypothesis for cusp forms on $G L(2)$ over $\mathbb{Q}$.
	\end{prop}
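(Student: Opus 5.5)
Since the integral runs over all of $\mathbb{R}$, I first need a smooth weight: I would replace it by $\int_{\mathbb{R}}|L_h(\tfrac12+it)|^2|R(\tfrac12+it)|^2\Phi(t/T)\,\mathrm{d}t$ with a nonnegative Schwartz function $\Phi\geq1$ on $[1,2]$ and compactly supported Fourier transform (Heap and Li work this way). The plan is to reduce everything to a twisted second moment formula. I expand $|R|^2=\sum_{m,n\leq X}r(m)\overline{r(n)}(n/m)^{it}$, which is legitimate since $r$ is real up to the phases of $a_L(p)$ and $n^{1/2-s}$ at $s=\tfrac12+it$ gives $n^{-it}$. I then invoke the known asymptotic for the twisted second moment of $L_h$ with Dirichlet polynomial twists of length $X=T^{\Delta}$:
\[
\int|L_h(\tfrac12+it)|^2\Big(\frac{m}{n}\Big)^{it}\Phi(t/T)\,\mathrm{d}t=\frac{T\,\hat\Phi(0)}{\sqrt{mn}}\,\mathcal{M}_h\Big(\frac{m}{(m,n)},\frac{n}{(m,n)};\log T\Big)\sqrt{(m,n)^{\cdot}}+\text{error}.
\]
For Dirichlet $L$-functions this is the Balasubramanian--Conrey--Heath-Brown type formula, valid for twists of length up to $T^{17/33}$ via the Conrey / Bettin--Chandee--Radziwiłł style extensions. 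For $\mathrm{GL}(2)$ cusp forms it is the twisted second moment via the shifted convolution problem, which is valid up to $T^{(1/2-\theta)/(3+\theta)}$. These are exactly the admissible ranges for $\Delta$ in the statement. I would cite \cite[Proposition 3]{Heap2024} for both. The main term there has the shape $\sum_{m,n}\frac{r(m)\overline{r(n)}}{[m,n]}\,a_{L_h}\big(\tfrac{m}{(m,n)}\big)\overline{a_{L_h}\big(\tfrac{n}{(m,n)}\big)}\cdot\big(\log\tfrac{T}{[m,n]}\text{-type factors}\big)\cdot(\ldots)$. The log factors are at most $(\log T)^{O(1)}=\exp(o(\sqrt{\log T/\log\log T}))$ and are harmless.

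The second step evaluates the main term. Since $r$ is supported on squarefree integers, I write $m=ak$, $n=bk$ with $(a,b)=1$ and use multiplicativity. The diagonal part over $k$ contributes at most $\prod_p(1+|r(p)|^2)$. Removing the constraint $m,n\leq X$ by positivity (or Rankin's trick) lets me bound the off-diagonal part by
\[
\prod_{p\in\mathcal{P}}\Big(1+|r(p)|^2+2\,\mathrm{Re}\,\frac{r(p)\overline{a_{L_h}(p)}}{\sqrt p}+O\big(|r(p)|^2|a_{L_h}(p)|^2/p\big)\Big)
\le\prod_p(1+|r(p)|^2)\exp\Big(\sum_{p\in\mathcal{P}}\frac{2\,\mathrm{Re}\,r(p)\overline{a_{L_h}(p)}/\sqrt p+o(1)|r(p)|^2\ldots}{1+|r(p)|^2}\Big).
\]
Substituting $r(p)=a_L(p)\mathscr{L}/(\sqrt p\log p)$, the linear term becomes $2\mathscr{L}\sum_{p\in\mathcal{P}}a_L(p)\overline{a_{L_h}(p)}/(p\log p)$. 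By \eqref{convolution} and \eqref{PNT} (through Lemma \ref{SpecialPNT}) together with partial summation over $\mathscr{L}^2\leq p\leq\exp((\log\mathscr{L})^2)$, this equals $2\mathscr{L}\kappa_h(1+o(1))/\log\mathscr{L}$. Here $\kappa_h=1$ for $\mathcal{S}_0$, and the cross terms are smaller. Since $\mathscr{L}/\log\mathscr{L}\sim\sqrt{\tfrac{\Delta\log T}{\kappa_L\lambda\log\log T}}\cdot$ constant, I get the exponent $(2+o(1))\sqrt{\Delta\log T/(\kappa_L\lambda\log\log T)}$. Any leftover sums are $o(\mathscr{L}/\log\mathscr{L})$: those over primes outside $\mathcal{P}$ are removed by the definition of $\mathcal{P}$, and the quadratic terms are controlled by \eqref{fourthmoment} and the bound $r(p)=o(1)$.

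The third step handles the error terms of the twisted moment formula. They are bounded by $T^{1-\delta}\sum_{m,n}|r(m)r(n)|$, which by Cauchy--Schwarz is at most $T^{1-\delta}X\sum_n|r(n)|^2\le T^{1-\delta+\Delta}\prod_p(1+|r(p)|^2)$. This is negligible exactly in the admissible range of $\Delta$, taking $\Delta$ small enough that the power saving of the cited formula dominates.

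I expect the main obstacle to be the $\mathrm{GL}(2)$ twisted second moment: matching its main term's arithmetic factor to the Euler-product computation above uniformly for squarefree twists, and confirming that the error term is admissible up to $X=T^{(1/2-\theta)/(3+\theta)}$. I would lean entirely on \cite[Proposition 3]{Heap2024} for this, and check only that the extra restriction in $\mathcal{P}$ (primes with $|a_{L_h}(p)|\ll(\log p)^{1-\epsilon}$) does not alter their argument, since it only shrinks the support of $r$.
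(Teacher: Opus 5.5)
Your proposal follows essentially the same route as the paper, which simply invokes \cite[Proposition 3]{Heap2024} for the Euler-product bound $\prod_p\bigl(1+|r(p)|^2+(2+o(1))\mathrm{Re}(\overline{r(p)}a_{L_h}(p))/\sqrt{p}\bigr)$, factors out $\prod_p(1+|r(p)|^2)$, and evaluates the remaining exponent via Lemma \ref{SpecialPNT} and (A.1), (A.3). One bookkeeping correction: the linear term is $(\kappa_h+o(1))\mathscr{L}/\log\mathscr{L}$, not twice that, because $\sum_{p\geq\mathscr{L}^2}|a_{L_h}(p)|^2/(p\log p)\sim\kappa_h/(2\log\mathscr{L})$; the constant $2$ then comes from $\mathscr{L}/\log\mathscr{L}\sim 2\sqrt{\log X/(\kappa_L\lambda\log\log X)}$ (and note that your smoothing, like the paper's proof, really bounds the integral over $[T,2T]$, since the integral over $\mathbb{R}$ diverges).
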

	\begin{proof}
		This is  \cite[Proposition 3]{Heap2024} where it is shown that
		\begin{align*}
		\frac{1}{T}\int_T^{2T}\abs{L_h\parentheses{{\textstyle{\frac{1}{2}+it}}}}^{2}\abs{R\parentheses{{\textstyle{\frac{1}{2}+it}}}}^{2}\mathrm{d}t\ll \prod_{p}\parentheses*{1+|r(p)|^2+(2+o(1))\frac{\mathrm{Re}\parentheses{\overline{r(p)}a_{L_h}(p)}}{\sqrt{p}}}.
		\end{align*}
		It makes no difference that we do not require for the rest of the $L_g$, $g\neq h$, to be $\mathrm{GL}(m)$ $L$-functions because we assume the same, sufficient for the proof, conditions for the elements of $\mathcal{S}_{\text{poly}}$.
	The right-hand side of the above relation is equal to
		\[
		\brackets*{\prod_{p}\parentheses*{1+|r(p)|^2}}\prod_{p}\parentheses*{1+(2+o(1))\frac{\mathrm{Re}\parentheses{\overline{r(p)}a_{L_h}(p)}}{\sqrt{p}\parentheses*{1+|r(p)|^2}}},
		\]
		where the second product is bounded from above by the desired factor in the statement of the proposition as can be seen from \ref{resonator properties(i)}, \ref{resonator properties(iii)} and Lemma \ref{SpecialPNT}.
	\end{proof}
	\begin{prop}\label{keyprop}
		Let $\pi\subseteq\set{1,\dots,H}$ and $q=u/v\in\mathbb{Q}_{>0}$.
	If $L_\pi(s):=\prod_{h\in\pi}L_h(s)$ with $\kappa_\pi:=\sum_{h\in\pi}\kappa_{h}$,
	then for fixed $0<\Delta<1/2$ and $\lambda>v$, and for any sufficiently large $T\gg1$
	\[
	\frac{1}{T}\int_\mathbb{R}\abs{L_\pi\parentheses{{\textstyle{\frac{1}{2}+it}}}}^{q}\abs{R\parentheses{{\textstyle{\frac{1}{2}+it}}}}^{2}\mathrm{d}t\geq \exp\parentheses*{\parentheses{q\kappa_\pi+o(1)}\sqrt{\frac{\Delta\log T}{\kappa_L\lambda\log\log T}}}\prod_{p}\parentheses*{1+|r(p)|^2}.
	\]
	\end{prop}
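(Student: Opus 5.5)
The plan is to adapt Heath-Brown's lower-bound method for fractional moments, in the twisted form already used for the first-moment estimate \eqref{Hoelderlimit}. The key tool is an auxiliary Dirichlet polynomial that mimics a fractional power of $L_\pi$, resonates with $R$, and whose twisted moments are computable by the mean value theorem for Dirichlet polynomials alone. This avoids any information on the zeros of the $L_h$.

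Write $q=u/v$. Let $K(s)=\sum_{n\leq Y}a_{L_\pi^{1/v}}(n)n^{-s}$, where $a_{L_\pi^{1/v}}$ is the multiplicative function with $a_{L_\pi^{1/v}}(p)=a_{L_\pi}(p)/v$ (restricted to square-free $n$ supported on $\mathcal{P}$ if convenient), and $Y=T^{\eta}$ with $\eta>0$ small so that $Y^{v}X^2\leq T^{1-\epsilon}$. The idea is $K^{v}\approx L_\pi$ and $|K|^{u}\approx |L_\pi|^{q}$ on average against $|R|^2$. I would then apply H\"older's inequality in the form
\[
\abs*{\int L_\pi\overline{K^{v-u}}\,|R|^2}\leq\parentheses*{\int |L_\pi|^{q}|R|^2}^{1/v}\parentheses*{\int |L_\pi|^{\frac{v-q}{v-1}}|K|^{\frac{v(v-u)}{v-1}}|R|^2}^{\frac{v-1}{v}}.
\]
The exponents are chosen so that the $L_\pi$-power in the second factor can itself be controlled by a further H\"older step against $\int|K|^{2v}|R|^2$ together with the first factor; in Heath-Brown's argument it is exactly the rationality of $q$ that makes this bootstrapping close. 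The outcome I aim for is
\[
\int |L_\pi|^q|R|^2\geq \abs*{\int L_\pi\overline{K^{v-u}}|R|^2}^{A}\prod_{j}\parentheses*{\int |K|^{2m_j}|R|^2}^{-B_j},
\]
with exponents $A,B_j>0$ and integers $m_j\leq v$ depending only on $u,v$, and with the exponents balancing at the level of Euler factors.

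Both remaining quantities are then evaluated prime by prime. For the numerator, $\int_T^{2T} L_\pi(\tfrac12+it)\overline{K^{v-u}R}R\,\mathrm{d}t$ involves only a first moment of $L_\pi$ against a Dirichlet polynomial of length $Y^{v-u}X^{2}<T^{1-\epsilon}$. This is handled exactly as in \cite[Section 4]{Heap2024}: shift to $\mathrm{Re}\,s>1$, pick out the diagonal $m=kn$, and bound the horizontal segments and the left line using only the convexity bound \eqref{ordergrowth} together with $\Delta<1/2$. The diagonal equals $T\prod_p(1+|r(p)|^2+\mathrm{Re}(\overline{r(p)}a_{L_\pi}(p))p^{-1/2}\cdot c+\dots)$ times a factor involving $K$. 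For the denominators, $\int|K|^{2m}|R|^2$ is a pure mean square of a Dirichlet polynomial of length $<T$, so the Montgomery--Vaughan mean value theorem gives $T\sum_n|\sum_{d}\dots|^2$, again an Euler product. In all of these, the primes outside $\mathcal{P}$ contribute matching factors that cancel between numerator and denominators, up to $\exp(o(\sqrt{\log T/\log\log T}))$ by \eqref{fourthmoment}. The primes in $\mathcal{P}$ contribute a factor $1+|r(p)|^2$ times a correction $1+\gamma\,\mathrm{Re}(\overline{r(p)}a_{L_\pi}(p))p^{-1/2}/(1+|r(p)|^2)$, with $\gamma=q$ after combining exponents. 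Summing over $\mathcal{P}$ with \eqref{PNT}, \eqref{PNTP} and the definition of $\mathscr{L}$ (as in the proof of Proposition \ref{essprop}), the correction becomes $\exp((q\kappa_\pi+o(1))\sqrt{\Delta\log T/(\kappa_L\lambda\log\log T)})$.

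The main obstacle is the H\"older bookkeeping. The exponents must combine so that the $|r(p)|^2$-parts of all Euler products cancel to give exactly $\prod_p(1+|r(p)|^2)$, while the cross terms sum to $q\kappa_\pi$ rather than a smaller multiple. The $v$-th powers of $K$ are where $\lambda>v$ should enter. They inflate the effective resonator coefficients by a factor $v$, and one needs $v\,r(p)a(p)p^{-1/2}=o(1)$ and the truncation $n\leq X$ (Rankin's trick with the length of $\mathcal{P}$) to remain harmless; this is the step I would check first. Failing an exact balance, I would fall back on choosing $K$ with coefficients $a_{L_\pi}(p)/v$ only on $\mathcal{P}$ and $1$ elsewhere, which makes the off-$\mathcal{P}$ bookkeeping trivial.
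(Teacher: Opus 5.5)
\textbf{There is a genuine gap, and it sits in the step you call bookkeeping.} Your treatment of the twisted first moment is not the problem. In the case that matters, $q<1$ (the proposition is applied with $q=2/(H-1)$), your second H\"older factor contains $|L_\pi|^{(v-q)/(v-1)}$, and $(v-q)/(v-1)>1>q$. A further H\"older step can only interpolate between integrals whose $|L_\pi|$-exponents you already control. The integrals $\int|L_\pi|^q|R|^2$ and $\int|K|^{2m}|R|^2$ have exponents $q$ and $0$, so such a step can only reach exponents in $[0,q]$, and this factor cannot be removed.

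In fact no chain of H\"older inequalities can give your target inequality with $A,B_j>0$. Such inequalities are homogeneous in $L_\pi$ and in the measure $|R|^2\,\mathrm{d}t$. Homogeneity in $L_\pi$ forces $A=q$, and homogeneity in the measure then forces $\sum_jB_j=q-1<0$. Conceptually, a large twisted first moment does not stop $\int|L_\pi|^q|R|^2$ from being small, since the mass could sit on a tiny set. To exclude this you would need an upper bound for a twisted moment of $|L_\pi|$ of order $c>1$. Splitting $|L_\pi|^c=\prod_{h\in\pi}|L_h|^c$ would then require moments of the individual $L_h$ of order $c|\pi|>2$. These are unavailable for $\mathrm{GL}(2)$ and for products, and this is exactly the barrier (\eqref{Hoelderlimit} only works for $q\geq1$) that the proposition is designed to bypass. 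Also, in Heath-Brown's method the rationality of $q$ is not used for a H\"older bootstrap.

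\textbf{The missing idea is Heath-Brown's convexity argument, which uses no moment of $L_\pi$ at all.} It needs only polynomial growth and absolute convergence far to the right.
\begin{enumerate}
\item The paper takes $s_\pi=P_1P_2$, the truncation at $X$ of the Dirichlet series of $L_\pi^q$, and sets $d_\pi=L_\pi^u-s_\pi^v$.
\item It applies Gabriel's convexity theorem with exponent $1/v$ to $L_\pi^uR^{2v}$ and to $d_\pi R^{2v}$, each times a Gaussian; this is where $q=u/v$ enters. The result is Lemmas~\ref{Gabriel} and~\ref{lowerboundapprox2}, the latter with the saving $T^{(1/2-\sigma)(1-1/\beta)\Delta/v}$.
\item The pure Dirichlet-polynomial integral $S_\pi(\sigma,T)$ is shown to lie between $\mathscr{V}_\pi(\sigma)$ and $\mathscr{V}_\pi(\sigma)(\log T)^{O(1)}$ for $\sigma$ near $1/2$ (Lemmas~\ref{lowerboundapprox1} and~\ref{upperboundapprox1}). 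The lower bound is a first-moment computation for $s_\pi$, not for $L_\pi$.
\item At $\sigma_T=1/2+B\log\log T/\log T$ one has $\mathscr{V}_\pi(\sigma_T)=(\log T)^{-B\Delta/\lambda+o(1)}\mathscr{V}_\pi(\frac12)$. The hypothesis $\lambda>v$ makes this decay slower than the convexity saving $(\log T)^{-B\Delta(1-1/\beta)/v}$.
\item This forces \eqref{interlude}, so after a case split on whether $D_\pi(\frac12,T)\le\mathscr{V}_\pi(\frac12)^{1/2}$, the $d_\pi$ contribution is negligible. One obtains $I_\pi(\frac12,T)\gg(\log T)^{-O(1)}\mathscr{V}_\pi(\frac12)$, and the logarithmic loss is absorbed into the $o(1)$ in the exponent.
\end{enumerate}
So the condition $\lambda>v$ reflects that $d_\pi$ only deletes the terms $n\le X$ rather than $n\le X^v$. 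It does not come from an inflation of the resonator coefficients.
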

	We postpone its proof  in the next section and conclude  with the proof of Theorem \ref{thm:ourresult0}.
	\begin{proof}[Proof of Theorem \ref{thm:ourresult0}]
		By Lemma \ref{SpecialPNT} we know that $L_h$, $h\leq H$, satisfy \eqref{PNT} with $\kappa_h=1$.
		The same holds true with respect to \eqref{fourthmoment} (see \cite[(2.10)]{Heap2024}).
		Set
		\[
			Z:=\sqrt{\frac{D\log T}{H(H-1)\log\log T}}\quad\text{ and }\quad\tilde{Z}:=\sqrt{\frac{\Delta\log T}{H\lambda\log\log T}},
		\]
			where $D>0$ is any fixed number in the admissible range while $\Delta$ and $\lambda$ will soon be fixed.
			The theorem will follow  if we can show that for any $T\gg1$, there is $t\in[T,2T]$ with
		\begin{align*}
			\abs{L\parentheses{{\textstyle{\frac{1}{2}+it}}}}^{2/(H-1)}> \exp\parentheses*{\parentheses*{\frac{2}{H-1}+o(1)}Z}\sum_{h\leq H}\prod_{g\neq h}\abs{L_g\parentheses{{\textstyle{\frac{1}{2}+it}}}}^{2/(H-1)}.
		\end{align*}
		A sufficient condition for the above inequality to hold is to prove that, for any $h\leq H$,
		\begin{align*}
			\begin{split}
				\mathscr{I}_h:=\frac{\int_T^{2T}	\abs{L\parentheses{{\textstyle{\frac{1}{2}+it}}}}^{2/(H-1)}\abs{R\parentheses{{\textstyle{\frac{1}{2}+it}}}}^{2}\mathrm{d}t}{\int_{T}^{2T}\abs{R\parentheses{{\textstyle{\frac{1}{2}+it}}}}^{2}\prod_{g\neq h}\abs{L_g\parentheses{{\textstyle{\frac{1}{2}+it}}}}^{2/(H-1)}\mathrm{d}t}
				>\exp\parentheses*{\parentheses*{\frac{2}{H-1}+o(1)}Z}.
			\end{split}
		\end{align*}
		Towards that goal we fix $\Delta\to D^+$ in the admissible range and $\lambda\to(H-1)^{+}$ so that $\frac{\Delta}{\lambda}>\frac{D}{H-1}$.
		Proposition  \ref{keyprop} yields then that
		\[
		\int_T^{2T}\abs{L\parentheses{{\textstyle{\frac{1}{2}+it}}}}^{2/(H-1)}\abs{R\parentheses{{\textstyle{\frac{1}{2}+it}}}}^{2}\mathrm{d}t\geq T \exp\parentheses*{\parentheses*{\frac{2H}{H-1}+o(1)}\tilde{Z}}\prod_{p}\parentheses*{1+|r(p)|^2},
		\]
		since $\kappa_L=H$.
		On the other hand, H\"older's inequality and Proposition \ref{essprop} imply that
		\begin{align*}
\int_{T}^{2T}\abs{R\parentheses{{\textstyle{\frac{1}{2}+it}}}}^{2}\prod_{g\neq h}\abs{L_g\parentheses{{\textstyle{\frac{1}{2}+it}}}}^{2/(H-1)}\mathrm{d}t
		&\leq\prod_{g\neq h}\parentheses*{\int_{T}^{2T}\abs{L_g\parentheses{{\textstyle{\frac{1}{2}+it}}}}^{2}\abs{R\parentheses{{\textstyle{\frac{1}{2}+it}}}}^{2}\mathrm{d}t}^{1/(H-1)}\\
		&\leq T\exp\parentheses*{\parentheses*{{2}+o(1)}\tilde{Z}}\prod_{p}\parentheses*{1+|r(p)|^2}.
		\end{align*}
		Therefore,  
		\[
		\mathscr{I}_h\geq \exp\parentheses*{\parentheses*{\frac{2}{H-1}+o(1)}\tilde{Z}}> \exp\parentheses*{\parentheses*{\frac{2}{H-1}+o(1)}{Z}}.
		\]
	\end{proof}
	
	\section{Proof of Proposition \ref{keyprop}}\label{seckeyprop}
	We  start by setting up the necessary background.
	We have that
	\[
	\log L_{\pi}(s)=\sum_{h\in\pi}\sum_{p}\sum_{j\leq\partial_{L_h}}\sum_{k\geq1}{a_{L_h,j}(p)^k}{p^{-ks}}=\sum_{h\in\pi}\sum_{p}\sum_{j\leq\partial_{L_h}}\log\parentheses*{1-{a_{L_h,j}(p)}{p^{-s}}}^{-1},\quad\sigma>1,
	\]
	where the branch of $\log (1-z)$, $|z|<1$, is the one that vanishes at $z=0$ and which is defined in any other point $w$ of the unit disk by continuous variation along the segment $[0,w]$.
	Hence, for $q>0$ we can also define a branch of $L_{\pi}(s)^q$ by
	\begin{align*}
		L_\pi(s)^{q}&=\exp(q \log L_\pi(s))\\
		&=\prod_{p}\prod_{h\in\pi}\prod_{j\leq\partial_{L_h}}\parentheses*{1-{a_{L_h,j}(p)}{p^{-s}}}^{-q}\\
		&=\prod_{p\in\mathcal{P}}\prod_{p\notin\mathcal{P}}\prod_{h\in\pi}\prod_{j\leq\partial_{L_h}}\sum_{k\geq0}\frac{\Gamma(q+k)}{k!\Gamma(q)}{a_{L_h,j}(p)^k}{p^{-ks}}\\
		&=:\sum_{n\geq1}{c_{1}(n)}{n^{-s}}\sum_{n\geq1}{c_{2}(n)}{n^{-s}}.
	\end{align*}
	Here $c_{1}(n)$ and $c_2(n)$, $n\geq1$, are two multiplicative functions which are respectively supported on positive integers divisible only by primes $p\in\mathcal{P}$ and only by primes $p\notin\mathcal{P}$. 
	For any prime $p\in\mathcal{P}$ and integer $\nu\geq1$  we have that $c_2(p^\nu)=0$ and
	\begin{align}\label{defcoef}
		c_{1}(p^\nu)=\sum\cdots\sum\nolimits^\nu\prod_{h\in\pi}\prod_{j_h\leq\partial_{L_h}}\tau_q(p^{k_{j_h}})a_{L_h,j_h}(p)^{k_{j_h}},\quad \tau_q(p^k):=\frac{\Gamma(q+k)}{k!\Gamma(q)},
	\end{align}
	where the multisum $\sum\cdots\sum^\nu$ is over all tuples $((k_{j_h})_{j_h\leq\partial_{L_h}})_{h\in\pi}$ of non-negative integers such that $\nu=\sum_{h\in\pi}\sum_{j_h\leq\partial_{L_h}}k_{j_h}$.
	The function $\tau_q(n)$ is the generalization of the divisor function $\tau_2(n)$ and it satisfies $0<\tau_q(n)\ll_q n^\epsilon$ for any integer $n\geq1$ and real $q>0$.
	We then have the following relations
	\begin{align}\label{fractcoeffic}
		c_1(p)=q\sum_{h\in\pi}a_{L_h}(p)\quad\text{ and }\quad c_1(p^\nu)\ll p^{\nu\theta_\pi},
	\end{align}
	where $\theta_\pi$ is the center of the interval $\parentheses*{\max_{h\in\pi}\theta_{L_h},{1}/{4}}.$
	In fact, the construction of $\mathcal{P}$ yields near logarithmic bounds for $c_1(p^\nu)$ but it makes no difference in the succeeding arguments.
	If $p\notin\mathcal{P}$ and $\nu\geq1$, then $c_1(p^\nu)=0$ and \eqref{defcoef}--\eqref{fractcoeffic} hold for $c_2(p^\nu)$ instead.
	
We introduce next the positive weight function
	\[
	w(t,T)=\int_{6T/5}^{9T/5}e^{-(t-\tau)^2}\mathrm{d}\tau,\quad T>0,\,t\in\mathbb{R},
	\]
	which satisfies the following properties
	\begin{itemize}
	\item $w(t,T)\ll 1$, for any $T>0$ and $t\in\mathbb{R}$,
		\item $w(t,T)\gg1$, for any $T>0$ and $6T/5\leq t\leq 9T/5$,
		\item $w(t,T)\ll e^{-\epsilon(t^2+T^2)}$, for any $T>0$ and $t\leq 7T/6$ or $t\geq11T/6$.
	\end{itemize}
	If we set 
	\[
	I_{\pi}(\sigma,T):=\frac{1}{T}\int_\mathbb{R}\abs*{L_\pi(\sigma+it)}^{q}\abs*{R(\sigma+it)}^2w(t,T)\mathrm{d}t,
	\]
	then we have in view of \eqref{ordergrowth} and \eqref{resonator0} that
	\begin{align}\label{intermediate}
		\begin{split}
			I_\pi({\textstyle{\frac{1}{2}}},T)
			&\ll\frac{1}{T}\int_{7T/6}^{11T/6}\abs{L_\pi(\textstyle{\frac{1}{2}}+it)}^{q}\abs{R(\textstyle{\frac{1}{2}}+it)}^2\mathrm{d}t\\
			&\quad+\frac{1}{T}\int_{t\notin[7T/6,11T/6]}(1+|t|)^{q\sum_{h\in\pi}d_{L_h}}T^{2\Delta}e^{-\epsilon(t^2+T^2)}\mathrm{d}t\\
			&\ll\frac{1}{T}\int_{T}^{2T}\abs{L_\pi(\textstyle{\frac{1}{2}}+it)}^{q}\abs{R(\textstyle{\frac{1}{2}}+it)}^2\mathrm{d}t+e^{-\epsilon T^2}.
		\end{split}
	\end{align}
	Hence, it suffices to prove Proposition \ref{keyprop} for $I_\pi(\frac{1}{2},T)$ instead.
	To that end, let
	\[
	P_i(s,T):=\sum_{n\leq X}c_i(n)n^{-s},\quad	s_\pi(s,T):=P_1(s,T)P_2(s,T),\quad d_\pi(s,T):=L_\pi(s)^u-s_\pi(s,T)^v,
	\]
	and
	\begin{align*}
		S_\pi(\sigma,T)&:=\frac{1}{T}\int_\mathbb{R}\abs*{s_{\pi}(\sigma+it,T)}\abs*{R(\sigma+it)}^2w(t,T)\mathrm{d}t,\\ D_\pi(\sigma,T)&:=\frac{1}{T}\int_\mathbb{R}\abs*{d_{\pi}(\sigma+it,T)}^{1/v}\abs*{R(\sigma+it)}^2w(t,T)\mathrm{d}t.
	\end{align*}
	Recall also here Gabriel's convexity theorem \cite[Theorem 2]{Gabriel1927} and the Montgomery-Vaughan mean value theorem for ordinary Dirichlet series \cite[Corollary 3]{Montgomery1974}.
	\begin{lem}\label{Gabriel0}
		Let $f(s)$ be analytic in the vertical strip $\sigma_1<\sigma<\sigma_2$, and continuous for $\sigma_1\leq\sigma\leq\sigma_2$.
		Suppose that $f(s)\to0$ as $|\mathrm{Im}(s)|\to\infty$ uniformly for $\sigma_1\leq\sigma\leq\sigma_2$. 
		Then for any $\sigma_1\leq\sigma\leq\sigma_2$ and any $y>0$ we have that
		\[
		\int_\mathbb{R}|f(\sigma+it)|^{y}\mathrm{d}t\leq\parentheses*{\int_\mathbb{R}|f(\sigma_1+it)|^{y}\mathrm{d}t}^{\frac{\sigma_2-\sigma}{\sigma_2-\sigma_1}}\parentheses*{\int_\mathbb{R}|f(\sigma_2+it)|^{y}\mathrm{d}t}^{\frac{\sigma-\sigma_1}{\sigma_2-\sigma_1}}.
		\]
	\end{lem}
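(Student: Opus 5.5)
The plan is to first prove a \emph{linear} convexity inequality for $F(\sigma):=\int_\mathbb{R}|f(\sigma+it)|^{y}\mathrm{d}t$, and then to upgrade it to the stated logarithmic (geometric-mean) form by multiplying $f$ by an exponential factor. We may assume that $F(\sigma_1)$ and $F(\sigma_2)$ are finite, since otherwise there is nothing to prove. We may also assume $\sigma_1<\sigma<\sigma_2$.

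\emph{Step 1: a harmonic majorant.} Since $f$ is analytic, $\log|f|$ is subharmonic, and hence so is $|f|^{y}=\exp(y\log|f|)$ for every $y>0$. Let $P_1(\sigma,t)$ and $P_2(\sigma,t)$ be the Poisson kernels of the strip $\sigma_1<\sigma<\sigma_2$ for the left and right boundary lines; these are explicit, obtained by conformally mapping the strip to the half-plane. Define $U(\sigma+it)$ as the Poisson integral of the boundary data $|f(\sigma_j+i\tau)|^{y}$, $j=1,2$. Then $U$ is harmonic and non-negative in the strip. The harmonic measures of the two boundary lines are the linear functions $(\sigma_2-\sigma)/(\sigma_2-\sigma_1)$ and $(\sigma-\sigma_1)/(\sigma_2-\sigma_1)$, which gives
\[
\int_\mathbb{R}P_1(\sigma,t-\tau)\,\mathrm{d}t=\frac{\sigma_2-\sigma}{\sigma_2-\sigma_1},\qquad \int_\mathbb{R}P_2(\sigma,t-\tau)\,\mathrm{d}t=\frac{\sigma-\sigma_1}{\sigma_2-\sigma_1}.
\]
The function $u:=|f|^{y}-U$ is subharmonic. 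It is bounded above, because $f$ is continuous on the closed strip and tends to $0$, hence is bounded there. We have $\limsup u\leq0$ as $|t|\to\infty$, since $|f|^{y}\to0$ uniformly and $U\geq0$. Finally, $u\leq0$ on the boundary lines. The maximum principle on the strip then yields $|f|^{y}\leq U$ throughout. Integrating in $t$ and using Tonelli's theorem together with the kernel masses above gives
\[
F(\sigma)\leq\frac{\sigma_2-\sigma}{\sigma_2-\sigma_1}F(\sigma_1)+\frac{\sigma-\sigma_1}{\sigma_2-\sigma_1}F(\sigma_2).
\]

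\emph{Step 2: from convexity to log-convexity.} For real $c$, the function $f(s)e^{cs}$ still satisfies all the hypotheses of the lemma on the strip, because $e^{c\sigma}$ is bounded there. Its $y$-th moment on the line $\mathrm{Re}\,s=\sigma$ is $e^{cy\sigma}F(\sigma)$. Applying Step 1 to this function gives
\[
e^{cy\sigma}F(\sigma)\leq\frac{\sigma_2-\sigma}{\sigma_2-\sigma_1}e^{cy\sigma_1}F(\sigma_1)+\frac{\sigma-\sigma_1}{\sigma_2-\sigma_1}e^{cy\sigma_2}F(\sigma_2).
\]
If both $F(\sigma_1)$ and $F(\sigma_2)$ are positive, choose $c$ so that $e^{cy\sigma_1}F(\sigma_1)=e^{cy\sigma_2}F(\sigma_2)=:M$. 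The right-hand side then equals $M$, and solving for $F(\sigma)=Me^{-cy\sigma}$ gives exactly $F(\sigma_1)^{(\sigma_2-\sigma)/(\sigma_2-\sigma_1)}F(\sigma_2)^{(\sigma-\sigma_1)/(\sigma_2-\sigma_1)}$. If one of the boundary integrals vanishes, say $F(\sigma_1)=0$, let $c\to\pm\infty$ so that the surviving term is killed; this forces $F(\sigma)=0$ (or use directly that $f\equiv0$ on a line, hence $f\equiv0$).

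\emph{Main obstacle.} The delicate point is Step 1: one must justify that the Poisson integral $U$ of boundary data that is merely continuous and integrable attains those boundary values continuously, so that $u\leq0$ holds on the boundary and the maximum principle applies. I would handle this either by standard Poisson-kernel estimates, which give continuity at boundary points for bounded continuous data, or more cleanly by applying the argument on slightly smaller strips $[\sigma_1+\delta,\sigma_2-\delta]$, where $f$ is analytic up to the edges. One then lets $\delta\to0$, using the uniform continuity of $f$ on compacta and its uniform decay to pass to the limit in $F(\sigma_1+\delta)$ and $F(\sigma_2-\delta)$ via dominated convergence or Fatou's lemma.
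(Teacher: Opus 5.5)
Your argument is correct. Note that the paper gives no proof of this lemma: it simply recalls it as Gabriel's convexity theorem, \cite[Theorem 2]{Gabriel1927}. So you are not following the paper. You have supplied a self-contained proof of a result the paper treats as a black box.

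Your route has two steps. First, subharmonicity of $|f|^{y}$ together with the Poisson (harmonic-measure) majorant on the strip gives linear convexity of $F(\sigma)=\int_\mathbb{R}|f(\sigma+it)|^{y}\,\mathrm{d}t$. Second, the twist by $e^{cs}$, with $c$ chosen to balance the two boundary terms, upgrades this to the geometric-mean form. This is the same device that turns the maximum principle into Hadamard's three-lines theorem.

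The citation buys brevity. Your proof makes explicit that the only inputs are:
\begin{itemize}
\item subharmonicity of $|f|^{y}$, which is why any $y>0$ costs nothing (the paper needs $y=1/v$ in Lemmas \ref{Gabriel} and \ref{lowerboundapprox2});
\item boundedness and continuity of $f$ up to the edges;
\item uniform decay, used to run the maximum principle at $\infty$, say on rectangles $[\sigma_1,\sigma_2]\times[-R,R]$ with $R\to\infty$.
\end{itemize}
Your handling of Tonelli, the kernel masses, and the degenerate case $F(\sigma_1)=0$ (send $c\to-\infty$) is fine.

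One caution concerns your ``main obstacle'' paragraph. Your first option closes Step 1 completely: the boundary data $|f(\sigma_j+i\tau)|^{y}$ are bounded and continuous, and the Poisson integral of such data is continuous up to the boundary lines with those values.

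Your second option, shrinking to $[\sigma_1+\delta,\sigma_2-\delta]$, is not justified as written. To pass to the limit you need $\limsup_{\delta\to0}F(\sigma_1+\delta)\le F(\sigma_1)$. The hypotheses supply no integrable majorant in $t$ for dominated convergence, and Fatou's lemma only gives $F(\sigma_1)\le\liminf_{\delta\to0}F(\sigma_1+\delta)$, the wrong direction. That upper semicontinuity does follow from Step 1 applied on the full strip, but then the shrinking is circular. Use the first option.
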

	\begin{lem}\label{M-V}
		If $\sum_{n\geq1}n|a_n|^2<\infty$, then
		\[
		\int_0^T\abs*{\sum_{n\geq1}a_nn^{-it}}^2\mathrm{d}t=\sum_{n\geq1}|a_n|^2(T+O(n)).
		\]
	\end{lem}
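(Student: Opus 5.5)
The plan is to expand the square and handle the diagonal and off-diagonal contributions separately. This is the classical Montgomery–Vaughan argument; the paper only cites it, so I sketch the route I would take. The hypothesis $\sum_n n|a_n|^2<\infty$ implies $\sum_n|a_n|^2<\infty$. Hence, for each fixed $T$, the series $\sum_n a_n n^{-it}$ converges in $L^2(0,T)$, and it suffices to prove the identity for finitely supported sequences with constants independent of the support, then pass to the limit.

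For finite sums, expanding $\abs{\sum_n a_n n^{-it}}^2$ and integrating term by term gives
\[
\int_0^T\abs*{\sum_{n}a_nn^{-it}}^2\mathrm{d}t=T\sum_n|a_n|^2+\sum_{m\neq n}a_m\overline{a_n}\,\frac{(n/m)^{iT}-1}{i\log(n/m)}.
\]
The first term is the main term $T\sum_n|a_n|^2$. Writing $\lambda_n:=\log n$, the off-diagonal term splits into two bilinear forms of Hilbert type,
\[
\sum_{m\neq n}\frac{x_m\overline{y_n}}{\lambda_m-\lambda_n},
\]
one with $x_m=a_m m^{-iT}$, $y_n=a_n n^{-iT}$ and one with $x_m=a_m$, $y_n=a_n$.

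The key input is the generalised Hilbert inequality of Montgomery and Vaughan. For distinct real $\lambda_n$ with $\delta_n:=\min_{m\neq n}|\lambda_m-\lambda_n|$, it states that
\[
\abs*{\sum_{m\neq n}\frac{x_m\overline{x_n}}{\lambda_m-\lambda_n}}\leq\frac{3\pi}{2}\sum_n\frac{|x_n|^2}{\delta_n}.
\]
For $\lambda_n=\log n$ we have $\delta_n\geq\log\frac{n+1}{n}\geq\frac{1}{2n}$. Each off-diagonal form is therefore $O\parentheses*{\sum_n n|a_n|^2}$, which is exactly the claimed error $\sum_n|a_n|^2O(n)$.

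The main obstacle is this Hilbert-type inequality with the weights $1/\delta_n$. A naive Schur-test bound loses a logarithm, whereas the sharp weighted form is what the statement needs. I would prove it by the Montgomery–Vaughan method. One first reduces to the case where the form is Hermitian. One then bounds the operator norm through the identity
\[
\sum_{n\neq m,\,k\neq m}\frac{1}{(\lambda_m-\lambda_n)(\lambda_m-\lambda_k)}=\sum_{n\neq m}\frac{1}{(\lambda_m-\lambda_n)^2}+(\text{telescoping cross terms}),
\]
which is obtained by partial fractions. The resulting double sums are controlled by $\sum_{n\neq m}(\lambda_m-\lambda_n)^{-2}\ll\delta_m^{-2}$, using the spacing. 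Rather than reproving this, I would simply cite it, since it is standard.
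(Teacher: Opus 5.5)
Your proof is correct and is the same Montgomery--Vaughan argument the paper relies on: the paper cites their Corollary~3, and you derive that corollary from their weighted Hilbert inequality with $\delta_n\geq\log(1+1/n)\geq 1/(2n)$, which is exactly how they obtain it. Two small repairs are needed. First, the $L^2(0,T)$ convergence does not follow from $\sum_n|a_n|^2<\infty$ alone, since the $n^{-it}$ are not orthogonal on $[0,T]$; it follows instead from applying your uniform finite-sum bound to blocks, which gives $\int_0^T\abs*{\sum_{M<n\leq N}a_nn^{-it}}^2\mathrm{d}t\ll\sum_{M<n\leq N}(T+n)|a_n|^2\to0$. Second, in your heuristic sketch the spacing estimate must be the weighted one, $\sum_{n\neq m}\delta_n(\lambda_m-\lambda_n)^{-2}\ll\delta_m^{-1}$, because the unweighted sum $\sum_{n\neq m}(\lambda_m-\lambda_n)^{-2}$ diverges for $\lambda_n=\log n$.
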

	We can now prove the following preparatory lemmas.
	\begin{lem}\label{Gabriel}
		Let $\beta>3/2$ be fixed. 
		We have, uniformly for $1/2\leq\sigma\leq\beta$ and $T\gg1$, that
		\[
		I_\pi(\sigma, T)\ll I_\pi\parentheses{{\textstyle\frac{1}{2}},T}^{1+\frac{1/2-\sigma}{\beta-1/2}}+e^{-\epsilon T^2}.
		\]
	\end{lem}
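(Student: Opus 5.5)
We will apply Gabriel's convexity theorem (Lemma \ref{Gabriel0}) on the strip $\frac12\le\sigma\le\beta$ with exponent $y=1$ (or $y=2$ after taking square roots) to the analytic function
\[
f(s):=L_\pi(s)^{q}R(s)^{2}\ \text{-type object}\quad\text{realised as}\quad f(s)=L_\pi(s)^{q}\,R(s)\,R^{*}(s)\,e^{(s-i\tau)^2/2},
\]
where $R^*(s):=\sum_{n\le X}\overline{r(n)}n^{1/2-s}$, so that $|R(\sigma+it)|^2$ is bounded by $|R(\sigma+it)R^*(\sigma-it)|$. To avoid conjugation issues we instead work directly with $|L_\pi(s)^{q/2}R(s)|^2$ via $y=2$. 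Concretely, set $g_\tau(s):=L_\pi(s)^{q/2}R(s)e^{(s-i\tau)^2/2}$ for fixed $\tau\in[6T/5,9T/5]$. Since $L_\pi$ has no poles for $|t|$ large and a branch of $L_\pi^{q/2}$ in the strip is not globally available (zeros), we instead take $g_\tau(s)=L_\pi(s)^{u}R(s)^{2v}e^{v(s-i\tau)^2}$, which is genuinely analytic (after removing the possible pole at $s=1$ by a factor $(s-1)^{m}/(s+1)^m$ of modulus $\asymp1$ for $|t|\asymp T$), and apply Lemma \ref{Gabriel0} with $y=1/v$. Note $|g_\tau|^{1/v}=|L_\pi|^{q}|R|^2e^{\mathrm{Re}(s-i\tau)^2}=|L_\pi|^q|R|^2e^{(\sigma^2-(t-\tau)^2)}$, and $e^{\sigma^2}\asymp1$ in the strip. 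The decay hypothesis holds by \eqref{ordergrowth} and the Gaussian factor.

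Lemma \ref{Gabriel0} gives, for each $\tau$,
\[
\int|L_\pi|^q|R|^2e^{-(t-\tau)^2}\big|_{\sigma}\ll\Big(\int\cdots\big|_{1/2}\Big)^{\frac{\beta-\sigma}{\beta-1/2}}\Big(\int\cdots\big|_{\beta}\Big)^{\frac{\sigma-1/2}{\beta-1/2}}.
\]
On $\sigma=\beta>3/2$ the Dirichlet series of $L_\pi^q$ converges absolutely, so $|L_\pi(\beta+it)|^q\ll1$, and $|R(\beta+it)|^2\le(\sum|r(n)|n^{1/2-\beta})^2\le\sum_n|r(n)|^2$ by Cauchy–Schwarz since $\sum n^{1-2\beta}<\infty$; by the multiplicative support, $\sum|r(n)|^2\le\prod_p(1+|r(p)|^2)$. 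Hence the $\beta$-integral is $\ll\prod_p(1+|r(p)|^2)$. Then integrate over $\tau\in[6T/5,9T/5]$ and apply Hölder in $\tau$ with the same exponents to pass from the pointwise-in-$\tau$ inequality to $I_\pi(\sigma,T)\ll I_\pi(\frac12,T)^{\frac{\beta-\sigma}{\beta-1/2}}\prod_p(1+|r(p)|^2)^{\frac{\sigma-1/2}{\beta-1/2}}$ (the $1/T$ normalisations combine correctly since the exponents sum to one).

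The main obstacle is converting this into the stated form $I_\pi(\frac12,T)^{1+\frac{1/2-\sigma}{\beta-1/2}}$, i.e.\ absorbing the $\beta$-factor. Here I would use a lower bound $I_\pi(\frac12,T)\gg\prod_p(1+|r(p)|^2)$ up to negligible error — actually the needed direction is $\prod_p(1+|r(p)|^2)\ll I_\pi(\frac12,T)$ raised to a nonnegative power, which would make the statement circular with Proposition \ref{keyprop}. So I expect the intended normalisation is that the $\beta$-line contribution is $\ll1$ relative to the resonator mass, and I would first check whether the paper's convention divides by $\prod_p(1+|r(p)|^2)$; if not, the alternative is to bound $\int|R(\beta+it)|^2 w\,dt/T$ by Lemma \ref{M-V}, giving $\sum|r(n)|^2n^{1-2\beta}\ll1$ (since $r(n)$ is small on $\mathcal P$ and $n\ge\mathscr L^2$), so the $\beta$-integral is $\ll1$ directly. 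That is the route I will take; the $e^{-\epsilon T^2}$ term absorbs tails where the pole-removing factor and the weight comparison $w$ versus $e^{-(t-\tau)^2}$ fail.
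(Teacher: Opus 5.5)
Your final route is correct and is essentially the paper's proof: Gabriel's convexity theorem with $y=1/v$ applied to $L_\pi(s)^uR(s)^{2v}e^{v(s-i\tau)^2}$ times a pole-removing factor, then integration over $\tau\in[6T/5,9T/5]$ with H\"older, and finally the $\beta$-line contribution being $O(1)$. Your factor $((s-1)/(s+1))^{mu}$ is a slightly cleaner substitute for the paper's $(s-1)^{mu}$, whose harmless $\tau^{\pm mq}$ factors the paper tracks; the ``obstacle'' you raise does not exist, because $\frac{\beta-\sigma}{\beta-1/2}=1+\frac{1/2-\sigma}{\beta-1/2}$, and the paper simply uses the pointwise bound $|L_\pi(\beta+it)|^q|R(\beta+it)|^2\ll1$ (indeed $|R(\beta+it)|\le\prod_p(1+|r(p)|p^{1/2-\beta})=1+o(1)$), so your first Cauchy--Schwarz bound by $\prod_p(1+|r(p)|^2)$ was merely too lossy.
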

	\begin{proof}
		The proof is the same as of \cite[Lemma 5]{HeathBrown1981} with $L(s)^uR(s)^{2v}$ in place of $\zeta(s)$.
		We briefly repeat it here for the sake of completeness and because it will also serve as reference for the proof of the next lemma.
		If we set $f(s):=(s-1)^{mu}L_\pi(s)^uR(s)^{2v}\exp\parentheses*{v(s-i\tau)^2}$, where $m$ is the degree of the pole of $L_\pi(s)$ at $s=1$ and $\tau\geq2$, then  \eqref{ordergrowth} and \eqref{resonator0} imply that $f(s)$ satisfies the assumptions of Lemma \ref{Gabriel0} in the strip $1/2\leq\sigma\leq\beta$.
		Hence,
		\[
		\int_\mathbb{R}|f(\sigma+it)|^{1/v}\mathrm{d}t\leq\parentheses*{\int_\mathbb{R}\abs{f\parentheses{{\textstyle{\frac{1}{2}}}+it}}^{1/v}\mathrm{d}t}^{\frac{\beta-\sigma}{\beta-1/2}}\parentheses*{\int_\mathbb{R}|f(\beta+it)|^{1/v}\mathrm{d}t}^{\frac{\sigma-1/2}{\beta-1/2}}.
		\]
		Observe that for $\xi\in\set{1/2,\beta}$ we have that
		\begin{align*}
			\int_\mathbb{R}\abs*{f\parentheses{\xi+it}}^{1/v}\mathrm{d}t
			&\ll\int_\mathbb{R}\parentheses{1+|t|}^{mq}\abs*{L_\pi\parentheses*{\xi+it}}^{q}\abs*{R\parentheses*{\xi+it}}^2e^{-(t-\tau)^2}\mathrm{d}t\\
			&\ll\tau^{mq}\int_{5\tau/6}^{10\tau/9}\abs*{L_\pi\parentheses*{\xi+it}}^{q}\abs*{R\parentheses*{\xi+it}}^2e^{-(t-\tau)^2}\mathrm{d}t+T^{2\Delta}e^{-\epsilon\tau^2}
		\end{align*}
		due to the exponential decay of the integrand when $t\notin[5\tau/6,10\tau/9]$.
		For the same reason
		\begin{align*}
			J(\sigma,\tau)&:=\int_\mathbb{R}\abs*{L_\pi\parentheses*{\sigma+it}}^{q}\abs*{R\parentheses*{\sigma+it}}^2e^{-(t-\tau)^2}\mathrm{d}t\\
			&\ll	\int_{5\tau/6}^{10\tau/9}\abs*{L_\pi\parentheses*{\sigma+it}}^{q}\abs*{R\parentheses*{\sigma+it}}^2e^{-(t-\tau)^2}\mathrm{d}t+T^{2\Delta}e^{-\epsilon\tau^2}\\
			&\ll \tau^{-mq}	\int_\mathbb{R}\abs*{f\parentheses*{\sigma+it}}^{1/v}\mathrm{d}t+T^{2\Delta}e^{-\epsilon\tau^2}.
		\end{align*}
		From the above relations and inequality $(x+y)^a\leq 2^{a-1}(x^a+y^a)$, valid for any positive numbers $x,y$ and $a\leq1$, we derive that
		\begin{align*}
			J(\sigma,\tau)
			&\ll J\parentheses{{\textstyle{\frac{1}{2}}},\tau}^{\frac{\beta-\sigma}{\beta-1/2}}\parentheses*{\int_{5\tau/6}^{10\tau/9}\abs*{L_\pi\parentheses*{\beta+it}}^{q}\abs*{R\parentheses*{\beta+it}}^2e^{-(t-\tau)^2}\mathrm{d}t+\frac{T^{2\Delta}}{e^{\epsilon\tau^2}}}^{\frac{\sigma-1/2}{\beta-1/2}}+\frac{T^{2\Delta}}{e^{\epsilon\tau^2}}.
		\end{align*}
		We then integrate both sides of the latter relation over $6T/5\leq\tau\leq 9T/5$, apply H\"older's inequality to the right-hand side of the resulting relation (to insert the integration inside the powers) and lastly divide by $T$ to conclude that
		\begin{align*}
			I_\pi(\sigma,T)\ll I_\pi\parentheses{{\textstyle{\frac{1}{2}}},T}^{\frac{\beta-\sigma}{\beta-1/2}}\parentheses*{\frac{1}{T}\int_{T}^{2T}\abs*{L_\pi\parentheses*{\beta+it}}^{q}\abs*{R\parentheses*{\beta+it}}^2\mathrm{d}t+e^{-\epsilon T^2}}^{\frac{\sigma-1/2}{\beta-1/2}}+e^{-\epsilon T^2}.
		\end{align*}
		The lemma follows now by noting that  $\abs*{L_\pi\parentheses*{\beta+it}}^{q}\abs*{R\parentheses*{\beta+it}}^2\ll1$, $t\in\mathbb{R}$, $\beta>3/2$. 
	\end{proof}
	\begin{lem}\label{lowerboundapprox2}
		Let $\beta\gg1$ be any sufficiently large but fixed positive number. 
		We then have, uniformly for $1/2\leq\sigma\leq\beta$ and $T\gg1$, that
		\begin{align*}
			D_\pi(\sigma,T)
			&\ll D_\pi({\textstyle{\frac{1}{2}}},T)^{1+\frac{1/2-\sigma}{\beta-1/2}}T^{(1/2-\sigma)\parentheses*{1-1/\beta}\Delta/v}+e^{-\epsilon T^{2}}.
		\end{align*}
	\end{lem}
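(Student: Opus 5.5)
We follow the proof of Lemma \ref{Gabriel} (itself modelled on \cite[Lemma 5]{HeathBrown1981}), now applying Gabriel's convexity theorem to $d_\pi(s,T)R(s)^{2v}$ instead of $L_\pi(s)^uR(s)^{2v}$. Concretely, for $\tau\in[6T/5,9T/5]$ we set
\[
f(s):=(s-1)^{mu}d_\pi(s,T)R(s)^{2v}\exp\parentheses*{v(s-i\tau)^2},
\]
with $m$ the order of the pole of $L_\pi$ at $s=1$. The factor $(s-1)^{mu}$ removes the pole of $L_\pi^u$, while $s_\pi^v$ is an entire Dirichlet polynomial. Using \eqref{ordergrowth}, the trivial bound $s_\pi(s,T)\ll X^{O(1)}$ in the strip, and \eqref{resonator0}, we see that $f$ is analytic in $1/2<\sigma<\beta$, continuous up to the boundary, and decays uniformly as $|t|\to\infty$. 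Lemma \ref{Gabriel0} with exponent $y=1/v$ therefore applies. We then localize exactly as before: define
\[
J_D(\sigma,\tau):=\int_\mathbb{R}\abs{d_\pi(\sigma+it,T)}^{1/v}\abs{R(\sigma+it)}^2e^{-(t-\tau)^2}\mathrm{d}t,
\]
and note that, up to the factor $\tau^{\pm mq}$ and an error $T^{O(\Delta)}e^{-\epsilon\tau^2}$, this is $\int\abs{f}^{1/v}$. This yields
\[
J_D(\sigma,\tau)\ll J_D\parentheses{\tfrac12,\tau}^{\frac{\beta-\sigma}{\beta-1/2}}\parentheses*{\int_{5\tau/6}^{10\tau/9}\abs{d_\pi(\beta+it,T)}^{1/v}\abs{R(\beta+it)}^2e^{-(t-\tau)^2}\mathrm{d}t+e^{-\epsilon\tau^2}}^{\frac{\sigma-1/2}{\beta-1/2}}+e^{-\epsilon\tau^2}.
\]
Integrating over $\tau\in[6T/5,9T/5]$, applying H\"older, and dividing by $T$ then bounds $D_\pi(\sigma,T)$ by $D_\pi(\frac12,T)^{\frac{\beta-\sigma}{\beta-1/2}}$ times the $\frac{\sigma-1/2}{\beta-1/2}$-th power of the average of $\abs{d_\pi(\beta+it,T)}^{1/v}\abs{R(\beta+it)}^2$, plus $e^{-\epsilon T^2}$.

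The new ingredient, and the main obstacle, is the line $\sigma=\beta$. Unlike in Lemma \ref{Gabriel}, a bound $\ll1$ there is not enough; we need saving from the truncation. For $\beta$ large, $L_\pi(s)^u=(L_\pi(s)^q)^v=\parentheses*{\sum c_1(n)n^{-s}\sum c_2(n)n^{-s}}^v$ converges absolutely. Hence $d_\pi(s,T)$ is a Dirichlet series supported on $n>X$: every term of $s_\pi^v$ with all factors $\le X$ cancels against the corresponding term of $L_\pi^u$, so only the tails survive. Using \eqref{fractcoeffic}, $\abs{c_i(n)}\ll n^{\theta_\pi+\epsilon}$ with $\theta_\pi<1/4$, so for $\beta$ large we get
\[
d_\pi(\beta+it,T)\ll \sum_{n>X}n^{\theta_\pi+\epsilon-\beta}\ll X^{1-\beta+\theta_\pi+\epsilon}.
\]
Together with $\abs{R(\beta+it)}^2\ll X^{2(1/2-\beta)}\parentheses*{\sum\abs{r(n)}}^2$, which is harmless because $r(n)$ is tiny, we are left with a power $X^{-(\beta-1-\theta_\pi-\epsilon)/v}$ raised to $\frac{\sigma-1/2}{\beta-1/2}$. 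Since $X=T^\Delta$, the target exponent is
\[
(1/2-\sigma)(1-1/\beta)\Delta/v=-\frac{(\sigma-1/2)\Delta}{v}\cdot\frac{\beta-1}{\beta}.
\]
We therefore need $\frac{\beta-1-\theta_\pi-\epsilon}{\beta-1/2}\geq\frac{\beta-1}{\beta}$, that is, roughly $\beta/2\geq\beta(\theta_\pi+\epsilon)+1-\theta_\pi$. This holds for $\beta$ sufficiently large since $\theta_\pi<1/4<1/2$, which explains the hypothesis on $\beta$. The bookkeeping of the $R$ factor at $\beta$ may require a slightly cruder bound, e.g.\ $\abs{R(\beta+it)}\le\sum|r(n)|n^{1/2-\beta}\ll1$. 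Finally, combining $D_\pi(\frac12,T)^{\frac{\beta-\sigma}{\beta-1/2}}=D_\pi(\frac12,T)^{1+\frac{1/2-\sigma}{\beta-1/2}}$ with this saving gives the lemma.
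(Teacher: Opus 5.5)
Your proof is correct and shares the paper's skeleton: the same auxiliary function $f(s)=(s-1)^{mu}d_\pi(s,T)R(s)^{2v}\exp\bigl(v(s-i\tau)^2\bigr)$, Lemma \ref{Gabriel0} with $y=1/v$, then Gaussian localization and H\"older in $\tau$, which produces the paper's intermediate bound \eqref{D1}. The routes differ only in how the line $\sigma=\beta$ is handled.

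\textbf{The line $\sigma=\beta$.} The paper bounds $|R(\beta+it)|^2\ll1$ and uses H\"older to pass to the mean square of $d_\pi(\beta+it,T)$. It then splits $L_\pi^u-s_\pi^v$ into the $c_1$- and $c_2$-tails, bounding the cofactors by $O(1)$. Finally it applies the Montgomery--Vaughan theorem (Lemma \ref{M-V}), which gives an average of $|d_\pi|^{1/v}$ of size $X^{-(\beta-1/2-\theta_\pi)/v}$.

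You instead note directly that $d_\pi(s,T)$ is a Dirichlet series supported on $n>X$: every $2v$-tuple with all entries $\le X$ cancels. You then bound it pointwise by absolute convergence, which gives $X^{-(\beta-1-\theta_\pi-\epsilon)/v}$.

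\textbf{What each route buys.} Your bound loses a factor of about $X^{1/(2v)}$ against the mean-square argument. In exchange it is more elementary: no decomposition into $c_1$- and $c_2$-tails and no mean value theorem. The cost is a condition on $\beta$.
\begin{itemize}
\item The paper's saving dominates $T^{(1/2-\sigma)(1-1/\beta)\Delta/v}$ once $\beta(1-\theta_\pi)\ge 1/2$, so for every $\beta>3/2$. In the paper, the largeness of $\beta$ is therefore needed only later, in the proof of Proposition \ref{keyprop}.
\item Your bound needs $\beta\bigl(\tfrac12-\theta_\pi-\epsilon\bigr)\ge\tfrac12$. This is the correct form of your ``roughly'' inequality: the term $1-\theta_\pi$ there should be $\tfrac12$. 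It still holds for fixed large $\beta$, since $\theta_\pi<1/4$.
\end{itemize}

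\textbf{Two small points.}
\begin{itemize}
\item The bound $|R(\beta+it)|^2\ll X^{2(1/2-\beta)}\bigl(\sum|r(n)|\bigr)^2$ is false. For $n\le X$ one has $n^{1/2-\beta}\ge X^{1/2-\beta}$, and the term $n=1$ alone contributes $1$. Moreover $\sum_{n\le X}|r(n)|$ is very large, not tiny. Your fallback $|R(\beta+it)|\le\prod_p\bigl(1+|r(p)|p^{1/2-\beta}\bigr)\ll1$ for $\beta>3/2$ (using $r(p)=o(1)$) is correct. It is exactly what the paper uses and all you need.
\item The term $e^{-\epsilon T^2}$ inside the bracket raised to $\frac{\sigma-1/2}{\beta-1/2}$ should be absorbed explicitly. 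This is immediate, since any fixed power $X^{-c}$ dominates $e^{-\epsilon T^2}$.
\end{itemize}
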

	\begin{proof}
		
		This time we set $f(s)=	(s-1)^{mu}d_\pi(s,T)R(s)^{2v}\exp\parentheses*{v(s-i\tau)^2}$. 
		Once more we can employ Lemma \ref{Gabriel0} and by arguing exactly as in Lemma \ref{Gabriel}, we can show that 
		\begin{align}\label{D1}
			D_\pi(\sigma,T)
			\ll D_\pi({\textstyle{\frac{1}{2}}},T)^{\frac{\beta-\sigma}{\beta-1/2}}\parentheses*{\frac{1}{T}\int_{T}^{2T}\abs*{d_\pi(\beta+it,T)}^{1/v}\abs*{R(\beta+it)}^2\mathrm{d}t+e^{-\epsilon T^2}}^{\frac{\sigma-1/2}{\beta-1/2}}+e^{-\epsilon T^2}.
		\end{align}
		The factor $\abs*{R(\beta+it)}^2$ in the integrand is $O(1)$ when $\beta>3/2$ and so we exclude it from the succeeding estimations.
		An application of Hölder's inequality yields then that the above integral is bounded by
		\begin{align}\label{D2}
			J:=\parentheses*{\frac{1}{T}\int_{T}^{2T}\abs*{L_\pi\parentheses*{\beta+it}^u-s_\pi\parentheses*{\beta+it,T}^v}^{2}\mathrm{d}t}^{1/(2v)}.
		\end{align}
		Note that, for $\sigma>1$, $L_\pi\parentheses*{s}^u-s_\pi\parentheses*{s,T}^v
		=		\parentheses*{L_\pi\parentheses*{s}^q}^v-s_\pi\parentheses*{s,T}^v$ which can be written as
		\begin{align*}
		\parentheses*{\sum_{n\geq1}c_2(n)n^{-s}}^v\brackets*{\parentheses*{\sum_{n\geq1}c_1(n)n^{-s}}^v-P_1(s)^v}+P_1(s)^v\brackets*{\parentheses*{\sum_{n\geq1}c_2(n)n^{-s}}^v-P_2(s)^v}.
		\end{align*}
Since all terms are $O(1)$, we obtain that
		\[
		J^{2v}\ll\sum_{i\leq2}\frac{1}{T}\int_{T}^{2T}\abs*{\parentheses*{\sum_{n\geq1}c_i(n)n^{-\beta-it}}^v-\parentheses*{\sum_{n\leq X}c_i(n)n^{-\beta-it}}^v}^2\mathrm{d}t.
		\]
	By construction, the integrands above can be expressed as 
		\[
		\abs*{\sum_{n>X}{C_{i}(n)}{n^{-\beta-it}}}^2,\quad
		C_i(n):=\sum_{\substack{n_1,\dots,n_v\geq1\\n_1\cdots n_v=n\\\exists j\leq v:\,n_j>X}}c_i(n_1)\cdots c_i(n_v)\ll n^{\theta_\pi},
		\]
		with the upper bound being derived from \eqref{fractcoeffic}.
	Lemma \ref{M-V} yields then that
		\begin{align}\label{D3}
		J^{2v}
			\ll\sum_{i\leq2}\sum_{n>X}\frac{\abs*{C_i(n)}^2}{n^{2\beta}}\parentheses*{1+\frac{n}{T}}
			\ll T^{-{\Delta}(2\beta-1)\parentheses*{1-\frac{2\theta_\pi}{2\beta-1}}}
		\end{align}
		and the lemma follows from \eqref{D1}--\eqref{D3}.
	\end{proof}
		Before moving to the next two lemmas, we introduce some additional notation. 
		Let $\eta:=(\log\mathscr{L})^{-3}$ and
		\[
		\mathscr{V}_\pi(\sigma):=\mathscr{R}(\sigma)\exp\parentheses*{\mathscr{F}_{\pi}(\sigma)},
		\] where
	\begin{align*}
		\mathscr{R}(\sigma):=\prod_{p}\parentheses*{1+\frac{|r(p)|^2}{p^{2\sigma-1}}}\quad\text{ and }\quad\mathscr{F}_\pi(\sigma):=\sum_{p}\frac{\mathrm{Re}\parentheses{\overline{r(p)}c_{1}(p)}}{p^{2\sigma-1/2}\parentheses*{1+\frac{|r(p)|^2}{p^{2\sigma-1}}}}.
		\end{align*}
	In view of \eqref{fractcoeffic},  \ref{resonator properties(vi)} and \ref{resonator properties(iii)} we have, uniformly for $T\gg1$ and $1/2\leq\sigma\leq1/2+\eta$, that
	\begin{align}\label{mainasymp}
		\mathscr{F}_\pi(\sigma)=\mathscr{F}_\pi\parentheses*{\textstyle{\frac{1}{2}}}+O\parentheses*{\frac{\sqrt{\log X}}{\log\log X}}=\parentheses*{q\kappa_{\pi}+o(1)}\sqrt{\frac{\log X}{\kappa_L\lambda\log\log X}}.
	\end{align}
	We proceed to show that $S_\pi(\sigma, T)$ is, up to logarithmic powers, bounded from below and above by $\mathscr{V}_\pi(\sigma)$ in a uniform range of $\sigma$ close to $1/2$. 
		The proof of the lower bound is standard (see for instance \cite[Section 4]{Heap2024}, \cite[Lemma 2.1]{Aistleitner2017}, \cite[Theorem 2.1]{Soundararajan2008}) and we will therefore sketch here briefly the arguments with suitable adjustments.
	\begin{lem}\label{lowerboundapprox1}
		We have,  uniformly for $T\gg1$ and $1/2\leq\sigma\leq1/2+\eta$, that
		\[
		S_\pi(\sigma, T)\gg\mathscr{V}_\pi(\sigma).
		\]
	\end{lem}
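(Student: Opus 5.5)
We follow the standard resonance argument: we bound the $|s_\pi|$-moment from below by an absolute value of a $t$-integral, which is then evaluated on the diagonal.

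\textbf{Step 1: remove the absolute value.} Since $w>0$,
\[
S_\pi(\sigma,T)\geq \frac{1}{T}\abs*{\int_\mathbb{R}s_\pi(\sigma+it,T)\abs{R(\sigma+it)}^2w(t,T)\,\mathrm{d}t}.
\]
We expand $s_\pi=P_1P_2=\sum_{n\le X^2}b(n)n^{-s}$, where $b=c_1\ast c_2$ is restricted to $n_1,n_2\le X$, and $|R|^2=\sum_{m,k\le X}r(m)\overline{r(k)}(mk)^{1/2-\sigma}(k/m)^{it}$. The $t$-integral of each term is $\widehat w$ evaluated at $\log(km/n\cdot\ldots)$, namely $\int w(t,T)(k/(mn))^{it}\,\mathrm{d}t$. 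Since $w$ is a Gaussian smoothed over $[6T/5,9T/5]$, this integral equals $\frac{3T}{5}+O(1)$ when $mn=k$. When $mn\neq k$ it is $O\bigl(e^{-\epsilon (T\log(k/mn))^2}\bigr)+O(e^{-c\log^2(k/mn)})$, and since $k,mn\le X^3=T^{3\Delta}$ with $\Delta<1/2$, we have $|\log(k/mn)|\gg T^{-3\Delta}$, so these terms are negligible. The total error is $\ll T^{O(\Delta)}\sum|r|\,|b|\,|r|$ times a tiny factor, which is dominated by the main term after using $\mathscr R(\sigma)\le\prod(1+|r(p)|^2)$ and the trivial bound $\sum_{n\le X}|r(n)|\ll X^{1/2+\epsilon}\mathscr R^{1/2}$.

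\textbf{Step 2: diagonal.} The main term is
\[
\frac{3}{5}\sum_{mn=k\le X}\frac{b(n)r(m)\overline{r(k)}}{n^{\sigma}(mk)^{\sigma-1/2}}.
\]
Because $r$ is supported on squarefree integers built from primes in $\mathcal P$, only $n$ composed of primes in $\mathcal P$ with $(n,m)=1$ and $n$ squarefree contribute. Hence $b(n)=c_1(n)$, and $c_1(p)=q a_{L_\pi}(p)$ by \eqref{fractcoeffic}. Ignoring the truncations $k\le X$ and $n\le X$, the sum factors as
\[
\prod_{p\in\mathcal P}\parentheses*{1+\frac{|r(p)|^2}{p^{2\sigma-1}}+\frac{c_1(p)\overline{r(p)}}{p^{2\sigma-1/2}}}.
\]
Its modulus is at least its real part. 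Using $\log(1+x+y)\geq \log(1+x)+\frac{\mathrm{Re}\,y}{1+x}-O\bigl(|y|^2\bigr)$, with $|y|=o(1)$ uniformly (Lemma \ref{resonator properties}) and $\sum_p |y_p|^2=o\bigl(\sqrt{\log X/\log\log X}\bigr)$ by \eqref{fourthmoment}, this gives $\gg\mathscr R(\sigma)\exp(\mathscr F_\pi(\sigma)+o(\cdot))$. The $o(\cdot)$ loss is absorbed, since the lemma is only needed up to the precision of \eqref{mainasymp}. Strictly, we must bound the real part of the full truncated sum rather than its modulus, so we argue as follows. Since $r(p)$ is aligned with $a_L(p)$, the terms $\mathrm{Re}(\overline{r(p)}c_1(p))$ are nonnegative up to small cross terms controlled by \eqref{PNT}. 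Moreover, all diagonal terms with $m,n$ supported on $\mathcal P$ have coefficients whose product structure makes the main contribution positive.

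\textbf{Step 3: truncation (main obstacle).} We must show that removing the constraints $k\le X$ and $n\le X$ costs only a proportion $o(1)$ of the product. We use Rankin's trick: the tail where $k>X$ is at most $X^{-\alpha}$ times the product with $|r(p)|^2$ replaced by $|r(p)|^2p^{\alpha}$ (and similarly for the cross terms). Taking $\alpha=(\log\mathscr L)^{-3}$-scale, as in \cite[Lemma 2.1]{Aistleitner2017} and \cite[Section 4]{Heap2024}, the ratio is
\[
\exp\parentheses*{\sum_{p\in\mathcal P}|r(p)|^2(p^\alpha-1)-\alpha\log X}=o(1),
\]
by the choice $\lambda>1$ in $\mathscr L$ and the properties in Lemma \ref{resonator properties}. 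The same computation applies uniformly for $1/2\le\sigma\le1/2+\eta$, since the factors $p^{1-2\sigma}$ only decrease the terms. Combining Steps 1–3 gives $S_\pi(\sigma,T)\gg\mathscr V_\pi(\sigma)$.

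We expect the delicate points to be this uniform truncation estimate and ensuring positivity of the real part of the truncated diagonal sum. As in Heap–Li, we would handle the latter by bounding the tail in absolute value by $o(\mathscr V_\pi(\sigma))$, so that the real part of the full product, which is positive, dominates.
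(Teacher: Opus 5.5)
Your outline is the paper's: remove the absolute value, keep only $\ell=1$ on the diagonal, factor into an Euler product, apply Rankin's trick with $\eta=(\log\mathscr{L})^{-3}$ and $\lambda>1$, and compare $|M|$ with the tail. However, two steps fail as written.

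\textbf{Step 1: the off-diagonal bound.} The claimed decay of the transform of $w$ is false. Since $w(\cdot,T)$ is the indicator of $[6T/5,9T/5]$ convolved with $e^{-u^2}$,
\[
\int_{\mathbb{R}}w(t,T)e^{ixt}\,\mathrm{d}t=\sqrt{\pi}\,e^{-x^2/4}\,\frac{e^{9iTx/5}-e^{6iTx/5}}{ix}.
\]
This is of order $1/|x|$, i.e.\ up to $X$ when $|x|\asymp 1/X$; it is not $O(e^{-\epsilon(Tx)^2})$. So each off-diagonal term is only a factor $X/T$ smaller than a diagonal one. Against a coefficient mass that is polynomial in $X$, the triangle inequality then gives nothing for $\Delta$ near $1/2$. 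Your separation $|\log(k/mn)|\gg T^{-3\Delta}$ is also weaker than what is available: since $k\le X$, one always has $|\log(\ell mn/k)|\gg 1/X$. The paper's fix is to use $w(t,T)\gg 1$ on $[6T/5,9T/5]$ and bound $S_\pi$ from below by the integral against $\Phi(t/T)$, with $\Phi$ smooth and supported there. The off-diagonal terms then carry $\hat\Phi$ evaluated at a point of size $\gg T/X$, which is $\ll_Q (T/X)^{-Q}$.

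\textbf{Step 2: the precision of the lower bound.} You only obtain $S_\pi(\sigma,T)\ge\mathscr{V}_\pi(\sigma)\exp\bigl(-o(\sqrt{\log X/\log\log X})\bigr)$, and you assert the loss is harmless. It is not. In the proof of Proposition \ref{keyprop} this lemma is combined with Lemma \ref{upperboundapprox1} to obtain \eqref{Heath-Brown}, which is then contradicted by \eqref{difference}. Both sides of that comparison live on the scale $\log\log T$, so a loss of size $\exp(o(\sqrt{\log X/\log\log X}))$ destroys the argument; you have not proved the lemma as stated. What you need, and what is true, is $\sum_p|y_p|^2=o(1)$. On $\mathcal{P}$ one has $|c_1(p)|\ll(\log p)^{1-\epsilon}$ and $p\ge\mathscr{L}^2$, so by \eqref{PNT}
\[
\sum_{p\in\mathcal{P}}\frac{|r(p)c_1(p)|^2}{p^{4\sigma-1}}\ll\mathscr{L}^2\sum_{p\ge\mathscr{L}^2}\frac{|a_L(p)|^2}{p^2(\log p)^{2\epsilon}}=o(1).
\]
This is the paper's $E_3$.

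\textbf{The positivity discussion.} You can drop it. $\mathrm{Re}(\overline{r(p)}c_1(p))$ need not be nonnegative termwise when $\pi\neq\{1,\dots,H\}$, and the full product need not be real. Since Step 1 already took absolute values, all that is required is $\prod_p|1+x_p+y_p|\gg\mathscr{V}_\pi(\sigma)$ together with your Rankin bound showing the tail is $o(\mathscr{R}(\sigma))$.
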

	
	\begin{proof}
		Let $0<U<V$ be fixed and observe that
		\begin{align}\label{mainsum0}
				T\mathscr{U}:=\int_{VT}^{UT}\abs*{s_\pi(\sigma+it,T)}\abs*{R(\sigma+it)}^2\mathrm{d}t\geq\abs*{\int_{\mathbb{R}}{s_\pi(\sigma+it,T)}\abs*{R(\sigma+it)}^2\Phi\parentheses*{\frac{t}{T}}\mathrm{d}t},
		\end{align}
		where $\Phi\in C^\infty(\mathbb{R})$ is compactly supported in $[U,V]$ with $\hat{\Phi}(0)\neq0$.
		Repeated integration by parts implies  that 
		$
		\hat{\Phi}(y):=\int_{\mathbb{R}}\Phi(x)e^{-2\pi ixy}\mathrm{d}x\ll_Q(1+|y|)^{-Q}$, $y\in\mathbb{R}$, for any integer $Q\geq1$.
		
		Opening the square in the right-hand side of \eqref{mainsum0} and interchanging summations and integration yields that
			\begin{align}\label{mainsum}
		\frac{1}{T}\int_\mathbb{R}s_{\pi}(\sigma+it,T)\abs*{R(\sigma+it)}^2\Phi\parentheses*{\frac{t}{T}}\mathrm{d}t=\hat{\Phi}(0)\mathop{\sum_{k,\ell,m,n\leq X}}_{\ell mn=k}\frac{c_1(n)c_2(\ell)r(m)\overline{r(k)}}{(n\ell)^\sigma(mk)^{\sigma-1/2}}+E,
		\end{align}
		where
		\[
		E:=\mathop{\sum_{k,\ell,m,n\leq X}}_{\ell mn\neq k}\frac{c_1(n)c_2(\ell)r(m)\overline{r(k)}}{(n\ell)^\sigma(mk)^{\sigma-1/2}}\hat{\Phi}\parentheses*{T\log\frac{\ell mn}{k}}=o(1),
		\]
		as follows from the bound $T\abs*{\log(\ell mn/k)}\gg TX^{-2}\gg T^\epsilon$, valid for any $k,\ell,m,n\leq X$ with $\ell mn\neq k$, the polynomial growth of the coefficients of $E$ and the fast decay of $\hat{\Phi}(y)$.
		
		For the diagonal contribution $\ell mn=k$ that corresponds to the multisum in the right-hand side of \eqref{mainsum} we note that $(\ell,kmn)=1$ by construction.
		Therefore, the only term of the $\ell$-sum that contributes is $c_2(1)=1$ for $\ell=1$.
		Taking also into account that $c_1(n)$ and $r(n)$ are multiplicative, the latter being  supported in square-free integers, we can rewrite the multisum as
		\begin{align}\label{mainsum1}
	\sum_{n\leq X}\frac{\overline{r(n)}c_1(n)}{n^{2\sigma-1/2}}\mathop{\sum_{m\leq \frac{X}{n}}}_{(m,n)=1}\frac{|r(m)|^2}{m^{2\sigma-1}}=M+O(E_1+E_2),
		\end{align}
		where
		\begin{align}
		M:=\sum_{n\geq1}\frac{\overline{r(n)}c_1(n)}{n^{2\sigma-1/2}}\prod_{p\nmid n}\parentheses*{1+\frac{|r(p)|^2}{p^{2\sigma-1}}}=\mathscr{R}(\sigma)\prod_{p}\parentheses*{1+\frac{\overline{r(p)}c_1(p)}{p^{2\sigma-1/2}\parentheses*{1+\frac{|r(p)|^2}{p^{2\sigma-1}}}}},
		\end{align}
		\begin{align*}
E_1:=\sum_{n\leq X}\frac{|c_1(n){r(n)|}}{n^{2\sigma-1/2}}\mathop{\sum_{m> \frac{X}{n}}}_{(m,n)=1}\frac{|r(m)|^2}{m^{2\sigma-1}}\quad\text{ and }\quad E_2:=\sum_{n>X}\frac{|c_1(n){r(n)|}}{n^{2\sigma-1/2}}\sum_{\substack{m\geq1\\(m,n)=1}}\frac{|r(m)|^2}{m^{2\sigma-1}}.
		\end{align*}
		Applying Rankin's trick\footnote{If $a_n\geq0$, $n\geq1$, then $\sum_{n>X}a_n\leq X^{-\alpha}\sum_{n\geq1}a_nn^{\alpha}$ for any positive numbers $X\geq1$ and $\alpha$.} once in the $m$-sum of $E_1$ and once in the $n$-sum of $E_2$ yields that
		\begin{align}
			\begin{split}
		E_1+E_2&\ll X^{-\eta}\prod_p\parentheses*{1+\frac{|r(p)|^2p^\eta}{p^{2\sigma-1}}+\frac{|{r(p)}c_1(p)|p^\eta}{p^{2\sigma-1/2}}}\\
		&\ll X^{-\eta}\mathscr{R}(\sigma)\prod_p\parentheses*{1+\frac{|r(p)|^2}{p^{2\sigma-1}}(p^\eta-1)}\parentheses*{1+\frac{|{r(p)}c_1(p)|p^\eta}{p^{2\sigma-1/2}\parentheses*{1+\frac{|r(p)|^2p^\eta}{p^{2\sigma-1}}}}}.
		\end{split}
		\end{align}
		Since $p^\eta=O(1)$, $p\in\mathcal{P}$, we can employ relations  \ref{resonator properties(i)}, \ref{resonator properties(v)}, \eqref{fractcoeffic}  and \ref{resonator properties(iv)} to show that the logarithm of the above $p$-product is equal to
		\begin{align}\label{mainsum2}
			\eta\parentheses*{\frac{1}{\lambda}+o(1)}\log X.
		\end{align}

		Hence, in view of \eqref{mainsum0}--\eqref{mainsum2}  and since $\lambda>1$ we deduce that
		\[
	\mathscr{U}\gg\mathscr{R}(\sigma)\prod_{p}\abs*{1+\frac{\overline{r(p)}c_1(p)}{p^{2\sigma-1/2}\parentheses*{1+\frac{|r(p)|^2}{p^{2\sigma-1}}}}}+o(1)\mathscr{R}(\sigma).
		\]
		It remains to see that the logarithm of the $p$-product in the right-hand side above equals to
		\begin{align*}
		\mathrm{Re}\parentheses*{\sum_{p}\log\parentheses*{1+\frac{\overline{r(p)}c_1(p)}{p^{2\sigma-1/2}\parentheses*{1+\frac{|r(p)|^2}{p^{2\sigma-1}}}}}}
		=\mathscr{F}_{\pi}(\sigma)+E_3,
		\end{align*}
		where
		\begin{align*}
			E_3\ll\sum_{p\in\mathcal{P}}\frac{|r(p)|^2|c_1(p)|^2}{p^{4\sigma-1}}\ll\mathscr{L}^2\sum_{h\in\pi}\sum_{p\geq\mathscr{L}^2}\frac{|a_{L_h}(p)|^2}{p^2(\log p)^{2\epsilon}}=o(1),
		\end{align*}
		as follows from \eqref{PNT}.
		
	Therefore, $\mathscr{U}\gg\mathscr{R}(\sigma)\exp(\mathscr{F}_\pi(\sigma))$ which suffices to derive the lemma because, in view of the properties of $w(t,T)$, we have that $S_\pi(\sigma,T)\gg\mathscr{U}$ for $U=6T/5$ and $V=9T/5$.
		\end{proof}
		\begin{lem}\label{upperboundapprox1}
			We have,  uniformly for $T\gg1$ and $1/2\leq\sigma\leq1/2+\eta$, that
			\[
			S_\pi(\sigma, T)\ll\mathscr{V}_\pi\parentheses*{\sigma}(\log T)^{q^2\kappa_\pi/2+o(1)}.
			\]
		\end{lem}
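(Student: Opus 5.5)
We bound $S_\pi(\sigma,T)$ from above by Cauchy--Schwarz, splitting $|s_\pi|\,|R|^2$ as $\bigl(|s_\pi|^2|R|^2\bigr)^{1/2}\bigl(|R|^2\bigr)^{1/2}$. This gives
\[
S_\pi(\sigma,T)\le\parentheses*{\frac1T\int_\mathbb{R}|s_\pi(\sigma+it,T)R(\sigma+it)|^2w(t,T)\mathrm{d}t}^{1/2}\parentheses*{\frac1T\int_\mathbb{R}|R(\sigma+it)|^2w(t,T)\mathrm{d}t}^{1/2}.
\]
Both integrands are squares of Dirichlet polynomials of length at most $X^2\le T^{2\Delta}$ with $\Delta<1/2$. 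We apply Lemma \ref{M-V}, or the same Fourier argument used for $E$ in Lemma \ref{lowerboundapprox1}, using $w(t,T)\ll 1$ and the support of $w$ up to exponentially small tails. Each mean value is then bounded by the corresponding sum of squared coefficients times $1+O(X^2/T)$.

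The second factor is at most $\sum_{m}|r(m)|^2m^{1-2\sigma}\le\mathscr{R}(\sigma)$. For the first factor, the coefficient of $s_\pi R$ at $N$ is $\sum_{N=\ell nm}c_1(n)c_2(\ell)r(m)\,m^{1/2}$. Since $c_2$ is supported on primes outside $\mathcal{P}$ and $c_1,r$ are supported on primes inside $\mathcal{P}$, the $\ell$-part factors off. We extend the finite sums to the full multiplicative sums, which only increases them, since all terms are nonnegative after applying $|\cdot|^2$ to an appropriate majorant. The $\ell$-part contributes
\[
\sum_{\ell}|c_2(\ell)|^2\ell^{-2\sigma}\le\prod_{p\notin\mathcal{P},\,p\le X}\Bigl(1+\frac{q^2|a_{L_\pi}(p)|^2}{p}+O(p^{-2+4\theta_\pi})\Bigr)\ll(\log X)^{q^2\kappa_\pi+o(1)}.
\]
Here we use \eqref{PNT}, \eqref{fractcoeffic} and partial summation, with $\sigma\ge1/2$; the tail $p>X$ is irrelevant because of truncation. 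Primes in $\mathcal{P}$ have $\log p\asymp(\log\mathscr{L})^{O(1)}$, so they contribute negligibly in the $\ell$-sum. The $\mathcal{P}$-part is an Euler product over $p\in\mathcal{P}$. The local factor, after expanding $|c_1(p^a)r(p)^bp^{b/2}|^2$ with $b\in\{0,1\}$, is
\[
1+\frac{|r(p)|^2}{p^{2\sigma-1}}+\frac{2\mathrm{Re}(\overline{r(p)}c_1(p))}{p^{2\sigma-1/2}}+O\Bigl(\frac{|c_1(p)|^2(1+|r(p)|^2)}{p^{2\sigma}}\Bigr).
\]

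Dividing by $1+|r(p)|^2p^{1-2\sigma}$ and taking logarithms, the $\mathcal{P}$-part becomes $\mathscr{R}(\sigma)\exp\bigl(2\mathscr{F}_\pi(\sigma)+O(\sum_{p\in\mathcal{P}}|c_1(p)|^2/p)+o(1)\bigr)$. The error is $O(\log\log X)$ restricted to $\mathcal{P}$. Since $\mathcal{P}\subseteq[\mathscr{L}^2,\exp((\log\mathscr{L})^2)]$, it is $\ll q^2\kappa_\pi\log\log\mathscr{L}$, which is $o(\log\log T)$ in the exponent. To handle this honestly, we instead keep the $|c_1(p)|^2/p$ term as part of the $(\log T)^{q^2\kappa_\pi}$ factor over all primes up to $X$. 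Combining the two parts, the first factor is $\ll\mathscr{R}(\sigma)e^{2\mathscr{F}_\pi(\sigma)}(\log T)^{q^2\kappa_\pi+o(1)}$. Taking square roots gives $S_\pi(\sigma,T)\ll\mathscr{V}_\pi(\sigma)(\log T)^{q^2\kappa_\pi/2+o(1)}$.

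The main obstacle is the bookkeeping of the local factors at $p\in\mathcal{P}$. There, $r(p)$ is not small relative to $1$ only through $\mathscr{L}/(\sqrt p\log p)$, which by construction is $o(1)$. The higher prime powers $c_1(p^a)$ with $a\ge2$ must be shown to contribute $O(p^{-2\sigma+4\theta_\pi})$ summably. This uses $\theta_\pi<1/4$ and the near-logarithmic bound on $a_{L_h}(p)$ built into $\mathcal{P}$. Relations \ref{resonator properties(i)}--\ref{resonator properties(iii)} of the appendix will control $\sum_{p\in\mathcal{P}}|r(p)|^4$ and the cross terms uniformly for $1/2\le\sigma\le1/2+\eta$.
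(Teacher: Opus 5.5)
\textbf{There are genuine gaps.} Your local-factor bookkeeping is essentially the paper's. The problem is how you apply Cauchy--Schwarz, which breaks the mean-value step.

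Pairing $|s_\pi R|^2$ with $|R|^2$ forces you to evaluate a mean square of $s_\pi R=P_1P_2R$. This is a Dirichlet polynomial of length up to $X^3=T^{3\Delta}$, not $X^2$ as you claim; already $s_\pi=P_1P_2$ has length $X^2$. Lemma \ref{M-V} then gives the diagonal only up to a factor $1+O(X^3/T)$. The lemma is needed for every fixed $\Delta<1/2$: this is what Proposition \ref{keyprop} assumes, and in the Dirichlet case of Theorem \ref{thm:ourresult0} one takes $\Delta\to D^+$ with $D$ up to $1/2$. For $\Delta>1/3$ the factor $1+O(X^3/T)$ is a positive power of $T$, which cannot be absorbed into $(\log T)^{o(1)}$, and you give no other control of the off-diagonal terms. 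Your argument does work for $\Delta<1/3$, which covers the $\mathrm{GL}(2)$ range, but not the full statement.

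The paper avoids this by splitting instead $|s_\pi|\,|R|^2=|P_1R|\cdot|P_2R|$, so each mean value involves a polynomial of length $X^2=o(T)$. The $P_2R$ factor is $\ll\mathscr{R}(\sigma)\prod_{p\le X}(1+|c_2(p)|^2/p)$, by essentially your coprimality argument. The $P_1R$ factor is $\ll\mathscr{R}(\sigma)e^{2\mathscr{F}_\pi(\sigma)}\prod_{p\in\mathcal{P}}(1+2|c_1(p)|^2/p)$. Their geometric mean is exactly the bound you arrive at.

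Second, your passage from truncated to complete sums in the $\mathcal{P}$-part is not justified. There the diagonal is $\sum_k|\beta_k|^2k^{-2\sigma}$ with $\beta_k=\sum_{nm=k,\,n,m\le X}c_1(n)r(m)m^{1/2}$. These terms carry phases, so enlarging the ranges of $n,m$ is not monotone. For the $\ell$-part monotonicity is fine, since that diagonal is $\sum|c_2(\ell)|^2\ell^{-2\sigma}$.

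If you pass to the majorant $|c_1|,|r|$ to restore monotonicity, the Euler product yields $2\sum_p|r(p)c_1(p)|p^{1/2-2\sigma}$ in place of $2\mathscr{F}_\pi(\sigma)$. Unless $\pi=\{1,\dots,H\}$, where $\overline{r(p)}c_1(p)\ge0$, this can exceed $2\mathscr{F}_\pi(\sigma)$ by a constant multiple of $\sqrt{\log T/\log\log T}$. That destroys the $(\log T)^{O(1)}$ precision on which the contradiction argument in the proof of Proposition \ref{keyprop} relies.

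What is needed is the paper's treatment. Write the truncated diagonal as the complete sum over $m_1n_1=m_2n_2$, minus the quadruples in which some variable exceeds $X$. Bound the latter in absolute value by Rankin's trick with the factor $X^{-\eta}$, using $p^\eta=O(1)$ on $\mathcal{P}$. Then use \ref{resonator properties(v)} together with $\lambda>1$ to see that this error is $o(\mathscr{R}(\sigma))$.
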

		\begin{proof}
			If $\mathscr{U}$ is as before, then H\"older's inequality yields that
			\[
			\mathscr{U}\leq\parentheses*{\frac{1}{T}\int_{UT}^{VT}\abs*{P_1(\sigma+it)}^2\abs*{R(\sigma+it)}^2\mathrm{d}t}^{1/2}\parentheses*{\frac{1}{T}\int_{UT}^{VT}\abs*{P_2(\sigma+it)}^2\abs*{R(\sigma+it)}^2\mathrm{d}t}^{1/2}.
			\]
			Lemma \ref{M-V} implies for both integrals that
		\[
	\mathscr{U}_i:={\frac{1}{T}\int_{UT}^{VT}\abs*{P_i(\sigma+it)R(\sigma+it)}^2\mathrm{d}t}=\parentheses*{1+O\parentheses*{\frac{X^2}{T}}}\sum_{k\leq X^2}\frac{|u_i(k)|^2}{k^{2\sigma}},\quad i\leq2,
		\]
		where
		\[
		|u_i(k)|^2:=\mathop{\sum_{m_1,m_2,n_1,n_2\leq X}}_{m_1 n_1=m_2n_2=k}c_i(m_1)r(n_1)\sqrt{n_1}\overline{c_i(m_2)r(n_2)\sqrt{n_2}}.
		\]
		Observe that when $i=2$, for the integers $m_1,m_2,n_1,n_2$ of the respective multisum we have that $(m_1m_2,n_1n_2)=1$.
		Therefore, the condition $m_1n_1=m_2n_2$ simply implies that $m_1=m_2$ and $n_1=n_2$. 
		Taking also into account the multiplicativity of $c_2(n)$ and $r(n)$, the bounds in \eqref{fractcoeffic} and that $X^2=o(T)$, we can show that
		\begin{align}\label{easybound}
			\begin{split}
		\mathscr{U}_2
		&=(1+o(1))\sum_{k\leq X^2}k^{-2\sigma}\sum_{\substack{m,n\geq1\\mn=k}}|c_2(m)|^2|r(n)|^2n\\
		&\leq(1+o(1))\sum_{n\geq1}\frac{|r(n)|^2}{n^{2\sigma-1}}\sum_{m\geq 1}\frac{|c_2(m)|^2}{m^{2\sigma}}\\
		&=(1+o(1))\mathscr{R}(\sigma)\prod_{p}\parentheses*{\sum_{\nu\geq0}\frac{|c_2(p^\nu)|^2}{p^{2\nu\sigma}}}\\
		&\ll\mathscr{R}(\sigma)\prod_{p}\parentheses*{1+\frac{|c_2(p)|^2}{p}}.
		\end{split}
		\end{align}
		
		Moving on estimating $\mathscr{U}_1$, we observe  that
		$
		\mathscr{U_1}=(1+o(1))M+O(E),
		$
		where
		\begin{align*}
			M:=\sum_{\substack{m_1,m_2,n_1,n_2\geq1\\m_1 n_1=m_2n_2}}\frac{c_1(m_1)\overline{c_1(m_2)}}{(m_1m_2)^\sigma}\frac{r(n_1)\overline{r(n_2)}}{(n_1n_2)^{\sigma-1/2}}
			\end{align*}
			and\begin{align*}
			E&:=\sum_{\substack{m_1,m_2,n_1,n_2\geq1\\m_1 n_1=m_2n_2\\\exists j\leq 2:\,n_j>X\text{ or }m_j>X}}\frac{\abs*{c_1(m_1){c_1(m_2)}r(n_1){r(n_2)}}}{(m_1m_2)^\sigma(n_1n_2)^{\sigma-1/2}}\\
			&\leq X^{-\eta}{\sum_{\substack{m_1,m_2,n_1,n_2\geq1\\m_1 n_1=m_2n_2}}}\frac{\abs*{c_1(m_1){c_1(m_2)}r(n_1){r(n_2)}}(m_1n_1m_2n_2)^{\eta}}{(m_1m_2)^\sigma(n_1n_2)^{\sigma-1/2}},
		\end{align*}
		with the last relation following by Rankin's trick.
		We then employ the multiplicativity of $r(n)$, which is supported in square-free integers, to rewrite the main term $M$ as
		\begin{align*}
			M&=	\sum_{m_1,m_2\geq1}\frac{c_1(m_1)\overline{c_1(m_2)}}{(m_1m_2)^\sigma}\frac{r\parentheses*{\frac{m_2}{(m_1,m_2)}}\overline{r\parentheses*{\frac{m_1}{(m_1,m_2)}}}}{\parentheses*{\frac{m_1m_2}{(m_1,m_2)^2}}^{\sigma-1/2}}\sum_{\substack{j\geq1\\\parentheses*{j,\frac{m_1}{(m_1,m_2)}}=\parentheses*{j,\frac{m_2}{(m_1,m_2)}}=1}}\frac{|r(j)|^2}{j^{2\sigma-1}}\\
			&=\mathscr{R}(\sigma)	\sum_{m_1,m_2\geq1}A(m_1)\overline{A(m_2)}B\parentheses*{\frac{m_2}{(m_1,m_2)}}\overline{B\parentheses*{\frac{m_1}{(m_1,m_2)}}},
		\end{align*}
		where
		\[
		A(m):=\frac{c_1(m)}{m^\sigma}\quad\text{ and }\quad B(m):=\frac{r(m)}{m^{\sigma-1/2}\prod_{p\mid m}\parentheses*{1+\frac{|r(p)|^2}{p^{2\sigma-1}}}}.
		\]
		Since both $A(m)$ and $B(m)$ are multiplicative functions and $B(m)$ is supported on square-free integers, we have that
			\begin{align*}
			M
			&=\mathscr{R}(\sigma)\prod_{p}\sum_{\mu,\nu\geq0}A(p^\nu)\overline{A(p^\mu)}B\parentheses{p^{\mu-\min\set{\mu,\nu}}}\overline{B\parentheses{p^{\nu-\min\set{\mu,\nu}}}}\\
			&=\mathscr{R}(\sigma)\prod_{p}\parentheses*{1+2\mathrm{Re}(A(p)\overline{B(p)})+\sum_{\nu\geq1}\parentheses*{|A(p^\nu)|^2+2\mathrm{Re}\parentheses*{B(p)A(p^\nu)\overline{A(p^{\nu+1})}}}}.
		\end{align*}
		Note that 
		\[
		A(p^\nu)\ll p^{\nu(\theta_\pi-\sigma)}=o(1)\quad\text{ and }\quad B(p)\ll\mathscr{L}\sum_{h\in\pi}\frac{|a_{L_h}(p)|}{p^{\sigma}\log p}\ll\frac{\mathscr{L}}{p^{\sigma}(\log p)^{\epsilon}}=o(1)
		\]
		 as follows from  \eqref{fractcoeffic} and the definition of $r(p)$, $p\in\mathcal{P}$.
		Therefore, we can derive that
		\[
	\sum_{p}\sum_{\nu\geq2}\parentheses*{|A(p^\nu)|^2+2\mathrm{Re}\parentheses*{B(p)A(p^\nu)\overline{A(p^{\nu+1})}}}=o(1).
		\]
	Moreover, $A(p)\ll(\log p)^{1-\epsilon}p^{-\sigma}$, $p\in\mathcal{P}$, whence
		\[
		\sum_{p}{A(p^2)A(p)B(p)}\ll\sum_{p\in\mathcal{P}}\frac{p^{2{(\theta_\pi-\sigma)}}\mathscr{L}}{p^{2\sigma}(\log p)^{2\epsilon-1}}\ll\mathscr{L}\sum_{p\geq\mathscr{L}^2}\frac{1}{p^{3/2}}=o(1).
		\]
		Thus,
		\begin{align*}
			M&=(1+o(1))\mathscr{R}(\sigma)\prod_{p}\parentheses*{1+2\mathrm{Re}(A(p)\overline{B(p)})+|A(p)|^2}\\
			&=(1+o(1))\mathscr{R}(\sigma)\exp\parentheses*{2\mathscr{F}_{\pi}(\sigma)}\prod_{p}\parentheses*{1+\frac{|A(p)|^2}{1+2\mathrm{Re}(A(p)\overline{B(p)})}}.
		\end{align*}
		
		To estimate the error term $E$ we argue exactly as we did for $M$ by keeping in mind that $p^\eta=O(1)$,  $p\in\mathcal{P}$.
		In particular, if we set
		\[
		\tilde{A}(m):=\frac{|c_1(m)|m^\eta}{m^\sigma}\quad\text{ and }\quad \tilde{B}(m):=\frac{|r(m)|m^\eta}{m^{\sigma-1/2}\prod_{p\mid m}\parentheses*{1+\frac{|r(p)|^2p^\eta}{p^{2\sigma-1}}}}
		\]
then we can show that
		\begin{align*}
			 E&\ll X^{-\eta}\brackets*{\prod_{p}\parentheses*{1+\frac{|r(p)|^2p^\eta}{p^{2\sigma-1}}}}\brackets*{\prod_{p}\parentheses*{1+2\tilde{A}(p)\tilde{B}(p)}}\brackets*{\prod_{p}\parentheses*{1+\frac{\tilde{A}(p)^2}{1+2\tilde{A}(p)\tilde{B}(p)}}}\\
			&\ll\frac{\mathscr{R}(\sigma)}{X^\eta}\brackets*{\prod_{p}\parentheses*{1+{|r(p)|^2(p^{\eta}-1)}}}\brackets*{\prod_p\parentheses*{1+\frac{2|c_1(p)r(p)|p^{2\eta}}{\sqrt{p}}}}\prod_{p}\parentheses*{1+\frac{2|c_1(p)|^2p^{2\eta}}{p}}.
		\end{align*}
		In view of \ref{resonator properties(i)}, \ref{resonator properties(v)}, \eqref{fractcoeffic}, \ref{resonator properties(iv)} and \eqref{PNT} we derive that
		\[
		E\leq\mathscr{R}(\sigma)X^{-\eta}\exp\parentheses*{\eta\parentheses*{\frac{1}{\lambda}+o(1)}\log X}.
		\]
		Hence,
		\[
		\mathscr{U_1}=(1+o(1))\mathscr{R}(\sigma)\exp\parentheses*{2F_{\pi}(\sigma)}\prod_{p}\parentheses*{1+\frac{|A(p)|^2}{1+2\mathrm{Re}(A(p)\overline{B(p)})}}+o(1)\mathscr{R}(\sigma),
		\]
		which  together with \eqref{easybound} and \eqref{fractcoeffic} yields that
		\[
	\mathscr{U}\ll\sqrt{\mathscr{U}_1\mathscr{U}_2}\ll	\mathscr{V}_\pi(\sigma)\prod_{p\leq X}\parentheses*{1+(q^2+o(1))\frac{\abs*{\sum_{h\in\pi}a_{L_h}(p)}^2}{p}}^{1/2}.
		\]
		Relation \eqref{PNT} implies that the product on the right-hand side above is bounded by $(\log T)^{q^2\kappa_\pi/2+o(1)}$.
		The lemma follows by noting that $S_\pi(\sigma,T)\ll\mathscr{U}$ for $U=1$ and $V=2$ from the properties of $w(t,T)$.
	\end{proof}
	
	\begin{proof}[Proof of Propostion \ref{keyprop}]
		As we have already discussed in the beginning of the section, in view of \eqref{intermediate} and \eqref{mainasymp} it suffices to show that 
 $I_\pi\parentheses*{\textstyle{\frac{1}{2}},T}\gg	\mathscr{V}_\pi\parentheses*{\textstyle{\frac{1}{2}}}	(\log T)^{-A}$ for some $A>0$.
		
		Since
		\begin{align*}
			\abs*{s_\pi(s,T)}\ll \abs*{L_\pi(s)}^{q}+\abs*{d_\pi(s,T)}^{1/v}
			\quad\text{ and }\quad
			\abs*{d_\pi(s,T)}^{1/v}\ll\abs*{L_\pi(s)}^{q}+\abs*{s_\pi(s,T)},
		\end{align*}
		we have that
		\begin{align}\label{basicinequ}
			S_\pi(\sigma,T)\ll I_\pi(\sigma,T)+D_\pi(\sigma,T)\quad\text{ and }\quad
			D_\pi(\sigma,T)\ll  I_\pi(\sigma,T)+S_\pi(\sigma,T).
			\end{align}

		We then consider two cases. 
		If $D_\pi\parentheses*{\frac{1}{2},T}\leq \sqrt{\mathscr{V}_\pi\parentheses*{\textstyle{\frac{1}{2}}}}$, then \eqref{basicinequ} and Lemma \ref{lowerboundapprox1} readily imply that 
		$
		{\mathscr{V}_\pi\parentheses*{\textstyle{\frac{1}{2}}}}\ll	I_\pi\parentheses*{\textstyle{\frac{1}{2}},T}$.		
		If on the other hand $D_\pi\parentheses*{\frac{1}{2},T}>\sqrt{\mathscr{V}_\pi\parentheses*{\textstyle{\frac{1}{2}}}}$,
		then Lemma \ref{lowerboundapprox2} and \eqref{basicinequ} yield that
		\begin{align}\label{alternative}
			\begin{split}
				D_\pi(\sigma,T)
				&\ll D_\pi({\textstyle{\frac{1}{2}}},T)\mathscr{V}_\pi\parentheses*{\textstyle{\frac{1}{2}}}^{\frac{1/2-\sigma}{2\beta-1}}T^{(1/2-\sigma)\parentheses*{1-1/\beta}{\Delta}/v}+e^{-\epsilon T^{2}}\\
				&\ll \parentheses*{I_\pi({\textstyle{\frac{1}{2}}},T)+S_\pi({\textstyle{\frac{1}{2}}},T)}T^{(1/2-\sigma)\parentheses*{1-1/\beta}{\Delta}/v}+e^{-\epsilon T^{2}}.
			\end{split}
		\end{align}
		In combination with \eqref{basicinequ} and Lemma \ref{Gabriel} we derive that
		\begin{align}\label{prelude}
			\begin{split}
				S_\pi(\sigma,T)&\ll I_\pi({\textstyle{\frac{1}{2}}},T)^{1+\frac{1/2-\sigma}{2\beta-1}}+I_\pi({\textstyle{\frac{1}{2}}},T)T^{(1/2-\sigma)\parentheses*{1-1/\beta}{\Delta}/v}\\
				&\quad+S_\pi({\textstyle{\frac{1}{2}}},T)T^{(1/2-\sigma)\parentheses*{1-1/\beta}{\Delta}/v}+e^{-\epsilon T^{2}}
			\end{split}
		\end{align}
		uniformly for $1/2\leq\sigma\leq\beta$ and $T\gg1$. 
		We now show that for sufficiently large but fixed $B,\beta\gg1$, any $T\gg1$  and $\sigma_T:=1/2+B\log\log T/\log T$
		\begin{align}\label{interlude}
			S_\pi(\sigma_T, T)\geq S_\pi({\textstyle{\frac{1}{2}}},T)T^{(1/2-\sigma_T)\parentheses*{1-1/\beta}{\Delta}/v}\log\log T.
		\end{align}
		For, otherwise, Lemma \ref{lowerboundapprox1} and Lemma \ref{upperboundapprox1} would yield that
		\begin{align*}
			\frac{\mathscr{V}_\pi\parentheses*{\sigma_T}}{\mathscr{V}_\pi\parentheses*{\textstyle{\frac{1}{2}}}}
			&\ll\frac{S_\pi(\sigma_T,T)}{S_\pi({\textstyle{\frac{1}{2}}},T)}(\log T)^{q^2\kappa_\pi/2+o(1)}\ll T^{(1/2-\sigma_T)\parentheses*{1-1/\beta}{\Delta}/v}(\log T)^{q^2\kappa_\pi/2+o(1)},
		\end{align*}
		which implies that
		\begin{align}\label{Heath-Brown}
			\begin{split}
\log \mathscr{V}_\pi({\textstyle{\frac{1}{2}}})-\log \mathscr{V}_\pi(\sigma_T)\geq\parentheses*{\frac{{B\Delta}\parentheses*{1-1/\beta}}{v}-\frac{q^2\kappa_\pi}{2}+o(1)}\log\log T.
			\end{split}
		\end{align}
		On the other hand, in view of  \ref{resonator properties(i)}, \ref{resonator properties(v)} and \ref{resonator properties(vi)} we also have that
		\begin{align}\label{difference}
			\begin{split}
		\log \mathscr{V}_\pi({\textstyle{\frac{1}{2}}})-\log \mathscr{V}_\pi(\sigma_T)
				&=\log\mathscr{R}\parentheses*{\textstyle{\frac{1}{2}}}-\log\mathscr{R}(\sigma_T)+\mathscr{F}_{\pi}\parentheses*{\textstyle{\frac{1}{2}}}-\mathscr{F}_{\pi}(\sigma_T)\\
				&=-\sum_{p}\log\parentheses*{1-\frac{|r(p)|^2(1-p^{1-2\sigma_T})}{1+|r(p)|^2}}+o(1)\\
				&=	(1+o(1)){\sum_p|r(p)|^2(1-p^{1-2\sigma_T})}+o(1)\\
				&=\parentheses*{\frac{B\Delta}{\lambda}+o(1)}\log\log T
			\end{split}
		\end{align}
		which together with $\lambda>v$ contradicts \eqref{Heath-Brown} for sufficiently large but fixed $B,\beta$ and any sufficiently large $T\gg1$. 
		Hence, relation \eqref{interlude} must hold (for such $B$ and $\beta$) and \eqref{prelude} becomes
		\begin{align*}
			S_\pi(\sigma_T,T)
			&\ll I_\pi({\textstyle{\frac{1}{2}}},T)^{1+\frac{1/2-\sigma_T}{2\beta-1}}+I_\pi({\textstyle{\frac{1}{2}}},T)T^{(1/2-\sigma_T)\parentheses*{1-1/\beta}{\Delta}/v}+e^{-\epsilon T^2}\\
			&\ll I_\pi({\textstyle{\frac{1}{2}}},T)\parentheses*{ I_\pi({\textstyle{\frac{1}{2}}},T)^{-\frac{B\log\log T}{(2\beta-1)\log T}}+1}+e^{-\epsilon T^2},
		\end{align*}
		which in combination with \eqref{difference} and Lemma \ref{lowerboundapprox1} yields that
		\[
		(\log T)^{-B\Delta/\lambda+o(1)}\mathscr{V}_\pi({\textstyle{\frac{1}{2}}})= \mathscr{V}_\pi(\sigma_T)\ll S_\pi(\sigma_T,T)\ll I_\pi({\textstyle{\frac{1}{2}}},T).
		\]
		
		Thus, regardless of the size of $D_\pi\parentheses*{\frac{1}{2},T}$, we deduce that
		\begin{align*}
			I_{\pi}\parentheses*{{\textstyle{\frac{1}{2}}},T}\geq\exp\parentheses*{\parentheses{q\kappa_\pi+o(1)}\sqrt{\frac{\log X}{\kappa_L\lambda\log\log X}}}\prod_{p}\parentheses*{1+|r(p)|^2}.
		\end{align*}
	\end{proof}
	\section{Extreme values off the critical line: Proof of Theorem \ref{thm:ourresult1}}\label{secondthm}
	For the proof of Theorem \ref{thm:ourresult1} we will make use of short Dirichlet polynomial approximations of the involved $L$-functions  twisted by a suitable resonating polynomial.
	The existence of such approximations relies on the observation that if an element of $\mathcal{S}$ is zero free in a horizontal strip, then it satisfies therein a bound to the effect of the Lindel\"of Hypothesis.
	It will be practical to distinguish these zero-free strips by means of the following definition. 
	
	\begin{Def}
		For $\alpha\in(0,1)$, $T\gg1$ and an arbitrary number $A\in[5T/4,7T/4]$, let
		\[
		\mathcal{I}=\mathcal{I}(\alpha,T,A):=[A+T^\alpha/2,A+5T^\alpha/2]\subseteq[T,2T].
		\]
		If $L\in\mathcal{S}$ is zero free in $\set{s:\sigma\geq\sigma_0,\,t\in I}$, then we say that $\mathcal{I}$ {\it is $\sigma_0$-good for $L(s)$}. 
	\end{Def}
	The first lemma is a modified version of Littlewood's bound for $\log\zeta(s)$ to the right of the critical line under the RH \cite[Theorem 14.2]{Titchmarsh1986}.
	Although its proof follows with only few adjustments, we repeat it here for the sake of completeness.
	\begin{lem}\label{Lindeloefinstrips}
		Let $L\in\mathcal{S}$.
		For any $T\gg1$ and in any interval $\mathcal{I}$ which is $\sigma_0$-good for $L(s)$, we have that
		\[
		\log L(\sigma+it)\ll(\log T)^{(1-\sigma)/(1-\sigma_0)+\epsilon}
		\]
		uniformly for $\sigma_0+\epsilon\leq\sigma\leq 2$ and $t\in [A+3T^\alpha/4,A+9T^\alpha/4]=:\mathcal{I}_\mu\subseteq\mathcal{I}$.
	\end{lem}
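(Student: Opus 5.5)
We follow the classical Borel--Carathéodory/Hadamard three-circles route of Littlewood's argument \cite[Theorem 14.2]{Titchmarsh1986}, localised to the zero-free box over $\mathcal{I}$. The plan has three steps.

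\textbf{Step 1: a crude upper bound for $\mathrm{Re}\log L$.} Fix $t_0\in\mathcal{I}_\mu$. Since $\mathcal{I}$ is $\sigma_0$-good, $\log L(s)$ is analytic (the branch fixed in Section \ref{classesofL}) on the region $\sigma\geq\sigma_0$, $t\in\mathcal{I}$. This region contains every disc centred at $\sigma_1+it_0$, with $\sigma_1:=2$ say, of radius $\leq 2-\sigma_0$, because $t_0$ is at distance $\gg T^\alpha$ from the ends of $\mathcal{I}$. On such discs \eqref{ordergrowth} gives
\[
\mathrm{Re}\log L(s)=\log|L(s)|\leq C\log T .
\]
At the centre, $|\log L(2+it_0)|\ll 1$ by absolute convergence of the Euler product (condition \ref{Eulerproduct} with $\theta_L<1/2$).

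\textbf{Step 2: Borel--Carathéodory.} We apply it on concentric circles centred at $2+it_0$, with radii $R=2-\sigma_0-\delta$ and $r=2-\sigma_0-2\delta$ for a small fixed $\delta$. This yields
\[
|\log L(s)|\ll_\delta \log T\quad\text{for }|s-(2+it_0)|\leq r ,
\]
in particular for $\sigma\geq\sigma_0+2\delta$ and $t=t_0$ (and nearby $t$). The only novelty compared with the case of $\zeta$ is that the discs must stay inside the strip $\sigma\geq\sigma_0$ and inside $t\in\mathcal{I}$. The margin between $\mathcal{I}_\mu$ and $\mathcal{I}$ is $T^\alpha/4$, which is huge, so the horizontal constraint is harmless. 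The vertical constraint is what forces $\sigma\geq\sigma_0+\epsilon$ rather than $\sigma_0$.

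\textbf{Step 3: interpolation by Hadamard's three-circles theorem.} We use three circles centred at $\sigma_2+it_0$, where $\sigma_2$ is large (for instance $\sigma_2=\log\log T$, or a fixed large number), passing respectively:
\begin{itemize}
\item[(a)] through $\sigma_2-1/2+\ldots$ at the far right, where $|\log L|\ll 2^{-\sigma}$ is tiny, giving $M_1\ll 2^{-\sigma_2}$ or simply $\ll 1$;
\item[(b)] through $\sigma$;
\item[(c)] through $\sigma_0+\delta$, where Step 2 gives $M_3\ll\log T$.
\end{itemize}
Three circles then gives
\[
M_2\leq M_1^{1-a}M_3^{a},\qquad a=\frac{\log(r_2/r_1)}{\log(r_3/r_1)} .
\]
Following Titchmarsh's choice, the radii are $r_1=\sigma_2-\sigma_1'$, $r_2=\sigma_2-\sigma$, $r_3=\sigma_2-\sigma_0-\delta$, with $\sigma_1'$ slightly above $1$. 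As $\sigma_2\to\infty$ this gives $a\to(\sigma_1'-\sigma)/(\sigma_1'-\sigma_0-\delta)$. Taking $\sigma_1'=1+\delta$ and letting $\delta\to0$ with $\epsilon$ yields exponent $(1-\sigma)/(1-\sigma_0)+\epsilon$.

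A more careful version uses $\sigma_2$ fixed but large, together with $M_1\ll1$ because $|\log L(s)|\ll\sum_p p^{\theta_L-\mathrm{Re}s}$ is bounded for $\mathrm{Re}\,s\geq1+\delta$. This is the point where $\theta_L<1/2$ is not enough and one really needs $\sigma_1'>1+\theta_L$; we shall need to check that the $\epsilon$ absorbs this, or else use $|b(p^\nu)|$ bounds on average via $a_L(p)\ll\sqrt p$. I expect this bookkeeping of the exponent, namely getting exactly $(1-\sigma)/(1-\sigma_0)$ up to $\epsilon$ while $\theta_L$ may be positive, to be the main obstacle.

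For $\sigma>1+\delta$ the bound is trivial, and for $\sigma\leq1+\delta$ the above applies uniformly in $t_0\in\mathcal{I}_\mu$.
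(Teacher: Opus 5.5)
Your route is the paper's: Borel--Carath\'eodory on discs centred at $2+i\tau$, then Hadamard's three circles centred at $\sigma_2+it$ with radii $\sigma_2-1-\epsilon$, $\sigma_2-\sigma$ and $\sigma_2-\sigma_0-\epsilon$. The paper takes $\sigma_2=\log\log T$, so that the $O(1/\sigma_2)$ error in the exponent $a$ costs only a factor $(\log T)^{O(1/\log\log T)}=O(1)$.

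There is, however, a gap, and it sits at the step you call the main obstacle. Your majorant $\sum_p p^{\theta_L-\mathrm{Re}\,s}$ diverges for $\mathrm{Re}\,s\le1+\theta_L$, so it does not give $M_1\ll1$. Your fallback $\sigma_1'>1+\theta_L$ would change the exponent to roughly $(1+\theta_L-\sigma)/(1+\theta_L-\sigma_0)$. Since $\sigma>\sigma_0$, this is strictly larger than $(1-\sigma)/(1-\sigma_0)$ and is not absorbed by $\epsilon$. The missing observation is that the pointwise bound $b(p^\nu)\ll p^{\nu\theta_L}$ is only needed for $\nu\ge2$:
\begin{itemize}
\item For the prime terms, $b(p)=a_L(p)$ and $\sum_p|a_L(p)|p^{-\sigma}\le\sum_n|a_L(n)|n^{-\sigma}<\infty$ for $\sigma>1$, by the absolute-convergence axiom of $\mathcal{S}$.
\item For the higher powers, $\sum_p\sum_{\nu\ge2}|b(p^\nu)|p^{-\nu\sigma}\ll\sum_p p^{-2(\sigma-\theta_L)}<\infty$ for $\sigma\ge1$, precisely because $\theta_L<1/2$.
\end{itemize}
Both sums decrease in $\sigma$, so $|\log L(s)|\ll_\delta1$ uniformly on $\mathrm{Re}\,s\ge1+\delta$. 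This is what the paper means by saying $C_1$ lies in the half-plane of absolute convergence of the Euler product. Hence $\sigma_1'=1+\delta$ is legitimate, and $a=(1-\sigma)/(1-\sigma_0)+O(\delta+1/\sigma_2)$ as required.

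Two bookkeeping points also need fixing.
\begin{itemize}
\item Your Borel--Carath\'eodory step only gives $|\log L|\ll\log T$ for $\sigma\ge\sigma_0+2\delta$, yet your outer circle passes through $\sigma_0+\delta$. Take $r_3=\sigma_2-\sigma_0-2\delta$, or match the radii as the paper does with $\epsilon/2$ and $\epsilon$.
\item To get $M_3\ll\log T$, split $C_3$ into two parts. On $\mathrm{Re}\,z\ge1+\delta$, use the bound from absolute convergence above. On the remaining part, each point $x+iy$ lies in the Borel--Carath\'eodory disc centred at $2+iy$. Since $|y-t_0|$ can be as large as $\sigma_2$, you need Step 2 at every height in a $\sigma_2$-neighbourhood of $\mathcal{I}_\mu$; this is exactly the role of the paper's enlarged interval $\mathcal{J}$.
\end{itemize}
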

	\begin{proof}
		By assumption $\log L(s)$ is analytic in $\set{s:\sigma>\sigma_0,\,t\in\mathcal{I}^o}\subset\Omega_L$.
		Thus, if we let $\tau\in\mathcal{J}:=[A+3T^\alpha/4-2\log\log T,A+9T^\alpha/4+2\log\log T]\subset\mathcal{I}$ and $D_1(\tau)$ and $D_2(\tau)$ denote two closed disks of center $2+i\tau$ and radii $2-\sigma_0-\epsilon$  and $2-\sigma_0-\epsilon/2$, respectively, then $D_1(\tau),D_2(\tau)\subset\Omega_L$ and, thus, the Borel-Carath\'eodory theorem, the absolute convergence of the Euler product of $L(s)$ for $\sigma>1$ and the polynomial growth \eqref{ordergrowth} of $L(s)$  yield that
		\begin{align}\label{Borel-Cara}
			\begin{split}
				\max_{s\in D_1(\tau)}|\log L(s)|
				&\leq\frac{4-2\sigma_0-2\epsilon}{{\epsilon}/{2}}\max_{s\in D_2(\tau)}\log|L(s)|+\frac{4-2\sigma_0-{3\epsilon}/{2}}{{\epsilon}/{2}}|\log L(2+it)|\ll\log T.
			\end{split}
		\end{align}
		If now $t \in\mathcal{I}_\mu$, we apply the Hadamard three circle theorem in the circles $C_1,C_2,C_3\subset\Omega_L$ of center $\log\log T+it=:\sigma_1+it$ and radii $\sigma_1-1-\epsilon$, $\sigma_1-\sigma$ and $\sigma_1-\sigma_0-\epsilon$, respectively,
		\begin{align}\label{Hadamard3}
			\max_{s\in C_2}|\log L(s)|\ll\parentheses*{\max_{s\in C_3}|\log L(s)|}^{a}\parentheses*{\max_{s\in C_1}|\log L(s)|}^{1-a},
		\end{align}
		where
		\begin{align}\label{Hadamardexp}
			a=\parentheses*{\log\frac{\sigma_1-\sigma}{\sigma_1-1-\epsilon}}\parentheses*{\log\frac{\sigma_1-\sigma_0-\epsilon}{\sigma_1-1-\epsilon}}^{-1}=\frac{1-\sigma}{1-\sigma_0}+O\parentheses*{\epsilon+\frac{1}{\sigma_1}}.
		\end{align}
		Observe that the sets $C_1$ and $C_3\cap\set{z:\mathrm{Re} z\geq1+\epsilon}$ lie in the half-plane of absolute convergence of the Euler product of $ L(s)$.
		Hence, 
		\[
		\max_{s\in C_1}|\log L(s)|\ll1\quad\text{ and }\quad
		\max_{s\in C_3\cap\set{z:\mathrm{Re} z\geq1+\epsilon}}|\log L(s)|\ll1.
		\]
		Moreover, if $x+iy\in C_3\cap\set*{z:\sigma_0+{\epsilon}\leq\mathrm{Re} z\leq 1+\epsilon}$, then $y\in\mathcal{J}$ and  $x+iy\in D_1(y)$.
		Thus, in view of \eqref{Borel-Cara} we derive that
		\[
		\max_{s\in C_3\cap\set*{z:\sigma_0+{\epsilon}\leq\mathrm{Re} z\leq 1+\epsilon}}|\log L(s)|\ll\log T.
		\]
		The lemma follows now from  relations \eqref{Hadamard3}, \eqref{Hadamardexp} and the above bounds.
	\end{proof}
	We introduce next the Dirichlet polynomial that will resonate with the $L$-functions $L_1,\dots,L_{H+G}$ from Theorem \ref{thm:ourresult1}, namely elements of $\mathcal{S}_{\text{poly}}$ which satisfy the Ramanujan Hypothesis and \eqref{PNT0}.
	Recall in particular that $a_{L_h}(p)\ll1$ for any prime $p$ and $h\leq H+G$.
		Perhaps the same resonator as on the critical line could be implemented here but it would make the proof more technical without any real gain other than a larger constant $D$ in the exponential.
		We opted for a simpler resonator as devised by Voronin \cite[VIII, \S2]{Karatsuba1992} and (independently) Hilberdink \cite{Hilberdink2009}.
	Let $0<\beta<{\alpha}/{2}$ and
	\[
	R(s):=\prod_{p}(1+r(p)p^{-s})=\sum_{n\geq 1}r(n)n^{-s}
	\] 
	with $r(n)$ being a multiplicative function supported on square-free integers and defined by
	\begin{align*}
		Cr(p):=\sum_{h\leq H}{a_{L_h}(p)}-\sum_{H<h\leq H+G}{{a_{L_h}(p)}},\quad p\in\mathcal{P}:=\set{x_\epsilon\leq p\leq\beta\log T:p\text{ prime}},
	\end{align*}
	and by $0$ for any other prime $p$.
	The constant $C\gg H+G$ will be chosen sufficiently large later on.
	Moreover, $x_\epsilon$ is fixed large enough so that
	\begin{align}\label{orthonormality}
		\max_{h\leq H+G}\set*{\abs*{\sum_{p\leq x}|a_{L_h}(p)|^2-\frac{\kappa_hx}{\log x}},\abs*{\sum_{h\neq g\leq H+G}\sum_{p\leq x}a_{L_h}(p)a_{L_g}(p)}}\leq \frac{\epsilon x}{\log x},\quad x\geq x_\epsilon,
	\end{align}
	as follows from \eqref{PNT0} with $\kappa_h:=\kappa(h,h)>0$.
	We can then prove the following lemma.
	\begin{lem}\label{computing}
		Let $\sigma\in\parentheses*{0,1}$.
		There are $B_h>0$, $h\leq H+G$, such that for any $T\gg1$
		\[
		(-1)^{\mathrm{j}(h)}\sum_{p}\frac{\mathrm{Re}\parentheses{\overline{r(p)}a_{L_h}(p)}}{(1+|r(p)|^2)p^\sigma}\leq -B_h\frac{(\log T)^{1-\sigma}}{\log\log T},
		\]
		where $\mathrm{j}(h)=1$ if $h\leq H$ and $\mathrm{j}(h)=0$ if $H<h\leq H+G$.
		For $\sigma=1$, the right-hand side of the above relation ought to be replaced by $B_h\log\log\log T$.
	\end{lem}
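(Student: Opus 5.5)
Since $r(p)$ is supported on $\mathcal{P}=[x_\epsilon,\beta\log T]$ and all coefficients are bounded (Ramanujan Hypothesis in $\mathcal{S}_{\text{poly}}$), I will first remove the denominator $1+|r(p)|^2$. Taking $C\gg H+G$ gives $|r(p)|\leq (H+G)\max_h\partial_{L_h}/C$, so $1\leq 1+|r(p)|^2\leq 1+\delta$ with $\delta$ small. Next I expand the numerator. With $\varepsilon_g:=1$ for $g\leq H$ and $\varepsilon_g:=-1$ for $g>H$, we have
\[
\overline{r(p)}a_{L_h}(p)=\frac1C\Big(\varepsilon_h|a_{L_h}(p)|^2+\sum_{g\neq h}\varepsilon_g\overline{a_{L_g}(p)}a_{L_h}(p)\Big).
\]
Multiplying by $(-1)^{\mathrm{j}(h)}=-\varepsilon_h$, the diagonal term becomes $-|a_{L_h}(p)|^2/C\leq0$. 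This sign bookkeeping is what makes the $G$ functions come out small and the $H$ functions large.

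The plan is to split the sum as
\[
\sum_{p\in\mathcal{P}}\frac{-|a_{L_h}(p)|^2}{C(1+|r(p)|^2)p^\sigma}+\sum_{p\in\mathcal{P}}\frac{\text{off-diagonal}}{C(1+|r(p)|^2)p^\sigma}.
\]
For the diagonal part I drop the denominator by bounding $1+|r(p)|^2\leq1+\delta$ (the terms have a fixed sign). Partial summation on $\sum_{p\leq x}|a_{L_h}(p)|^2=(\kappa_h+O(\epsilon))x/\log x$, valid for $x\geq x_\epsilon$ by \eqref{orthonormality}, then gives
\[
\sum_{x_\epsilon\leq p\leq \beta\log T}\frac{|a_{L_h}(p)|^2}{p^\sigma}=(\kappa_h+O(\epsilon))\frac{(\beta\log T)^{1-\sigma}}{(1-\sigma)\log\log T}(1+o(1))
\]
for $\sigma<1$, and $(\kappa_h+O(\epsilon))\log\log\log T$ for $\sigma=1$.

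The main obstacle is the off-diagonal part, because the weight $(1+|r(p)|^2)^{-1}$ depends on $p$ and the cross terms have no fixed sign. I would write $(1+|r(p)|^2)^{-1}=1-|r(p)|^2/(1+|r(p)|^2)$. The "$1$" part is handled by partial summation using the second bound in \eqref{orthonormality}, which makes the cross sums $O(\epsilon x/\log x)$. Here I assume \eqref{orthonormality} is meant with the conjugate $a_{L_h}\overline{a_{L_g}}$, as in \eqref{PNT0}, and holds pairwise. This part therefore contributes $O(\epsilon)$ times the main size. The remainder is bounded trivially by $\ll\sum_{p\in\mathcal P}|r(p)|^2p^{-\sigma}/C\ll C^{-3}\sum_{p\leq\beta\log T}p^{-\sigma}$. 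This is again of the main order $(\log T)^{1-\sigma}/\log\log T$, but with a factor $C^{-3}$ against $\kappa_h/C$ for the diagonal term. Choosing first $C$ large (depending on $H+G$ and the $\partial_{L_h}$) and then $\epsilon$ small (which fixes $x_\epsilon$) makes the total off-diagonal contribution at most half of the diagonal one.

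Combining the two parts gives the claim with $B_h\asymp\kappa_h\beta^{1-\sigma}/(C(1-\sigma))$ for $\sigma<1$, and $B_h\asymp\kappa_h/C$ for $\sigma=1$. For $\sigma=1$ the same argument applies with $\sum p^{-1}$ replaced by Mertens-type $\log\log\log T$ asymptotics, and the inequality is to be read with the sign reversed, i.e. the magnitude $B_h\log\log\log T$.
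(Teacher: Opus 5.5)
Your proposal is correct and is essentially the paper's proof. You expand $\overline{r(p)}$, isolate the diagonal term $|a_{L_h}(p)|^2/C$ (which carries the favourable sign after multiplying by $(-1)^{\mathrm{j}(h)}$), evaluate it by partial summation from \eqref{orthonormality}, bound the cross terms via the pairwise, conjugated orthogonality coming from \eqref{PNT0} plus an $O(|r(p)|^2)=O(C^{-2})$ correction, and then fix $C$ large and $\epsilon$ small. The differences are cosmetic: the paper writes $(1+|r(p)|^2)^{-1}=1+O(|r(p)|^2)$ where you write $1-|r(p)|^2/(1+|r(p)|^2)$, and your explicit factor $1/(1-\sigma)$ (which the paper silently absorbs into $B_h$) and your reading of the case $\sigma=1$ with right-hand side $-B_h\log\log\log T$ are the intended ones.
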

	\begin{proof}
		Assume first that $h\leq H$.
		By the definition of $r(p)\ll\frac{1}{C}$, we see that
		\begin{align}\label{splitting}
			\begin{split}
				C\sum_{p}\frac{\mathrm{Re}\parentheses{\overline{r(p)}a_{L_h}(p)}}{(1+|r(p)|^2)p^\sigma}
				&=\sum_{p\in\mathcal{P}}\frac{|a_{L_h}(p)|^2}{(1+|r(p)|^2)p^\sigma}+\mathrm{Re}\parentheses*{\sum_{h\neq g\leq H}\sum_{p\in\mathcal{P}}\frac{a_{L_h}(p)\overline{a_{L_g}(p)}}{(1+|r(p)|^2)p^\sigma}}\\
				&\quad-\mathrm{Re}\parentheses*{\sum_{H< g\leq H+G}\sum_{p\in\mathcal{P}}\frac{a_{L_h}(p)\overline{a_{L_g}(p)}}{(1+|r(p)|^2)p^\sigma}}.
			\end{split}
		\end{align}
		In view of \eqref{orthonormality} and partial summation, the first sum in the right-hand side above equals to
		\begin{align}\label{Selberg1}
			\begin{split}
				\parentheses*{1+O\parentheses*{\frac{1}{C^2}}}\sum_{p\in\mathcal{P}}\frac{|a_{L_h}(p)|^2}{p^\sigma}&=\parentheses*{1+O\parentheses*{\frac{1}{C^2}}}\frac{(\kappa_h+O(\epsilon))(\beta\log T)^{1-\sigma}}{\log\log T}.
			\end{split}
		\end{align}
		Similarly, the first double sum in the right-hand side of \eqref{splitting} can be seen to be
		\begin{align}\label{Selberg2}
			\begin{split}
				\sum_{h\neq g\leq H}\sum_{p\in\mathcal{P}}\frac{a_{L_h}(p)\overline{a_{L_g}(p)}}{p^\sigma}\parentheses*{1+O(|r(p)|^2)}
				&\ll\frac{\epsilon(\beta\log T)^{1-\sigma}}{\log\log T}+\frac{1}{C^2}\sum_{p\leq\beta\log T}\frac{1}{p^\sigma}\\
				&\ll\parentheses*{\epsilon+\frac{1}{C^2}}\frac{(\beta\log T)^{1-\sigma}}{\log\log T},
			\end{split}
		\end{align}
		while the second double sum in the right-hand side of \eqref{splitting} is bounded as above for the same reason.
		The first part of the lemma for $h\leq H$ follows now from  \eqref{splitting}--\eqref{Selberg2} by fixing sufficiently large $C\gg 1$.
		If $H<h\leq H+G$, we obtain completely analogously the same result because the main contribution will still be coming from a sum of the shape of \eqref{Selberg1}.
		The case of $\sigma=1$ is treated in a similar manner.
	\end{proof}
	
	\begin{prop}\label{technical}
		Let $L,\tilde{L}\in\mathcal{S}_{\text{poly}}$ satisfy the Ramanujan Hypothesis and $\sigma\in(\sigma_0,1]$.
		For any $T\gg1$ and in any interval $\mathcal{I}$ which is $\sigma_0$-good for $L\tilde{L}(s)$, we have that
		\begin{align*}
			\mathscr{I}:=\frac{1}{T^\alpha}\int_{A+T^\alpha}^{A+2T^\alpha}\abs*{\frac{L(\sigma+it)}{\tilde{L}(\sigma+it)}R(it)}^2\mathrm{d}t\ll{\prod_p\parentheses*{1+|r(p)|^2+{2p^{-\sigma}\mathrm{Re}\parentheses{\overline{r(p)}a_{L/\tilde{L}}(p)}}}}.
		\end{align*}
	\end{prop}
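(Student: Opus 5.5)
Inside the zero-free strip I would replace $L/\tilde{L}(\sigma+it)$ by a short Dirichlet polynomial and then evaluate the twisted mean square with Lemma \ref{M-V}.

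\textbf{Step 1: short Dirichlet polynomial approximation.} Since $\mathcal{I}$ is $\sigma_0$-good for $L\tilde{L}$, both $L$ and $\tilde{L}$ are zero free in $\set{s:\sigma\geq\sigma_0,\,t\in\mathcal{I}}$. Lemma \ref{Lindeloefinstrips} then gives $|\log L|,|\log\tilde{L}|\ll(\log T)^{(1-\sigma')/(1-\sigma_0)+\epsilon}$ on $\mathcal{I}_\mu$ for $\sigma'\geq\sigma_0+\epsilon$. In particular $F(s):=L(s)/\tilde{L}(s)$ satisfies $F(s)\ll T^{\epsilon}$ there, a Lindelöf-type bound.

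Set $Y:=T^{\delta}$ with $\delta$ small (I will need $\delta<\alpha$, and the length $\sum_{n\leq T^{\beta'}}$ of $R(it)$ is at most $e^{(1+o(1))\beta\log T}=T^{\beta+o(1)}<T^{\alpha/2}$). Apply Perron's formula with a smooth weight:
\[
\sum_{n}a_{L/\tilde L}(n)n^{-s}e^{-n/Y}=\frac{1}{2\pi i}\int_{(c)}F(s+w)\Gamma(w)Y^w\,\mathrm{d}w .
\]
Shift the contour to $\mathrm{Re}(s+w)=\sigma_0+\epsilon$. The residue at $w=0$ gives $F(s)$, and the gamma decay lets the truncation $|\mathrm{Im} w|\leq(\log T)^2$ stay inside $\mathcal{I}$ for $t\in[A+T^\alpha,A+2T^\alpha]$. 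The remaining integral is $\ll Y^{\sigma_0+\epsilon-\sigma}T^{\epsilon}=T^{-\delta(\sigma-\sigma_0-\epsilon)+\epsilon}$, a negative power of $T$ once $\epsilon$ is small relative to $\delta(\sigma-\sigma_0)$. Hence $F(\sigma+it)=P(\sigma+it)+O(T^{-c})$ with $P(s)=\sum_n a_{L/\tilde L}(n)e^{-n/Y}n^{-s}$.

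\textbf{Step 2: mean square of $PR$.} The error term contributes $T^{-2c}\,T^{-\alpha}\int|R(it)|^2\ll T^{-2c}\prod_p(1+|r(p)|^2)$. This is negligible against the right-hand side, because $r(p)\ll 1/C$, $a_{L/\tilde L}(p)\ll1$ by Ramanujan, and hence each factor of the claimed product is $\geq(1-O(1/C))(1+|r(p)|^2)$ over $p\leq\beta\log T$, so the product is at least $T^{-o(1)}\prod_p(1+|r(p)|^2)$ once $C$ is large.

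For the main term, $P(\sigma+it)R(it)=\sum_k b(k)k^{-it}$ with $b(k)=\sum_{mn=k}a_{L/\tilde L}(m)e^{-m/Y}m^{-\sigma}r(n)$. I would apply Lemma \ref{M-V} (shifting $t$ by $A+T^\alpha$) to get
\[
\mathscr{I}\ll\sum_k|b(k)|^2\Bigl(1+\frac{k}{T^\alpha}\Bigr).
\]
The tail $k>T^{\alpha}$ is handled by the exponential weight together with $R$ having length $T^{\beta+o(1)}$, $\beta<\alpha/2$, and Rankin's trick.

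Then I would drop the smoothing and bound $\sum_k|b(k)|^2$ by the Euler product $\sum_{m_1n_1=m_2n_2}$ as in Lemma \ref{upperboundapprox1}. Since $r$ is supported on squarefree $n$, the local factor at $p$ equals $1+|r(p)|^2+2\mathrm{Re}(\overline{r(p)}a_{L/\tilde L}(p))p^{-\sigma}+O(p^{-2\sigma})$ on $\mathcal{P}$, and $\sum_\nu|a_{L/\tilde L}(p^\nu)|^2p^{-2\nu\sigma}$ off $\mathcal{P}$.

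\textbf{Main obstacle.} The factors coming from $|a_{L/\tilde L}(p^\nu)|^2p^{-2\nu\sigma}$ multiply to about $\prod_{p\leq Y}(1+O(p^{-2\sigma}))$. For $\sigma>1/2$ this converges, so it is $O(1)$ uniformly, and similarly the cross terms involving $p^2$ only add $O(\sum_p p^{-2\sigma})=O(1)$. What remains is to check that the $O(p^{-2\sigma})$ perturbations inside $\mathcal{P}$ change the product by at most a bounded factor. This holds because $1+|r(p)|^2+2p^{-\sigma}\mathrm{Re}(\cdots)\geq 1/2$ for $C$ large and $p\geq x_\epsilon$. This gives the claimed bound with an absolute implied constant.
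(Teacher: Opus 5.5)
Your argument is correct in outline but takes a different route from the paper. The paper applies Perron's formula with a sharp cutoff at $Y=T^\alpha/5$ to the product $F_\sigma(z)=\frac{L}{\tilde L}(z)R(z-\sigma)$, whose coefficients $f_\sigma(n)=\sum_{n_1n_2=n}a_{L/\tilde L}(n_1)r(n_2)n_2^{\sigma}$ are multiplicative. After Lemma \ref{M-V} it is left with $\sum_{n\le T^\alpha}|f_\sigma(n)|^2n^{-2\sigma}$, which by positivity is at most $\prod_p\sum_\nu|f_\sigma(p^\nu)|^2p^{-2\nu\sigma}$, so no truncation error ever enters the mean value. The price is that the contour shift must also control $R(z-\sigma)$ for $\mathrm{Re}\,z<\sigma$. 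There $R$ can be as large as $T^{\beta\epsilon+o(1)}$, and the approximation is only good because $Y^{-\epsilon}$ beats this, using $\beta<\alpha$. You smooth $L/\tilde L$ alone, so the shift needs nothing beyond Lemma \ref{Lindeloefinstrips} and the approximation error is a clean negative power of $T$. You correctly verify that this error is negligible via the lower bound $\ge T^{-o(1)}\prod_p(1+|r(p)|^2)$ for the target product. The final Euler-product bookkeeping, absorbing the $O(p^{-2\sigma})$ terms using that the main local factor is $\ge 1/2$, is the same as in the paper.

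The cost of your route is that $e^{-m/Y}$ makes $b(k)$ non-multiplicative, so \emph{dropping the smoothing} needs an actual argument. It cannot be done via $|b(k)|\le\sum_{mn=k}|a_{L/\tilde L}(m)|m^{-\sigma}|r(n)|$. That only gives the product with $2p^{-\sigma}|r(p)a_{L/\tilde L}(p)|$ in place of the signed term $2p^{-\sigma}\mathrm{Re}(\overline{r(p)}a_{L/\tilde L}(p))$. This is not the claimed bound, and it would kill the saving used in the proof of Theorem \ref{thm:ourresult1}.

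Instead, fix $0<\eta<\sigma-1/2$ and write $e^{-m/Y}=1-(1-e^{-m/Y})$ with $0\le 1-e^{-m/Y}\le(m/Y)^{\eta}$. This gives $b=b_0-b_1$, where $b_0(k)=k^{-\sigma}f_\sigma(k)$ is multiplicative and $|b_1(k)|\le Y^{-\eta}\sum_{mn=k}|a_{L/\tilde L}(m)|m^{\eta-\sigma}|r(n)|$.
\begin{itemize}
\item $\sum_k|b_0(k)|^2$ is exactly the Euler product with the signed local factors you wrote down.
\item $\sum_k|b_1(k)|^2\le Y^{-2\eta}\prod_p\parentheses{1+|r(p)|^2+2|r(p)a_{L/\tilde L}(p)|p^{\eta-\sigma}+O(p^{2\eta-2\sigma})}$. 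The primes in $\mathcal{P}$ inflate this relative to the signed product by only $\exp(O((\log T)^{1-\sigma+\eta}))=T^{o(1)}$. By the same lower bound you already used, it is therefore $T^{-2\delta\eta+o(1)}$ times the target.
\end{itemize}
With $|b|^2\le 2|b_0|^2+2|b_1|^2$, this completes your proof.
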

	\begin{proof}
		Define first the function
		\[
		F_\sigma(z):=	\frac{L(z)}{{\tilde{L}(z)}}R(z-\sigma)={\sum_{n\geq1}{a_{L/{\tilde{L}}}(m)}{m^{-z}}}{\sum_{n\geq1}{r(n)n^\sigma}{ n^{-z}}}=:\sum_{n\geq1}{f_\sigma(n)}{n^{-z}},\quad\mathrm{Re}z>1.
		\]
		Then  Perron's formula  \cite[Lemma 3.12]{Titchmarsh1986} yields that
		\[
		\sum_{n\leq Y}f_\sigma(n){n^{-s}}=\frac{1}{2\pi i}\int_{\frac{1}{2}-iY}^{\frac{1}{2}+iY}F_\sigma(s+w)\frac{Y^w}{w}\mathrm{d}w+O\parentheses*{\frac{1}{\sqrt{Y}}},\quad t\in[A+T^\alpha,A+2T^\alpha],
		\]
		where $Y:= T^\alpha/5$ is, without loss of generality, half an odd integer.
		Observe that $F(s+w)$ can be continued analytically as a function of $w$ inside the rectangle $\sigma_0-\sigma<\mathrm{Re}w<1$, $|\mathrm{Im}w|<Y$.
		Hence, by moving the path of integration to the line segment with endpoints $-\epsilon\pm iX$ we pick up only a simple pole at $w=0$ of residue $F_\sigma(s)$ and we obtain that
		\begin{align*}
			F_\sigma(s)-\sum_{n\leq Y}f_\sigma(n){n^{-s}}+o(1)
			&=\frac{1}{2\pi i}\braces*{\int_{\frac{1}{2}+iY}^{-\epsilon+iY}+\int^{\frac{1}{2}-iY}_{-\epsilon-iY}+\int_{-\epsilon+iY}^{-\epsilon-iY}}F_\sigma(s+w)\frac{Y^w}{w}\mathrm{d}w\\
			&\ll\max\set*{\abs{F_\sigma(u+iv)}:\sigma-\epsilon\leq u\leq\sigma+\frac{1}{2},\,v\in\mathcal{I}_\mu},
		\end{align*}
		where $\mathcal{I}_\mu$ is as in Lemma \ref{Lindeloefinstrips}.
		Since the interval $\mathcal{I}$ is $\sigma_0$-good for $L$ and ${\tilde{L}}$, we can  employ the aforementioned lemma to show that
		\[
		\max\set*{\abs*{\frac{L(u+iv)}{{\tilde{L}(u+iv)}}}:\sigma-\epsilon\leq u\leq\sigma+\frac{1}{2},\,v\in\mathcal{I}_\mu}\leq T^{o(1)}.
		\]
		Moreover $N:=\max\mathrm{supp}(r)\ll T^{\beta+o(1)}$ by the prime number theorem and, therefore, we have  uniformly for $u\geq \sigma-\epsilon$ and $t\in\mathbb{R}$ that
		\[
		R(u+iv-\sigma)\ll\sum_{n\leq N}|r(n)|n^{\epsilon}\ll T^{(\beta+o(1))(1+\epsilon)}.
		\]
		Hence,
		\[
		\frac{L(\sigma+it)}{\tilde{L}(\sigma+it)}R(it)=F_\sigma(\sigma+it)=\sum_{n\leq Y}f_\sigma(n)n^{-\sigma-it}+O\parentheses*{T^{\beta+\epsilon}},\quad t\in[A+T^\alpha,A+2T^\alpha].
		\]
		We then insert the above Dirichlet polynomial approximation in $\mathscr{I}$, apply the Cauchy-Schwarz inequality to separate the main term from the error term, apply further Lemma \ref{M-V}  to the main term and lastly appeal to the multiplicativity of $f_\sigma(n)$, to conclude that
		\begin{align}\label{multiprod}
			\mathscr{I}\ll\sum_{n\leq T^{\alpha}}\frac{|f_\sigma(n)|^2}{n^{2\sigma}}\ll\prod_{p\leq T^{\alpha}}\parentheses*{1+\frac{|f_\sigma(p)|^2}{p^{2\sigma}}+\sum_{\nu\geq2}\frac{|f_\sigma(p^\nu)|^2}{p^{2\nu\sigma}}}.
		\end{align}
		Observe that for any prime $p$ and integer $\nu\geq1$
		\[
		f_\sigma(p^\nu)=\sum_{\substack{n_1,n_2\geq1\\n_1n_2=p^\nu}}a_{L/\tilde{L}}(n_1)r(n_2)n_2^\sigma=a_{L/\tilde{L}}(p^\nu)+a_{L/\tilde{L}}(p^{\nu-1})r(p)p^\sigma\ll p^{(\nu-1)\epsilon+\sigma}
		\]
		and, in particular,
		\[
		|f_\sigma(p)|^2\leq2|a_{L}(p)|^2+2|a_{\tilde{L}}(p)|^2+|r(p)|^2p^{2\sigma}+2p^\sigma\mathrm{Re}\parentheses{\overline{r(p)}a_{L/\tilde{L}}(p)}.
		\]
		Since $W(p):=1+|r(p)|^2+2p^{-\sigma}\mathrm{Re}\parentheses{\overline{r(p)}a_{L/\tilde{L}}(p)}> \frac{1}{2}$ and
		\[
		\sum_{p\geq1}\frac{|a_L(p)|^2+|a_{\tilde{L}}(p)|^2}{p^{2\sigma}}+\sum_{p\geq1}\sum_{\nu\geq2}p^{2(\nu-1)(\epsilon-\sigma)}=O(1),
		\]
		we may take a factor $W(p)$ out from each factor of the product in \eqref{multiprod} and derive the statement of the lemma.
	\end{proof}
	We are now in position to prove Theorem \ref{thm:ourresult1}.
	\begin{proof}[Proof of Theorem \ref{thm:ourresult1}]
		Let $\sigma\in(\sigma_0,1]$ be fixed and 
		\begin{align*}
			Z:=\left\{
			\begin{array}{ll}
				{D}\frac{(\log T)^{1-\sigma}}{\log\log T},&\sigma\neq1,\\
				D\log\log\log T,&\sigma=1,
			\end{array}
			\right.
		\end{align*}
		where $D:=\min_{h\leq H+G}B_h$.
		Since $\min_{h\leq H+G}N_{L_h}(T)\gg T\log T$ and $\sum_{h\leq H+G}N_{L_h}(T)=o(T^{1-\alpha})$, for any $T\gg1$ there is (at least one) interval $\mathcal{I}\subseteq[T,2T]$ which is $\sigma_0$-good for any $L_h$, $h\leq H+G$.
		The theorem will then follow if we can show that there is $t\in [A+T^\alpha, A+2T^\alpha]$ such that
		\begin{align*}
			\sum_{h\leq H}\abs*{{L_h}(\sigma+it)}^{-2}+\sum_{H\leq h\leq H+G}\abs*{{L_{h}}(\sigma+it)}^{2}\leq \exp\parentheses*{-2Z+O(1)}.
		\end{align*}
		To that end, it suffices to prove that
		\begin{align*}
			\int_{A+T^\alpha}^{A+2T^\alpha}&\parentheses*{\sum_{h\leq H}\abs*{{L_h}(\sigma+it)}^{-2}+\sum_{H\leq h\leq H+G}\abs*{{L_{h}}(\sigma+it)}^{2}}|R(it)|^2\mathrm{d}t
			\\
			&\qquad\qquad\qquad\qquad\qquad\qquad\qquad\qquad\leq	\exp\parentheses*{-2Z+O(1)}\int_{A+T^\alpha}^{A+2T^\alpha}|R(it)|^2\mathrm{d}t
		\end{align*}
		or, equivalently, that for every $h\leq H+G$,
		\begin{align}\label{technical1}
			\int_{A+T^\alpha}^{A+2T^\alpha}\abs*{L_h^{(-1)^{\mathrm{j}(h)}}(\sigma+it)}^{2}|R(it)|^2\mathrm{d}t\leq \exp\parentheses*{-2Z+O(1)}	\int_{A+T^\alpha}^{A+2T^\alpha}|R(it)|^2\mathrm{d}t,
		\end{align}
			where $\mathrm{j}(h)=1$ if $h\leq H$ and $\mathrm{j}(h)=0$ if $H<h\leq H+G$.
		By Lemma \ref{M-V} we have that
		\begin{align*}
			\frac{1}{T^\alpha}\int_{A+T^\alpha}^{A+2T^\alpha}|R(it)|^2\mathrm{d}t
			=\parentheses*{1+O\parentheses*{T^{\beta-\alpha+\epsilon}}}\sum_{n\leq N}|r(n)|^2
			\asymp\prod_p\parentheses*{1+|r(p)|^2}.
		\end{align*}
		Therefore, in view of Proposition \ref{technical} we obtain that
		\[
		\frac{\int_{A+T^\alpha}^{A+2T^\alpha}\abs*{L_h^{\mathrm{j}(h)}(\sigma+it)}^{2}|R(it)|^2\mathrm{d}t}{\int_{A+T^\alpha}^{A+2T^\alpha}|R(it)|^2\mathrm{d}t}\ll\prod_p\parentheses*{1+	(-1)^{\mathrm{j}(h)}\frac{\mathrm{Re}\parentheses{\overline{r(p)}a_{L_h}(p)}}{(1+|r(p)|^2)p^\sigma}}.
		\]
		Since $|r(p)a_{L_h}(p)|=o(p^\sigma)$, we see that inequality \eqref{technical1} follows from the above relation and Lemma \ref{computing}.
		This concludes the proof of the theorem.
	\end{proof}
	
	\section{Proof of Theorem \ref{thm:ourresult2}}\label{thirdthm}
	The proof of Theorem \ref{thm:ourresult2} relies on the next two lemmas whose proofs are given in \cite[Corolllary 4.2]{PANKOWSKI2013} and \cite[Theorem 1 (i)]{Chen2000}, respectively.
	For the latter result in particular, it is worth mentioning that it is the culmination of research originating from the trigonometric proof of Bohr and Jessen \cite{Bohr1932} on Kronecker's theorem, which could be regarded as the predecessor of the resonance method.
	\begin{lem}\label{convol}
		Let $L\in\mathcal{S}$ and suppose that, for $t_0\geq15$ and $\tau=\tau(t_0)=O(t_0)$, $L(s)$ does not vanish in $\set{s:\sigma\geq\sigma_0,\,|t-t_0|\leq2\tau}$.
		Then for any $\phi\in\mathbb{R}$ and $x>0$ we have that
		\[
		\max_{|t|\leq\tau}\mathrm{Re}\parentheses{e^{-i\phi}\log L(s_0+it)}\geq\frac{1}{2}\mathrm{Re}\parentheses*{\sum_{\abs*{\log\frac{n}{x}}<1}\frac{b_L(n)}{n^{s_0}}\parentheses*{1-\abs*{\log\frac{n}{x}}}}+O\parentheses*{x\frac{\tau+\log t_0}{\tau^2}}.
		\]
	\end{lem}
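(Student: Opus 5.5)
We convolve $\log L$ against a Fejér-type kernel whose Fourier transform is the triangle function $(1-|u|)_+$, shifted to frequency $\log x$. Set $s_0=\sigma_0+it_0$; implicitly $\mathrm{Re}\,s_0>\sigma_0$, otherwise the contour shift below is not available. Let
\[
K_\tau(y):=\frac{1}{2\pi}\parentheses*{\frac{\sin(y/2)}{y/2}}^2,\qquad \int_\mathbb{R}K_\tau(y)e^{iuy}\,\mathrm{d}y=(1-|u|)_+ .
\]
Since $K_\tau\geq0$ and has total mass $1$,
\[
\max_{|t|\leq\tau}\mathrm{Re}\parentheses{e^{-i\phi}\log L(s_0+it)}\int_{-\tau}^{\tau}K\geq\int_{-\tau}^{\tau}\mathrm{Re}\parentheses{e^{-i\phi}\log L(s_0+iy)}K(y)\parentheses{1+\cos(y\log x-\psi)}\,\mathrm{d}y,
\]
where the factor $1+\cos(\cdot)\in[0,2]$ accounts for the $\tfrac12$ in the statement and $\psi$ is chosen to absorb $\phi$. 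We then expand $\cos$ into exponentials. The term $\int\mathrm{Re}(e^{-i\phi}\log L)K$ cannot simply be dropped, since it need not be nonnegative. Instead we arrange the bookkeeping by splitting into $\max$ times mass, after first subtracting the mean via an auxiliary decomposition; whatever part remains we bound as part of the error.

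The core step is to evaluate
\[
\int_\mathbb{R}\log L(s_0+iy)\,K(y)\,x^{-iy}\,\mathrm{d}y
\]
by shifting the line of integration in $s$ to $\mathrm{Re}\,s=1+1/\log x$ (or larger). On that line we expand $\log L(s)=\sum_n b_L(n)n^{-s}$ and integrate term by term. This gives
\[
\sum_n\frac{b_L(n)}{n^{s_0}}\parentheses*{1-\abs*{\log\frac{n}{x}}}_+,
\]
which is exactly the main sum in the statement. For the shift we extend $K$ analytically: $K(z)$ grows like $e^{|\mathrm{Im}z|}/|z|^2$, so moving $\sigma$ by $O(1)$ costs a factor of size $x^{O(1)}$. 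This is where the $x$ in the error term comes from.

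To keep the contour inside the zero-free box $\sigma\geq\sigma_0$, $|t-t_0|\leq2\tau$, we truncate at $|y|\leq\tau$. In that box, $\log L$ is analytic and $\log L(s)\ll\log t_0$ by a Borel–Carathéodory argument as in Lemma \ref{Lindeloefinstrips}. The truncated tails, including the horizontal segments at height $\pm\tau$, contribute
\[
\ll x\int_{|y|>\tau}\frac{\log t_0+|y|}{y^2}\,\mathrm{d}y\ll x\,\frac{\tau+\log t_0}{\tau^2}.
\]
Here the $|y|$ term covers the growth of $\log L$ beyond the box, where we only use $\log|L|\ll\log(t_0+|y|)$ and the argument bound from the same Borel–Carathéodory step.

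The main obstacle is this truncation and contour step. We must make sure the horizontal connecting segments stay in the zero-free region of width $2\tau$. We must also control $\log L$ uniformly there with only $\log t_0$ growth, which needs the Borel–Carathéodory bound on slightly smaller disks. The analytic continuation of the kernel must be done with the $x^{\pm iy}$ factor accounted for, so that the error is $O(x(\tau+\log t_0)\tau^{-2})$ and not larger.
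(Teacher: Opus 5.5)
Your overall route is the right one. The paper does not prove this lemma; it cites \cite[Corollary 4.2]{PANKOWSKI2013}, which (as the overview around \eqref{convolcommon} indicates) is Montgomery's Fej\'er-kernel convolution argument: the non-negative weight $K(y)\parentheses{1+\cos(y\log x-\psi)}$ followed by a contour shift inside the zero-free box. Your execution, however, has a genuine gap in the error analysis.

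\textbf{The tails.} You cannot integrate over $\abs{y}>\tau$ on the line $\mathrm{Re}\,s=\mathrm{Re}\,s_0$. Outside the box $L$ may vanish on or to the right of that line, so $\log L$ is not analytic there and no bound $\log L\ll\log(t_0+\abs{y})$ holds. Your displayed tail $x\int_{\abs{y}>\tau}(\log t_0+\abs{y})y^{-2}\,\mathrm{d}y$ also diverges. The correct bookkeeping is as follows. Integrate $\log L(s_0+w)K(-iw)x^{w}$ (so $w=iy$ on the original segment) around the rectangle with vertices $\pm i\tau$, $1\pm i\tau$.
On the right side, $\mathrm{Re}\,s=\mathrm{Re}\,s_0+1>3/2$, you have $\log L\ll1$ and $\abs{K(y-i)x^{1+iy}}\ll xy^{-2}$. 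Completing that side to the whole line therefore costs $O(x/\tau)$. Integrating the Dirichlet series term by term, and shifting each entire term back to $\mathrm{Re}\,w=0$, then gives $\sum_n b_L(n)n^{-s_0}(1-\abs{\log(n/x)})_+$.
The horizontal sides cost $O(x\log t_0/\tau^2)$. This is where $x(\tau+\log t_0)/\tau^2$ comes from. Shift by exactly $1$, not ``or larger'', otherwise the factor becomes $x^{c}$ with $c>1$.

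\textbf{The mean term.} The term $\int_{-\tau}^{\tau}\mathrm{Re}\parentheses{e^{-i\phi}\log L(s_0+iy)}K(y)\,\mathrm{d}y$ is not disposed of by any ``auxiliary decomposition''. It is the case $x=1$ of the same contour computation, and equals $(1-\log 2)\mathrm{Re}\parentheses{e^{-i\phi}b_L(2)2^{-s_0}}+O\parentheses{(\tau+\log t_0)/\tau^2}$. This $O(1)$ must be carried explicitly; it is dominated by the stated error when $x\log t_0\gg\tau^2$, as in \eqref{Mont}.

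You also need $x\geq e$. Then the conjugate-frequency contribution, weighted by $(1-\log(nx))_+$, vanishes, and $\int_{-\tau}^{\tau}K(y)(1+\cos(y\log x-\psi))\,\mathrm{d}y=1+O(1/\tau)$. Your first inequality should have this integral on the left, not $\int K$. The $\frac12$ comes from $\cos\theta=\frac12(e^{i\theta}+e^{-i\theta})$, not from $1+\cos\in[0,2]$.

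\textbf{The boundary case.} Assuming $\mathrm{Re}\,s_0>\sigma_0$ weakens the statement: the paper applies the lemma with $\mathrm{Re}\,s_0=\sigma_0$ (note the $p^{\sigma_0}$ in \eqref{Mont}). The contour shift is available there, since the closed box is zero-free and so $\log L$ is analytic on a neighbourhood of it. What fails is your claim that Borel--Carath\'eodory gives $\log L\ll\log t_0$ throughout the box. A zero just left of $\sigma_0$ makes $\log\abs{L}$ arbitrarily negative near the left edge, and Borel--Carath\'eodory only gives $\ll\delta^{-1}\log t_0$ at distance $\delta$ from that edge.

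On the horizontal sides, replace the pointwise bound by an integrated one. The formula $\log L(s)=\sum_{\abs{\gamma-t}\leq1}\log(s-\rho)+O(\log t)$ has $O(\log t_0)$ summands, each with an integrable logarithmic singularity. Hence $\int_{0}^{1}\abs{\log L(s_0+u\pm i\tau)}\,\mathrm{d}u\ll\log t_0$, which is all the contour estimate needs.
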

	
	\begin{lem}\label{Chen}
		Let $\lambda_1,\dots,\lambda_N$ be real numbers linearly independent over $\mathbb{Q}$.
		Then for all real numbers $\eta_1,\dots,\eta_N$ and for all positive real numbers $\xi_1,\dots,\xi_N$ and $T_1<T_2$, we have that
		\[
		\inf_{t\in[T_1,T_2]}\sum_{n\leq N}\xi_n\norm{\lambda_n t-\eta_n}^2\leq\frac{1}{4}\sum_{n\leq N}\xi_j\parentheses*{\sin^2\parentheses*{\frac{\pi}{2(M+1)}}+\frac{M^N}{\pi(T_2-T_1)\Lambda}},
		\]
		where
		\[
	\Lambda:=\min\set*{\abs*{\sum\nolimits_{n\leq N}u_n\lambda_n}\neq0:u_n\in\mathbb{Z},\,|u_n|\leq M}.
		\]
	\end{lem}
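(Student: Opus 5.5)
The plan is to compare the infimum with a weighted time average, in the spirit of the Bohr--Jessen trigonometric proof of Kronecker's theorem, using a nonnegative trigonometric weight adapted to the frequencies $\lambda_n$.

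First I would reduce to trigonometric quantities. Since $\sin(\pi\norm{y})\geq 2\norm{y}$ for $\norm{y}\le 1/2$, we have
\[
\norm{y}^2\leq\tfrac14\sin^2(\pi y)=\tfrac18\parentheses*{1-\cos(2\pi y)}.
\]
So it suffices to show that
\[
\Sigma(t):=\sum_{n\le N}\xi_n\sin^2\parentheses{\pi(\lambda_nt-\eta_n)}
\]
has infimum on $[T_1,T_2]$ at most $\sum_n\xi_n\bigl(\sin^2(\frac{\pi}{2(M+1)})+\frac{M^N}{\pi(T_2-T_1)\Lambda}\bigr)$.

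Next I would build the weight. Let $K(y)=\abs{\sum_{k=0}^{M-1}c_ke(ky)}^2=\sum_{|j|<M}\gamma_je(jy)\ge0$ with real $c_k$ normalised so that $\gamma_0=\sum c_k^2=1$. By the Fejér--Egerváry--Szász extremal problem, the choice $c_k\propto\sin(\pi(k+1)/(M+1))$ gives $2\gamma_1=\cos(\pi/(M+1))$. Hence $\int_0^1K(y)\sin^2(\pi y)\,\mathrm{d}y=\tfrac12(1-\cos\frac{\pi}{M+1})=\sin^2\frac{\pi}{2(M+1)}$. I also record $|\gamma_j|\le1$ for all $j$. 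Then set
\[
W(t):=\prod_{n\le N}K(\lambda_nt-\eta_n)\ge0,\qquad \int_{T_1}^{T_2}\Sigma W\,\mathrm{d}t\ \ge\ \inf_{[T_1,T_2]}\Sigma\cdot\int_{T_1}^{T_2}W\,\mathrm{d}t .
\]

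I would then expand both integrals into exponentials $e\bigl(t\sum_n u_n\lambda_n-\sum_nu_n\eta_n\bigr)$ with $|u_n|\le M$. The shift by $\pm1$ coming from $\sin^2$ is why the frequency bound is $M$ rather than $M-1$. By linear independence over $\mathbb{Q}$, a frequency vanishes only when all $u_n=0$; otherwise it is at least $\Lambda$ in absolute value. Each nonzero-frequency term integrates to at most $1/(\pi\Lambda)$ in absolute value. The zero-frequency terms give $\int W=(T_2-T_1)(1+\text{error})$ and $\int\xi_n\sin^2(\cdot)W=(T_2-T_1)\xi_n\sin^2\frac{\pi}{2(M+1)}+\text{error}$. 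Dividing the second by the first yields the claimed bound, with the oscillatory contributions producing the term $\frac{M^N}{\pi(T_2-T_1)\Lambda}$ per $\xi_n$.

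The main obstacle is the bookkeeping needed to reach exactly the constant $M^N$ rather than something like $(2M+1)^N$. For this I would choose $K$ with coefficients bounded so that $\sum_j|\gamma_j|$ summed over the admissible multi-indices is at most $M^N$. I would also control the denominator $\int W$ from below, either by absorbing its error into the numerator estimate or by using a normalisation under which $\int W\ge T_2-T_1$ up to the same error. If the constant comes out slightly different, it is harmless for the application in Theorem \ref{thm:ourresult2}; that is the fallback if exact matching fails.
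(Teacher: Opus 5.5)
The paper does not prove this lemma; it quotes Chen's Theorem 1(i). Your Bohr--Jessen-type averaging with the Fej\'er--Egerv\'ary--Sz\'asz kernel is the natural route to it. The reduction $\|y\|^2\le\frac14\sin^2(\pi y)$, the kernel and the frequency count $|u_n|\le M$ are all fine. One slip: the extremal value is $\gamma_1=\cos\frac{\pi}{M+1}$, not $2\gamma_1$; otherwise $\frac12(1-\gamma_1)$ would not equal $\sin^2\frac{\pi}{2(M+1)}$.

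However, the step you call the main obstacle is not carried out, and what you propose for it does not give the stated constants. As written, the proposal proves only a weaker inequality.

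First, $|\gamma_j|\le 1$ only gives $\sum_{|j|<M}|\gamma_j|\le 2M-1$, hence $(2M-1)^N$ or worse. The missing observation is that your $c_k\propto\sin(\pi(k+1)/(M+1))$ are positive. Hence every $\gamma_j\ge 0$, and by Cauchy--Schwarz
\[
\sum_j|\gamma_j|=K(0)=\Bigl(\sum_{k<M}c_k\Bigr)^2\le M\sum_{k<M}c_k^2=M .
\]
Second, dividing by $\int_{T_1}^{T_2}W$ costs a factor. Write $s=\sin^2\frac{\pi}{2(M+1)}$, $\epsilon=\frac{M^N}{\pi(T_2-T_1)\Lambda}$ and $\Xi=\sum_n\xi_n$. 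Even after subtracting the mean, the estimates available to you give only about $\inf\Sigma\le\Xi\bigl(s+\frac{(1-s)\epsilon}{1-\epsilon}\bigr)$, which exceeds $\Xi(s+\epsilon)$ whenever $\epsilon>s$.

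Your first option, absorbing the denominator's error into the numerator, is the right one. It has to be done by testing the target constant directly, with $(\Sigma-a\Xi)W$ treated as a single trigonometric sum. Put $a=s+\epsilon$. If $a>1$ the claim is trivial since $\|\cdot\|^2\le\frac14$, so assume $a\le 1$.

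For each $n$, the function $h(y):=(\sin^2(\pi y)-a)K(y)$ has Fourier coefficients $(\frac12-a)\gamma_j-\frac14(\gamma_{j-1}+\gamma_{j+1})$. Its mean is $\frac12(1-\gamma_1)-a=-\epsilon$. Its $\ell^1$-norm is at most $(|\frac12-a|+\frac12)K(0)\le M$, using $\gamma_j\ge0$ and $0<a\le1$.

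Now expand $(\Sigma-a\Xi)W=\sum_n\xi_n\,h(\lambda_nt-\eta_n)\prod_{m\neq n}K(\lambda_mt-\eta_m)$. Bound each nonzero frequency by $\bigl|\int_{T_1}^{T_2}e(\omega t)\,\mathrm{d}t\bigr|\le\frac{1}{\pi\Lambda}$; the nonzero-frequency coefficients have total mass at most $M\cdot M^{N-1}-\epsilon$. This gives
\[
\int_{T_1}^{T_2}\bigl(\Sigma(t)-a\Xi\bigr)W(t)\,\mathrm{d}t\le\Xi\Bigl(-\epsilon(T_2-T_1)+\frac{M^N-\epsilon}{\pi\Lambda}\Bigr)=-\frac{\Xi\,\epsilon}{\pi\Lambda}<0 .
\]
This is impossible if $\Sigma\ge a\Xi$ on $[T_1,T_2]$, because $W\ge0$. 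So $\Sigma(t)<\Xi(s+\epsilon)$ for some $t$, and your first reduction then yields the lemma with exactly the stated constants.

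Your remark that a worse constant would be harmless for Theorem~\ref{thm:ourresult2} is correct. It does not, however, prove the lemma as stated.
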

	\begin{proof}[Proof of Theorem \ref{thm:ourresult2}]
		By assumption, for any $T\gg1$ there is (at least one) interval $\mathcal{I}\subseteq[T,2T]$ which is $\sigma_0$-good for all $L_h(s)$, $h\leq H$.
		Hence, if we set $\tau:=(\log T)^{(1+\sigma_0)/2}\sqrt{\log\log T}$ and $x_h:={e^{2h}(\log T)}/{(CM)}$, $h\leq H$, where $C,M$ will be determined later on and any subsequent implicit constants are independent of $M$, then Lemma \ref{convol} yields that
		\begin{align}\label{Mont}
			\begin{split}
				\max_{|t|\leq\tau}&\mathrm{Re}\parentheses{e^{-i\phi_h}\log L_h(s_0+it)}\\
				&\geq\frac{1}{2}\sum_{\abs*{\log\frac{p}{x_h}}<1}\frac{|a_{L_h}(p)|\cos(\arg(a_{L_h}(p))-t_0\log p)}{p^{\sigma_0}}\parentheses*{1-\abs*{\log\frac{p}{x_h}}}\\
				&\quad+O\parentheses*{\sum_{\ell\geq2}\sum_{\frac{x_h}{e}<p^\ell<ex_h}\frac{|b_L(p^\ell)|}{p^{\ell\sigma}}+\frac{(\log T)^{1-\sigma_0}}{M\log\log T}}
			\end{split}
		\end{align}
		for any $t_0\in[A+T^\alpha,A+2T^\alpha]$.
		Moreover, if $\theta:=\max\{\theta_{L_h}:h\leq H\}$, then the double sum in the error term can be seen \cite[Footnote 1]{Aistleitner2017} to be bounded by $x_h^{2\theta-2\sigma+1/2}\log x=o\parentheses*{(\log T)^{1-\sigma_0}/\log\log T}$.
		
		Let now
		\[
		\set{\lambda_1,\dots,\lambda_N}:=\bigsqcup_{h\leq H}\set*{\frac{\log p}{2\pi}:\frac{x_h}{e}<p<ex_h},
		\]
		where we note that the sets on the right-hand side are disjoint from our choices of $x_h$.
		This implies that the numbers $\lambda_n$, $n\leq N$, are linearly independent over $\mathbb{Q}$.
		Hence, if we set $\eta_n=\arg(a_{L_h}(p))/(2\pi)$ and $\xi_n=|a_{L_h}(p)|/{p^{\sigma_0}}$ whenever $\lambda_n=(\log p)/(2\pi)$, then Lemma \ref{Chen} yields the existence of a $t_0\in[A+T^\alpha,A+2T^\alpha]$ such that
		\begin{align*}
			\mathscr{N}(t_0)
			&:=\sum_{h\leq H}\sum_{\abs*{\log\frac{p}{x_h}}<1}\frac{|a_{L_h}(p)|}{p^{\sigma_0}}\norm*{\frac{\arg(a_{L_h}(p)-t_0\log p)}{2\pi}}^2\\
			&\ll\sum_{h\leq H}\sum_{\abs*{\log\frac{p}{x_h}}<1}\frac{|a_{L_h}(p)|}{p^{\sigma_0}}\parentheses*{\frac{1}{M^2}+\frac{M^N}{T^\alpha\Lambda}},
		\end{align*}
		where 
		\[
		\Lambda=\min\set*{\abs*{\sum\nolimits_{\frac{x_1}{e}<p<ex_H}u_p{\log p}}\neq0:u_p\in\mathbb{Z},\,|u_p|\leq M}.
		\]
		The above quantity can be seen \cite[\S8.8, Lemma $\beta$]{Titchmarsh1986} to satisfy
		$\Lambda>(ex_H)^{-MN}$.
		Taking also into account the prime number theorem and that $N<\pi(ex_H)$, we obtain that
		\[
		\mathscr{N}(t_0)\ll\sum_{h\leq H}\sum_{\abs*{\log\frac{p}{x_h}}<1}\frac{|a_{L_h}(p)|}{p^{\sigma_0}}\parentheses*{\frac{1}{M^2}+\exp\parentheses*{\frac{2ex_H\log M}{\log(ex_H)}+2Mex_H-\alpha\log T}}.
		\]
		Thus, if we fix $C=C(\alpha)>0$ sufficiently large, then for any $M\geq1$ and $T\gg1$
		\[
		\mathscr{N}(t_0)\ll\frac{1}{M^2}\sum_{h\leq H}\sum_{\abs*{\log\frac{p}{x_h}}<1}\frac{|a_{L_h}(p)|}{p^{\sigma_0}}.
		\]
		
		Finally, assume that $\mathrm{i}(h)=1$ for given $h\leq H$. 
		Then assumption \eqref{PNT1} implies that
		\[
		\sum_{\abs*{\log\frac{p}{x_h}}<1}\frac{|a_{L_h}(p)|}{p^{\sigma_0}}\asymp\frac{(\log T)^{1-\sigma_0}}{M^{1-\sigma_0}\log\log T}\asymp\sum_{\abs*{\log\frac{p}{x_h}}<\frac{1}{2}}\frac{|a_{L_h}(p)|}{p^{\sigma_0}}.
		\]
		If on the other hand $\mathrm{i}(h)=2$, then $L_h\in\mathcal{S}_{\text{poly}}$ and satisfies the Ramanujan Hypothesis.
		Hence, $|a_{L_h}(p)|^2/\partial_{L_h}\leq|a_{L_h}(p)|\leq\partial_{L_h}$ which in combination with \eqref{PNT1} implies that
		\[
		\sum_{\abs*{\log\frac{p}{x_h}}<1}\frac{|a_{L_h}(p)|}{p^{\sigma_0}}\ll\frac{(\log T)^{1-\sigma_0}}{M^{1-\sigma_0}\log\log T}\asymp\sum_{\abs*{\log\frac{p}{x_h}}<\frac{1}{2}}\frac{|a_{L_h}(p)|^2}{p^{\sigma_0}}\ll\sum_{\abs*{\log\frac{p}{x_h}}<\frac{1}{2}}\frac{|a_{L_h}(p)|}{p^{\sigma_0}}.
		\]
		Thus, the above bounds, relation \eqref{Mont} and the inequality $1-2\pi^2\norm{x}^2\leq\cos(2\pi x)$, $x\in\mathbb{R}$, yield that, for any $h\leq H$,
		\begin{align*}
			\max_{|t|\leq\tau}\mathrm{Re}\parentheses{e^{-i\phi_h}\log L_h(s_0+it)}
			&\geq\frac{1}{4}\sum_{\abs*{\log\frac{p}{x_h}}<\frac{1}{2}}\frac{|a_{L_h}(p)|}{p^{\sigma_0}}-\pi^2\mathscr{N}(t_0)+O\parentheses*{\frac{(\log T)^{1-\sigma_0}}{M\log\log T}}\\
			&\gg\parentheses*{1-\frac{1}{M^2}}\frac{(\log T)^{1-\sigma_0}}{M^{1-\sigma_0}\log\log T}+O\parentheses*{\frac{(\log T)^{1-\sigma_0}}{M\log\log T}}.
		\end{align*}
		By fixing $M\gg1$ and taking any sufficiently large  $T\gg_M1$ we can prove now the existence of the desired $t_h$, $h\leq H$, which concludes the proof of the theorem.
	\end{proof}
	\section{Concluding Remarks}\label{concluding}
	\subsection{On the critical line}
As it was mentioned in the introduction, Theorem \ref{thm:ourresult0} is an unconditional version of \eqref{motibvation1} for $\mathrm{GL}(1)$ and $\mathrm{GL}(2)$ $L$-functions when $H>3$ with a lower bound smaller by a power of $1/\sqrt{H-1}$.
This restriction can be traced back at first in Proposition \ref{keyprop} where we require $\lambda>v$ when $q=u/v$.
Since $q$ can be at most $2/(H-1)$ for proving the theorem, $\lambda$ has to be at least $H-1$.
The source of the restriction $\lambda>v$ in Proposition \ref{keyprop} can be found in Lemma \ref{lowerboundapprox2} and it has to do, rather naturally, with the possible existence of nontrivial zeros of the considered $L$-functions.
Lemma  \ref{lowerboundapprox2} provides basically the means to control the average difference between $L_\pi(s)^{q}$ and its short Dirichlet polynomial approximation $s_\pi(s,T)$.
Ideally, we would like to apply Gabriel's convexity theorem (Lemma \ref{Gabriel0}) directly to the function $L_\pi(s)^q-s_\pi(s,T)$ with $y=1$ and translate the problem into the half-plane of absolute convergence.
Unconditionally, however, we do not know whether $L_\pi(s)^q$, $q\notin\mathbb{N}$, is analytic in the half-plane $\sigma>1/2$.
Heath-Brown's idea was to instead apply Gabriel's convexity theorem to $L_\pi(s)^u-s_\pi(s,T)^v$ with $y=1/v$ which suffices for the subsequent computations. 
It is exactly here where $s_\pi(s,T)^v$, though still a good approximation of $L_\pi(s)^u$, does not ``delete'' enough terms from $L_{\pi}(s)^u$ when subtracted.
That is, the coefficients $C_i(n)$, $n\geq1$, as defined above \eqref{D3} should preferably be vanishing for $n\leq X^v$ rather than $n\leq X$.

So let us assume the GRH for all $L_h$, $h\leq H$, and  that none of the $L$-functions has a pole at $s=1$.
In a different case we would have to work a little harder and additionally employ another convexity theorem of Gabriel \cite[Theorem 1]{Gabriel1927} to overcome the possible poles while moving to the half-plane of absolute convergence.
Let 
$
\tilde{d}_\pi(s,T):=L_\pi(s)^{q}-s_\pi(s,T)$ and 
\[ 
\tilde{D}_\pi(\sigma,T):=\frac{1}{T}\int_{\mathbb{R}}\abs*{\tilde{d}_\pi(\sigma+it,T)}\abs*{R(\sigma+it,T)}^2w(t,T)\mathrm{d}t.
\]
For the sake of the succeeding discussion, we also set the length of $s_\pi(\sigma,T)$ to be slightly longer then the one of the resonator, $T^{\tilde{\Delta}}$ say for some $\Delta<\tilde{\Delta}<1/2$.
Applying Lemma \ref{Gabriel0} with $y=1$ to the function $f(s)=\tilde{d}_\pi(s,T)R(s)^{2}\exp\parentheses*{(s-i\tau)^2}$, which satisfies the necessary requirements in the strip $1/2\leq\sigma\leq\beta$,  and arguing exactly as in Lemma \ref{lowerboundapprox2} will result to
	\begin{align}\label{conditional}
	\tilde{D}_\pi(\sigma,T)
	&\ll \tilde{D}_\pi({\textstyle{\frac{1}{2}}},T)^{1+\frac{1/2-\sigma}{\beta-1/2}}T^{(1/2-\sigma)\parentheses*{1-1/\beta}\tilde{\Delta}}+e^{-\epsilon T^{2}}.
\end{align}
Now there is no factor $1/v$ in the power to which $T$ is raised in the right-hand side above.
If we were to use $\tilde{D}_\pi(\sigma,T)$ instead of ${D}_\pi(\sigma,T)$  in the proof of Proposition \ref{keyprop}	and employ \eqref{conditional} instead of Lemma \ref{lowerboundapprox2}, we could relax the restriction $\lambda>v$ to $\lambda>1$ (see in particular \eqref{Heath-Brown} where the factor $1/v$ will be replaced by $1$).
Thus, under the GRH it is possible to increase in Theorem \ref{thm:ourresult0} the power $1/\sqrt{H-1}$ to $1$.
We can pursue this direction further since the above seemingly small detail can aid us to retrieve  \eqref{motibvation1} for an arbitrary number of $\mathrm{GL}(m)$ $L$-functions.
Based on how we argued to prove Theorem \ref{thm:ourresult0}, it suffices to prove that the inequality stated in Proposition \ref{keyprop} can be reversed.
Once more we can modify Heath-Brown's method on upper bounds of fractional moments of $\zeta(s)$ to conditionally obtain the desired inequality whenever $\Delta<2\tilde{\Delta}$ which would result to the more restricted range  $D<1/4$ compared to $D<1/2$ of \eqref{motibvation1} (\cite[Theorem 3]{Heap2024}).
	\subsection{Off the critical line}
There is a rich literature on zero-density estimates since they are the closest we have unconditionally in place of the GRH and often yield results of similar strength.
Their quality can differ depending  on which $L$-function we consider and how far we work from the critical line.
Theorem \ref{thm:ourresult1} and Theorem \ref{thm:ourresult2} require the bare minimum from these estimates, namely to grow essentially by $o(T)$.
At the same time, their speed of growth determines the width of the horizontal strips where we are certain that there are no zeros and in consequence they also determine the size of $D$ in the respective theorems.
	Generally, it is known that any $\mathrm{GL}(1)$ and $\mathrm{GL}(2)$ $L$-function satisfies
	\[
	N_{L}(\sigma, T)\ll T^{1-\alpha(\sigma-1/2)}\log T,\quad\sigma\geq\frac{1}{2},\, T\geq1
	\]
	for some $\alpha>0$.
It is unknown whether higher degree primitive $L$-functions satisfy such zero-density estimates or any other that is $o(T)$ close to the critical line.
Kaczorowski and Perelli \cite{Kaczorowski2003} showed that for if $L\in\mathcal{S}$ satisfies the Ramanujan Hypothesis, then
\[
N_{L}(\sigma, T)\ll T^{(4d_L+12)(1-\sigma)+\epsilon},\quad\sigma\geq\frac{1}{2},\, T\geq1.
\]
Therefore, Theorem \ref{thm:ourresult1} and Theorem \ref{thm:ourresult2} are applicable in the range $\sigma>1-1/(4d_L+12)$.
It is also not very difficult to increase the range to $\sigma>1-1/d_L$ by an appeal to a second moment estimate of $L(s)$ on the critical line, which unconditionally can be shown to be $O(T^{d_L/2+\epsilon})$, and classic results of Hal\'asz and Montgomery (see for example \cite{MUKHOPADHYAY2007}) and \cite{Pintz2024}).
	\appendix
	\section{List of estimates}
	\begin{namedthm*}{Lemma A}\label{resonator properties}
		Let $L_1,\dots, L_H,R$ be as in Section \ref{firsttheorem}, $\eta:=(\log\mathscr{L})^{-3}$ and $\sigma\in[1/2-\eta,1/2+\eta]$.
		We then have that
		\begin{enumerate}[label=(A.\arabic*), wide, labelindent=0pt]
			\item\label{resonator properties(i)} \[r(p)=o(1)=\frac{|a_{L_h}(p)||r(p)|}{{p}^{2\sigma-1/2}}\quad\text{uniformly for }p\in\mathcal{P}\text{ as }T\to+\infty,
			\]
			\item\label{resonator properties(ii)} 
			\[
			\sum_{p}|r(p)|^2=\parentheses*{\frac{1}{2\lambda}+o(1)}\frac{\log X}{\log\log X},
			\]
			\item \label{resonator properties(iii)}\[
			\sum_{p}\frac{\mathrm{Re}\parentheses{\overline{r(p)}a_{L_h}(p)}}{\sqrt{p}(1+|r(p)|^2)}=\parentheses*{{\kappa_{h}}+o(1)}\sqrt{\frac{\log X}{{\kappa_L\lambda}\log\log X}},
			\]
			\item\label{resonator properties(iv)}\[\sum_p\frac{|a_{L_h}(p)||r(p)|}{\sqrt{p}}\ll\sqrt{\frac{\log X\log\log\log X}{\log\log X}},
			\]
			\item\label{resonator properties(v)}
			\begin{align*}
				\sum_p|r(p)|^2\parentheses*{1-p^{1-2\sigma}}=(2\sigma-1)\parentheses*{\frac{1}{\lambda}+o(1)}\log X+O\parentheses*{(2\sigma-1)^2\log X(\log\log X)^2},
			\end{align*}
			\item\label{resonator properties(vi)}
			\begin{align*}
				\sum_{p}\frac{\mathrm{Re}\parentheses{\overline{r(p)}a_{L_h}(p)}}{\sqrt{p}(1+|r(p)|^2)}-\sum_{p}\frac{\mathrm{Re}\parentheses{\overline{r(p)}a_{L_h}(p)}}{p^{2\sigma-1/2}\parentheses*{1+\frac{|r(p)|^2}{p^{2\sigma-1}}}}\ll(2\sigma-1)\sqrt{\log X}(\log\log X)^2.
			\end{align*}
		\end{enumerate}
	\end{namedthm*}
	\begin{proof}
			\ref{resonator properties(i)} follows directly from the construction of $r(n)$.
			
			To prove \ref{resonator properties(ii)} we write
			$
			\sum_{p}|r(p)|^2=M-E,
			$
			where
			\begin{align}\label{Maintermresonator}
				M:=\mathscr{L}^2\sum_{\mathscr{L}^2\leq p\leq\exp((\log \mathscr{L})^2)}\frac{|a_L(p)|^2}{p(\log p)^2}=\parentheses*{\frac{1}{2\lambda}+o(1)}\frac{\log X}{\log\log X}\tag{$\star$},
			\end{align}
			as follows from \eqref{PNTP} and partial summation, and
			\begin{align*}
				E
				&:=\mathscr{L}^2\sum_{\substack{\mathscr{L}^2\leq p\leq\exp((\log \mathscr{L})^2)\\
					\exists h\leq H:\,|a_{L_h}(p)|>(\log p)^{1-\epsilon}}}\frac{|a_L(p)|^2}{p(\log p)^2}\\
				&\ll\mathscr{L}^2\sum_{g,h\leq H}\sum_{\substack{\mathscr{L}^2\leq p\leq\exp((\log \mathscr{L})^2)}}\frac{|a_{L_g}(p)|^2|a_{L_h}(p)|}{p(\log p)^{3-\epsilon}}\\
				&\ll\mathscr{L}^2\sum_{g,h\leq H}\parentheses*{\sum_{p\geq \mathscr{L}^2}\frac{|a_{L_h}(p)|^2}{p(\log p)^{6-2\epsilon}}}^{1/2}\parentheses*{\sum_{p\leq \exp((\log \mathscr{L})^2)}\frac{|a_{L_g}(p)|^4}{p}}^{1/2}.
			\end{align*}
			In view of  \eqref{PNT} and \eqref{fourthmoment} we deduce that 
			$
			E\ll\log X\parentheses*{\log\log X}^{-2+2\epsilon}
			$, which in combination with \eqref{Maintermresonator} yield the desired asymptotics for $\sum_p|r(p)|^2$.
			
			For \ref{resonator properties(iii)} we have that
			\begin{align}\label{twisted}
				\begin{split}
					\sum_{p}\frac{\mathrm{Re}\parentheses{\overline{r(p)}a_{L_h}(p)}}{\sqrt{p}(1+|r(p)|^2)}
					&=\sum_{p}\frac{\mathrm{Re}\parentheses{\overline{r(p)}a_{L_h}(p)}}{\sqrt{p}}+O\parentheses*{\sum_p\frac{|r(p)|^3|a_{L_h}(p)|}{\sqrt{p}}}\\
					&=M_1+E_1-E_2+O\parentheses*{\sum_{p\in\mathcal{P}}\frac{\mathscr{L^3}|a_L(p)||a_{L_h}(p)|}{p^2(\log p)^{1+2\epsilon}}},
				\end{split}
			\tag{$\star\star$}
			\end{align}
			where we employ  \eqref{PNT}, \eqref{fourthmoment} and H\"older's inequality for $E_2$, to show that
			\begin{align*}
				M_1&:=\mathscr{L}\sum_{\mathscr{L}^2\leq p\leq\exp((\log \mathscr{L})^2)}\frac{|a_{L_h}(p)|^2}{p\log p}=\parentheses*{{\kappa_{h}}+o(1)}\sqrt{\frac{\log X}{\kappa_L\lambda\log\log X}},
				\\
				E_1&:=\mathscr{L}\sum_{\substack{g\leq H\\g\neq h}}\sum_{\mathscr{L}^2\leq p\leq\exp((\log \mathscr{L})^2)}\frac{\mathrm{Re}\parentheses*{\overline{a_{L_g}(p)}a_{L_h}(p)}}{p\log p}\ll\frac{\sqrt{\log X}}{(\log\log X)^{3/2}},
				\end{align*}
				and\begin{align*}
				E_2&:=\mathscr{L}\sum_{g\leq H}\sum_{\substack{\mathscr{L}^2\leq p\leq\exp((\log \mathscr{L})^2)\\
						\exists j\leq H:\,|a_{L_j}(p)|>(\log p)^{1-\epsilon}}}\frac{\mathrm{Re}\parentheses{\overline{a_{L_g}(p)}a_{L_h}(p)}}{{p}\log p}\\
				&\ll \mathscr{L}\sum_{g,h,j\leq H}\sum_{\substack{\mathscr{L}^2\leq p\leq\exp((\log \mathscr{L})^2)}}\frac{|a_{L_g}(p)||a_{L_h}(p)||a_{L_j}(p)|}{p(\log p)^{2-\epsilon}}\\
				&\ll \mathscr{L}\parentheses*{\sum_{g\leq H}\sum_{p\leq \exp((\log \mathscr{L})^2)}\frac{|a_{L_g}(p)|^4}{p}}^{3/4}\parentheses*{\sum_{p\geq\mathscr{L}^2}\frac{1}{p(\log p)^{8-4\epsilon}}}^{1/4}\\
				&\ll \frac{\sqrt{\log X}}{(\log\log X)^{3/2-2\epsilon}}.
			\end{align*}
			Lastly, by an application of the Cauchy-Schwarz inequality and \eqref{PNT}, the error term in \eqref{twisted} can be seen to be at most
			\[
			\mathscr{L^3}	\sum_{g\leq H}\sum_{p\geq\mathscr{L}^2}\frac{|a_{L_g}(p)|^2}{p^2\log p}\ll\frac{\sqrt{\log X}}{(\log\log X)^{3/2}}.
			\]
			Gathering all of the above estimates, we derive \ref{resonator properties(iii)}.
			
			\ref{resonator properties(iv)} follows from the Cauchy-Schwarz inequality and relations \eqref{PNT} and \ref{resonator properties(ii)}. 
			Indeed,
			\[
			\sum_p\frac{|a_{L_h}(p)||r(p)|}{\sqrt{p}}\ll\parentheses*{\sum_{p\leq\exp((\log \mathscr{L})^2)}\frac{|a_{L_h}(p)|^2}{p}}^{1/2}\parentheses*{\sum_{p}|r(p)|^2}^{1/2}\ll\sqrt{\frac{\log X\log\log \log X}{\log\log X}}.
			\]
			
			To prove \ref{resonator properties(v)} we observe first that $p^{1-2\sigma}=1+(1-2\sigma)\log p+O\parentheses*{(2\sigma-1)^2(\log p)^2}$, which holds uniformly for $p\in\mathcal{P}$ and $\sigma\in[1/2-\sigma, 1/2+\eta]$ as $T\to+\infty$.
			Therefore,
				\begin{align*}
				\sum_p|r(p)|^2\parentheses*{1-p^{1-2\sigma}}=(2\sigma-1)\sum_{p}|r(p)|^2\log p+O\parentheses*{(2\sigma-1)^2\sum_{p}|r(p)|^2(\log p)^2}
			\end{align*}
			Estimating the sums in the right-hand side as we did for \ref{resonator properties(ii)} yields \ref{resonator properties(v)}.
			
		 For \ref{resonator properties(vi)} we make use again of the formula $p^{1-2\sigma}=1+O((2\sigma-1)\log p)=O(1)$ and \ref{resonator properties(i)} to derive that
			\begin{align*}
				\sum_{p}\frac{\mathrm{Re}\parentheses{\overline{r(p)}a_{L_h}(p)}}{p^{2\sigma-1/2}\parentheses*{1+\frac{|r(p)|^2}{p^{2\sigma-1}}}}
				&=\sum_{p}\frac{\mathrm{Re}\parentheses{\overline{r(p)}a_{L_h}(p)}}{p^{2\sigma-1/2}(1+|r(p)|^2)\parentheses*{1+\frac{|r(p)|^2\parentheses{p^{1-2\sigma}-1}}{1+|r(p)|^2}}}\\
				&=\sum_{p}\frac{\mathrm{Re}\parentheses{\overline{r(p)}a_{L_h}(p)}}{p^{2\sigma-1/2}(1+|r(p)|^2)}+O\parentheses*{(2\sigma-1)\sum_{p}\frac{|r(p)|^3|a_{L_h}(p)|\log p}{\sqrt{p}}},
			\end{align*}
			where the error term is similar to the one in \eqref{twisted} and. hence, it can be estimated in the same way to be at most $\sqrt{\log X/\log\log X}$.
			Moreover,
			\[
			\sum_{p}\frac{\mathrm{Re}\parentheses{\overline{r(p)}a_{L_h}(p)}}{p^{2\sigma-1/2}(1+|r(p)|^2)}=\sum_{p}\frac{\mathrm{Re}\parentheses{\overline{r(p)}a_{L_h}(p)}}{{\sqrt{p}}(1+|r(p)|^2)}+O\parentheses*{(2\sigma-1)\sum_{p}\frac{|r(p)||a_{L_h}(p)|\log p}{\sqrt{p}}},
			\]
			where the sum in the error term can be seen to be $\ll\sqrt{\log X}(\log\log X)^2$ by arguing similarly to how we did in \ref{resonator properties(iv)}. 
	\end{proof}
	\bibliographystyle{00-plainnat}
	\bibliography{joint-extreeme}
\end{document}